\newtheorem{theorem}{Theorem}[section]
\newtheorem{lemma}[theorem]{Lemma}
\newtheorem{proposition}[theorem]{Proposition}
\newtheorem{corollary}[theorem]{Corollary}
\theoremstyle{definition}
\theoremstyle{remark}
\newtheorem{remark}[theorem]{Remark}
\numberwithin{equation}{section}
\newcommand{\be}{\begin{equation}}
\newcommand{\ee}{\end{equation}}
\newcommand{\cM}{{\mathcal M}}
\newcommand{\NN}{\mathbb{N}}
\begin{document}
\setcounter{page}{1}

\title[Essential joint and essential generalized spectral radius]{Inequalities on the essential joint and essential generalized spectral radius}

\author{ Brian Lins$^{1}$, Aljo\v{s}a Peperko$^{2,3*}$}
\date{\thanks{
Hampsdey-Sydney College$^1$,
Virginia, USA,
email: lins.brian@gmail.com
  \\
Faculty of Mechanical Engineering$^2$,
University of Ljubljana,
A\v{s}ker\v{c}eva 6,
SI-1000 Ljubljana, Slovenia,\\
Institute of Mathematics, Physics and Mechanics$^3$,
Jadranska 19,
SI-1000 Ljubljana, Slovenia \\
e-mail:   aljosa.peperko@fs.uni-lj.si\\
*Corresponding author
} \today}


\subjclass[2020]{47A10, 47B65, 47B34, 15A42, 15A60, 15B48}

\keywords{weighted Hadamard-Schur geometric mean; Hadamard-Schur product; essential spectral radius; Haussdorf measure of noncompactness; joint and generalized spectral radius; 
 positive kernel operators; non-negative matrices; bounded sets of operators}


\begin{abstract}
We prove new inequalities for the essential generalized and the essential joint spectral radius of Hadamard (Schur) weighted geometric means of bounded sets of infinite nonnegative matrices that define operators on suitable  Banach  sequence spaces and of bounded sets of positive kernel operators on $L^2$. To our knowledge the obtained inequalities are new even in the case of singelton sets.
\end{abstract} \maketitle

\section{Introduction}

In \cite{Zh09}, X. Zhan conjectured that, for non-negative $N\times N$ matrices $A$ and $B$, the spectral radius $\rho (A\circ B)$ of the Hadamard product satisfies
\be
\rho (A\circ B) \le \rho (AB),
\label{qu}
\ee
where $AB$ denotes the usual matrix product of $A$ and $B$. This conjecture was confirmed by K.M.R. Audenaert in \cite{Au10}
by proving
\be
\rho (A\circ B) \le \rho ((A\circ A)(B\circ B)) ^{\frac{1}{2}}\le \rho (AB).
\label{Aud}
\ee
These inequalities were established via a trace description of the spectral radius.
Soon after, inequality (\ref{qu}) was reproved, generalized and refined in different ways by
several authors (\cite{HZ10, Hu11, S11, Sc11, P12, CZ15, DP16, P17, P17+, BP21, BP22b, B23+}).
Using the fact that the Hadamard product is a principal submatrix of
the Kronecker product, R.A. Horn and F. Zhang  proved in \cite{HZ10} the inequalities
\be
\rho (A\circ B) \le \rho (AB\circ BA)^{\frac{1}{2}}\le \rho (AB).
\label{HZ}
\ee
Applying the techniques of \cite{HZ10}, Z.  Huang proved that
\be
\rho (A_1 \circ A_2 \circ \cdots \circ A_m) \le \rho (A_1 A_2 \cdots A_m)
\label{Hu}
\ee
for $n\times n$ non-negative matrices $A_1, A_2, \cdots, A_m$ (see \cite{Hu11}).  A.R. Schep was the first one to observe that the results from \cite{DP05} and \cite{P06} are applicable in this context (see \cite{S11} and \cite{Sc11}). He extended inequalities (\ref{Aud}) and (\ref{HZ}) to non-negative matrices that define bounded
operators on sequence spaces (in particular on $l^p$ spaces, $1\le p <\infty$) and proved in
\cite[Theorem 2.7]{S11}  that
\be
\rho (A\circ B) \le \rho ((A\circ A)(B\circ B))^{\frac{1}{2}}\le  \rho (AB\circ AB)^{\frac{1}{2}}\le \rho (AB)
\label{Sproved}
\ee
(note that there was an error in the statement of \cite[Theorem 2.7]{S11}, which was corrected in \cite{Sc11} and \cite{P12}).
In  \cite{P12}, the second author of the current paper extended the inequality (\ref{Hu}) to non-negative matrices that define bounded
operators on Banach sequence spaces (see below for the exact definitions) and proved that the inequalities 
\be
\rho (A\circ B) \le \rho ((A\circ A)(B\circ B))^{\frac{1}{2}}\le \rho(AB \circ AB)^{\frac{\beta}{2}} \rho(BA \circ BA)^{\frac{1-\beta}{2}} \le \rho (AB)
\label{P1}
\ee
and
\be
\rho (A\circ B) \le \rho (AB\circ BA)^{\frac{1}{2}}\le  \rho(AB \circ AB)^{\frac{1}{4}} \rho(BA \circ BA)^{\frac{1}{4}} \le  \rho (AB).
\label{P2}
\ee
hold, where $\beta \in [0,1]$. Moreover, 
 he generalized these inequalities to the setting of the generalized and the joint spectral radius of bounded sets of such non-negative matrices. 


In \cite[Theorem 2.8]{S11},  A.R. Schep proved
 that the inequality
\be
\rho \left(A ^{\left( \frac{1}{2} \right)} \circ B  ^{\left( \frac{1}{2} \right)} \right) \le \rho (AB) ^{\frac{1}{2}}
\label{Schep}
\ee
holds for positive kernel operators on $L^p$ spaces. Here $A ^{\left( \frac{1}{2} \right)} \circ B  ^{\left( \frac{1}{2}\right)} $ denotes the Hadamard geometric mean of operators $A$ and $B$. 
 R. Drnov\v{s}ek and the second author (see \cite{DP16, P17+}), generalized this inequality and proved that the inequalities
$$\rho \left(A_1^{\left(\frac{1}{m}\right)} \circ A_2^{\left(\frac{1}{m}\right)} \circ \cdots \circ
A_m^{\left(\frac{1}{m}\right)}\right) \;\;\;\;\;\;\;\;\;\;\;\;\;\;\;\;\;\;\;\;\;\;\;\;\;\;\;\;\;\;$$
\be
\le \rho  \left(P_1^{\left(\frac{1}{m}\right)} \circ P_2^{\left(\frac{1}{m}\right)} \circ \cdots \circ
P_m^{\left(\frac{1}{m}\right)}\right) ^{\frac{1}{m}}  \le \rho (A_1 A_2 \cdots A_m) ^{\frac{1}{m}} .
\label{genHuBfsP}
\ee
hold  for positive kernel operators $A_1, \ldots, A_m$ on an arbitrary Banach function space $L$, where  $P_j = A_j \ldots A_m A_1 \ldots A_{j-1}$ for $j=1,\ldots , m$.  Formally, here and throughout the article $A_{j-1}=I$ for $j=1$ (eventhough $I$ might not be a well defined kernel operator). The second author proved further in \cite[Theorem 4.4, (4.8)]{P17+}) that in the $L^2$ case  it holds
\be
\|A^{(\frac{1}{2})} \circ  B^{(\frac{1}{2})} \| \le \rho  \left ( (A^* B ) ^{(\frac{1}{2})}\circ (B ^* A)^{(\frac{1}{2})}\right)^{\frac{1}{2}} \le \rho  (A^* B )^{\frac{1}{2}}= \rho  (A B^*)^{\frac{1}{2}},
\label{Pep19}
\ee
where $\|\cdot\|$ denotes the operator norm. 
In \cite[Theorem 3.2]{P19}, the second author showed that (\ref{genHuBfsP}) (and thus also (\ref{Schep})) holds also for the essential radius $\rho _{ess}$ under the additional condition that $L$ and its Banach dual $L^*$ have order continuous norms.
Several additional closely related results, generalizations and refinements of the above results were obtained in \cite{P19, Zh18, P21, BP21, BP22b, B23+}. However, it remained unclear whether the analogues of inequalities
(\ref{qu})-(\ref{P2}) are valid for the essential spectral radius of infinite nonnegative matrices that define operators on e.g. $l^2$ and whether an analogue of (\ref{Pep19}) is valid for a suitable measure of non-compactness and for the essential spectral radius of positive kernel operators on $L^2$.  In this paper (as a very special case of our results) we positively answer these questions (see Corollary \ref{essential_Aud}, Theorem \ref{kathyth2_ess} and Corollary \ref{ess_star} below).

The rest of the article is organized in the following way. In Section 2 we recall definitions and results that we will use in our proofs. 
In Section 3 we prove the key results (Theorems \ref{ess_needed}, \ref{special_case_ess} and \ref{thbegin_ess}) on the Haussdorf measure of noncompactness and the essential spectral radius of ordinary products of Hadamard powers and ordinary products of Hadamard weighted geometric means of infinite nonnegative matrices that define operators on suitable Banach sequence  spaces. These results are essential analogues of the known results for the operator norm and the spectral radius. By combining ideas of proofs from previously known results we prove in Theorem \ref{finally_ess} an extension of these results to the essential joint and essential generalized spectral radius of bounded sets of infinite nonnegative matrices.  In Section 4 we apply these results to obtain several essential analogues of known results on sums of Hadamard weighted geometric means, weighted geometric symmetrizations and Hadamard products of  bounded sets of infinite nonnegative matrices (Theorems \ref{finally2_ess}, \ref{sym_matrices}, \ref{kathyth2_ess} and Corollary \ref{P12_cor}). In Corollary \ref{essential_Aud} we obtain the essential versions of (\ref{P1}) and (\ref{P2}), while the essential version of (\ref{Hu}) is a very special case of Theorem \ref{kathyth2_ess}.  In Section 5 we prove new essential results for operators on Hilbert spaces. In Corollary \ref{ess_star} we prove the essential version of (\ref{Pep19}). We conclude the article by obtaining essential versions of several recent results from \cite{B23+}.

\section{Preliminaries}
\vspace{1mm}

Let $\mu$ be a $\sigma$-finite positive measure on a $\sigma$-algebra $\cM$ of subsets of a non-void set $X$.
Let $M(X,\mu)$ be the vector space of all equivalence classes of (almost everywhere equal)
complex measurable functions on $X$. A Banach space $L \subseteq M(X,\mu)$ is
called a {\it Banach function space} if $f \in L$, $g \in M(X,\mu)$,
and $|g| \le |f|$ imply that $g \in L$ and $\|g\| \le \|f\|$. Throughout the article, it is assumed that  $X$ is the carrier of $L$, that is, there is no subset $Y$ of $X$ of
 strictly positive measure with the property that $f = 0$ a.e. on $Y$ for all $f \in L$ (see \cite{Za83}).

 Let $R$ denote the set $\{1, \ldots, N\}$ for some $N \in \NN$ or the set $\NN$ of all natural numbers.
Let $S(R)$ be the vector lattice of all complex sequences $(x_n)_{n\in R}$.
A Banach space $L \subseteq S(R)$ is called a {\it Banach sequence space} if $x \in S(R)$, $y \in L$
and $|x| \le |y|$ imply that $x \in L$ and $\|x\|_L \le \|y\|_L$. Observe that a Banach sequence space is a Banach function space over a measure space $(R, \mu)$,
where $\mu$ denotes the counting measure on $R$. Denote by $\mathcal{L}$ the collection of all Banach sequence spaces
$L$ satisfying the property that $e_n = \chi_{\{n\}} \in L$ and
$\|e_n\|_L=1$ for all $n \in R$. For $L\in \mathcal{L}$ the set $R$ is the carrier of $L$.

Standard examples of Banach sequence spaces are Euclidean spaces, $l^p$ spaces for $1\le p \le \infty$,  the space $c_0\in \mathcal{L}$
of all null convergent sequences  (equipped with the usual norms and the counting measure), while standard examples of Banach function spaces are  the well-known spaces $L^p (X,\mu)$ ($1\le p \le \infty$) and other less known examples such as Orlicz, Lorentz,  Marcinkiewicz  and more general  rearrangement-invariant spaces (see e.g. \cite{BS88, CR07, KM99} and the references cited there), which are important e.g. in interpolation theory and in the theory of partial differential equations.
 Recall that the cartesian product $L=E\times F$
of Banach function spaces is again a Banach function space, equipped with the norm
$\|(f, g)\|_L=\max \{\|f\|_E, \|g\|_F\}$. 

If $\{f_n\}_{n\in \mathbb{N}} \subset M(X,\mu)$ is a decreasing sequence and
$f=\inf\{f_n \in M(X,\mu): n \in \NN \}$, then we write $f_n \downarrow f$. 
A Banach function space $L$ has an {\it order continuous norm}, if $0\le f_n \downarrow 0$
implies $\|f_n\|_L \to 0$ as $n \to \infty$. It is well known that spaces $L^p  (X,\mu)$, $1\le p< \infty$, have order continuous
norm. Moreover, the norm of any reflexive Banach function space is
order continuous. 
In particular, we will be interested in  Banach function spaces $L$ such that $L$ and its Banach dual space $L^*$ have order continuous norms. Examples of such spaces are $L^p  (X,\mu)$, $1< p< \infty$, while the space
$L=c_0$ 
is an example of a non-reflexive Banach sequence space, such that $L$ and  $L^*=l^1$ have order continuous
norms. 
  
By an {\it operator} on a Banach function space $L$ we always mean a linear
operator on $L$.  An operator $A$ on $L$ is said to be {\it positive}
if it maps nonnegative functions to nonnegative ones, i.e., $AL_+ \subset L_+$, where $L_+$ denotes the positive cone $L_+ =\{f\in L : f\ge 0 \; \mathrm{a.e.}\}$.
Given operators $A$ and $B$ on $L$, we write $A \ge B$ if the operator $A - B$ is positive.

Recall that a positive  operator $A$ is always bounded, i.e., its operator norm
\be
\|A\|=\sup\{\|Ax\|_L : x\in L, \|x\|_L \le 1\}=\sup\{\|Ax\|_L : x\in L_+, \|x\|_L \le 1\}
\label{equiv_op}
\ee
is finite.
Also, its spectral radius $\rho (A)$ is always contained in the spectrum.

An operator $A$ on a Banach function space $L$ is called a {\it kernel operator} if
there exists a $\mu \times \mu$-measurable function
$a(x,y)$ on $X \times X$ such that, for all $f \in L$ and for almost all $x \in X$,
$$ \int_X |a(x,y) f(y)| \, d\mu(y) < \infty \ \ \ {\rm and} \ \
   (Af)(x) = \int_X a(x,y) f(y) \, d\mu(y)  .$$
One can check that a kernel operator $A$ is positive iff
its kernel $a$ is non-negative almost everywhere.

Let $L$ be a Banach function space such that $L$ and $L^*$ have order
continuous norms and let $A$ and $B$ be  positive kernel operators on $L$. By $\gamma (A)$ we denote the Hausdorff measure of
non-compactness of $A$, i.e.,
$$\gamma (A) = \inf\left\{ \delta >0 : \;\; \mathrm{there}\;\; \mathrm{is} \;\; \mathrm{a}\;\; \mathrm{finite}\;\; M \subset L \;\;\mathrm{such} \;\; \mathrm{that} \;\; A(D_L) \subset M + \delta D_L  \right\},$$
where $D_L =\{f\in L : \|f\|_L \le 1\}$. Then $\gamma (A) \le \|A\|$, $\gamma (A+B) \le \gamma (A) + \gamma (B)$, $\gamma(AB) \le \gamma (A)\gamma (B)$ and $\gamma (\alpha A) =\alpha \gamma (A)$ for $\alpha \ge 0$. Also
$0 \le A\le B$  implies $\gamma (A) \le \gamma (B)$ (see e.g. \cite[Corollary 4.3.7 and Corollary 3.7.3]{Me91}). Let $\rho _{ess} (A)$ denote the essential spectral radius of $A$, i.e., the spectral radius of the Calkin image of $A$ in the Calkin algebra. Then
\be
 \rho _{ess} (A) =\lim _{j \to \infty} \gamma (A^j)^{1/j}=\inf _{j \in \NN} \gamma (A^j)^{1/j}
\label{esslim=inf}
\ee
and $\rho _{ess} (A) \le \gamma (A)$. Recall that if $L=L^2(X, \mu)$, then $\gamma (A^*) = \gamma (A)$ and $\rho _{ess} (A^*)=\rho _{ess} (A)  $, where $A^*$ denotes the adjoint of $A$ (see e.g. \cite[Proposition 4.3.3, Theorems 4.3.6 and 4.3.13 and Corollary 3.7.3]{Me91}, \cite[Theorem 1]{Nussbaum70}). Note that equalities (\ref{esslim=inf}) and  $\rho _{ess} (A^*)=\rho _{ess} (A)  $ are valid for any bounded operator $A$ on a given complex Banach space $L$ (see e.g. \cite[Theorem 4.3.13 and Proposition 4.3.11]{Me91}, \cite[Theorem 1]{Nussbaum70}).


It is well-known that kernel operators play a very important, often even central, role in a variety of applications from differential and integro-differential equations, problems from physics
(in particular from thermodynamics), engineering, statistical and economic models, etc (see e.g. \cite{J82, P19} 
and the references cited there).
For the theory of Banach function spaces and more general Banach lattices we refer the reader to the books \cite{Za83, BS88, AA02, AB85, Me91}.

Let $A$ and $B$ be positive kernel operators on a Banach function space $L$ with kernels $a$ and $b$ respectively,
and $\alpha \ge 0$.
The \textit{Hadamard (or Schur) product} $A \circ B$ of $A$ and $B$ is the kernel operator
with kernel equal to $a(x,y)b(x,y)$ at point $(x,y) \in X \times X$ which can be defined (in general)
only on some order ideal of $L$. Similarly, the \textit{Hadamard (or Schur) power}
$A^{(\alpha)}$ of $A$ is the kernel operator with kernel equal to $(a(x, y))^{\alpha}$
at point $(x,y) \in X \times X$ which can be defined only on some order ideal of $L$.

Let $A_1 ,\ldots, A_m$ be positive kernel operators on a Banach function space $L$,
and $\alpha _1, \ldots, \alpha _m$ positive numbers such that $\sum_{j=1}^m \alpha _j = 1$.
Then the {\it  Hadamard weighted geometric mean}
$A = A_1 ^{( \alpha _1)} \circ A_2 ^{(\alpha _2)} \circ \cdots \circ A_m ^{(\alpha _m)}$ of
the operators $A_1 ,\ldots, A_m$ is a positive kernel operator defined
on the whole space $L$, since $A \le \alpha _1 A_1 + \alpha _2 A_2 + \ldots + \alpha _m A_m$ by the inequality between the weighted arithmetic and geometric means.

A matrix $A=[a_{ij}]_{i,j\in R}$ is called {\it nonnegative} if $a_{ij}\ge 0$ for all $i, j \in R$.
For notational convenience, we sometimes write $a(i,j)$ instead of $a_{ij}$.

We say that a nonnegative matrix $A$ defines an operator on $L$ if $Ax \in L$ for all $x\in L$, where
$(Ax)_i = \sum _{j \in R}a_{ij}x_j$. Then $Ax \in L_+$ for all $x\in L_+$ and
so $A$ defines a positive kernel operator on $L$.

 Let us recall  the following result, which was proved in \cite[Theorem 2.2]{DP05} and
\cite[Theorem 5.1 and Example 3.7]{P06} (see also e.g. \cite[Theorem 2.1]{P17}).

\begin{theorem}
\label{thbegin}
Let $\{A_{i j}\}_{i=1, j=1}^{k, m}$ be positive kernel operators on a Banach function space $L$ and  $\alpha _1$, $\alpha _2$,..., $\alpha _m$  positive numbers.

\noindent (i) If 
$\sum_{j=1}^{m} \alpha _j = 1$, then the positive kernel operator
\be
A:= \left(A_{1 1}^{(\alpha _1)} \circ \cdots \circ A_{1 m}^{(\alpha _m)}\right) \ldots \left(A_{k 1}^{(\alpha _1)} \circ \cdots \circ A_{k m}^{(\alpha _m)} \right)
\label{osnovno}
\ee
satisfies the following inequalities
\begin{equation}
A \le
(A_{1 1} \cdots  A_{k 1})^{(\alpha _1)} \circ \cdots
\circ (A_{1 m} \cdots A_{k m})^{(\alpha _m)} , \\
\label{norm2}
\end{equation}
\begin{eqnarray}
\nonumber
\left\|A \right\| &\le &\left\|(A_{1 1} \cdots  A_{k 1})^{(\alpha _1)} \circ \cdots
\circ (A_{1 m} \cdots A_{k m})^{(\alpha _m)} \right\|\\
&\le&\left\|A_{1 1} \cdots  A_{k 1}\right\|^{\alpha _1}\cdots\left\|A_{1 m} \cdots  A_{k m}\right\|^{\alpha _m}
\label{spectral2}
\end{eqnarray}
\begin{eqnarray}
\nonumber
\rho \left(A \right)& \le &\rho \left((A_{1 1} \cdots  A_{k 1})^{(\alpha _1)} \circ \cdots
\circ (A_{1 m} \cdots A_{k m})^{(\alpha _m)}\right)\\
&\le&\rho \left( A_{1 1} \cdots  A_{k 1} \right)^{\alpha _1} \cdots
\rho \left( A_{1 m} \cdots A_{k m}\right)^{\alpha _m} .
\label{tri}
\end{eqnarray}
If, in addition, $L$ and $L^*$ have order continuous norms, then
\begin{eqnarray}
\nonumber
\gamma (A) & \le &\gamma \left((A_{1 1} \cdots  A_{k 1})^{(\alpha _1)} \circ \cdots
\circ (A_{1 m} \cdots A_{k m})^{(\alpha _m)}\right)\\
 &\le &
 \label{meas_noncomp}
\gamma (A_{1 1} \cdots  A_{k 1})^{\alpha _1} \cdots \gamma(A_{1 m} \cdots A_{k m})^{\alpha _m}, \\
\nonumber
\rho _{ess} \left(A \right) & \le &\rho _{ess} \left((A_{1 1} \cdots  A_{k 1})^{(\alpha _1)} \circ \cdots
\circ (A_{1 m} \cdots A_{k m})^{(\alpha _m)}\right)\\
&\le  &
\label{ess_spectral}
\rho _{ess} \left( A_{1 1} \cdots  A_{k 1} \right)^{\alpha _1} \cdots
\rho _{ess} \left( A_{1 m} \cdots A_{k m}\right)^{\alpha _m} .
\end{eqnarray}

\noindent(ii) If $L\in\mathcal L$, $\sum_{j=1}^{m} \alpha _j\ge 1$ and $\{A_{i j}\}_{i=1, j=1}^{k, m}$ are nonnegative matrices that define positive operators on $L$, then $A$ from (\ref{osnovno}) defines a  positive operator on $L$ and the inequalities (\ref{norm2}), (\ref{spectral2}) and (\ref{tri}) hold.
\label{DBPfs}
\end{theorem}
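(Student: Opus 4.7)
The plan is to first establish the operator-level inequality (\ref{norm2}) by a pointwise Hölder estimate on the kernel of $A$, and then to propagate that inequality to each of $\|\cdot\|$, $\rho$, $\gamma$, $\rho_{ess}$ by combining (a) monotonicity of each functional on positive operators with (b) a Hölder-type estimate for a single Hadamard weighted geometric mean of positive kernel operators.

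For (\ref{norm2}) in part (i), denote by $a_{ij}$ the kernel of $A_{ij}$; the kernel of $A$ at $(x_0, x_k)$ is
\begin{equation*}
a(x_0, x_k) = \int \cdots \int \prod_{j=1}^{m} \Bigl(\prod_{i=1}^{k} a_{ij}(x_{i-1}, x_i)\Bigr)^{\alpha_j} \, d\mu(x_1)\cdots d\mu(x_{k-1}).
\end{equation*}
Since $\sum_j \alpha_j = 1$, the generalized Hölder inequality applied to the $m$ factors under the iterated integral yields
\begin{equation*}
a(x_0, x_k) \le \prod_{j=1}^{m} \Bigl( \int \cdots \int \prod_{i=1}^{k} a_{ij}(x_{i-1}, x_i) \, d\mu(x_1)\cdots d\mu(x_{k-1})\Bigr)^{\alpha_j},
\end{equation*}
and the right-hand side is precisely the kernel of $(A_{11}\cdots A_{k1})^{(\alpha_1)} \circ \cdots \circ (A_{1m}\cdots A_{km})^{(\alpha_m)}$, proving (\ref{norm2}).

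The left-hand inequality in each of (\ref{spectral2}), (\ref{tri}), (\ref{meas_noncomp}), (\ref{ess_spectral}) then follows at once from (\ref{norm2}) and the monotonicity of $\|\cdot\|$, $\rho$, $\gamma$, $\rho_{ess}$ on positive operators, using the properties of $\gamma$ and the identity (\ref{esslim=inf}) recalled in Section~2. The right-hand inequality in (\ref{spectral2}) is the classical Hölder-type estimate $\|B_1^{(\alpha_1)} \circ \cdots \circ B_m^{(\alpha_m)}\| \le \prod_j \|B_j\|^{\alpha_j}$ applied with $B_j = A_{1j}\cdots A_{kj}$; the same estimate with $\gamma$ in place of $\|\cdot\|$, available under the order-continuity hypothesis on $L$ and $L^*$ (cf.~\cite{P19}), produces (\ref{meas_noncomp}). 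For (\ref{tri}) and (\ref{ess_spectral}) the standard device is to apply (\ref{norm2}) to $A^n$, viewed as a product of $nk$ Hadamard weighted geometric means, obtaining
\begin{equation*}
A^n \le \bigl((A_{11}\cdots A_{k1})^n\bigr)^{(\alpha_1)} \circ \cdots \circ \bigl((A_{1m}\cdots A_{km})^n\bigr)^{(\alpha_m)},
\end{equation*}
then passing to $\|\cdot\|$ (resp.\ $\gamma$), applying the Hölder-type estimate factorwise, extracting $n$-th roots, and sending $n\to\infty$ via Gelfand's formula (resp.\ (\ref{esslim=inf})).

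For part (ii) the pointwise Hölder argument for (\ref{norm2}) transfers verbatim to matrix entries, so it remains only to verify that $A$ and the right-hand side of (\ref{norm2}) still define bounded operators on $L\in\mathcal{L}$ when merely $\sigma := \sum_j \alpha_j \ge 1$, and that the resulting norm and spectral-radius estimates survive. The hypothesis $L\in\mathcal{L}$ enters through $\|e_n\|_L = 1$, which forces $a_{ij}(k,l) \le \|A_{ij}\|$ for every matrix entry; combined with a rescaling to the weights $\alpha_j/\sigma$ (which do sum to $1$), this reduces the required estimates to the case already handled in part~(i), with the extra factor absorbed via the entry bound. The main obstacle throughout is the $\gamma$-analogue of the Hölder-Hadamard inequality for a single weighted geometric mean: unlike the norm version it is not a pointwise estimate but a genuine compactness argument that exploits the order continuity of both $L$ and $L^*$, which is why (\ref{meas_noncomp}) and (\ref{ess_spectral}) are stated only under that extra hypothesis.
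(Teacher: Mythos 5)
The paper does not prove Theorem \ref{thbegin} at all --- it recalls it from \cite{DP05} and \cite{P06} --- so there is no in-paper proof to compare against. Your part (i) is essentially the standard argument from those references and is correct: write the kernel of $A$ as an iterated integral, apply the generalized H\"older inequality in the intermediate variables $x_1,\dots,x_{k-1}$ (legitimate precisely because $\sum_{j}\alpha_j=1$) to get (\ref{norm2}), then use monotonicity of $\|\cdot\|$, $\rho$, $\gamma$, $\rho_{ess}$ together with the single-mean H\"older estimates and the power trick $A^n\le\bigl((A_{11}\cdots A_{k1})^n\bigr)^{(\alpha_1)}\circ\cdots\circ\bigl((A_{1m}\cdots A_{km})^n\bigr)^{(\alpha_m)}$ plus Gelfand's formula and (\ref{esslim=inf}).

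Part (ii) has a genuine gap. The pointwise H\"older step does \emph{not} transfer ``verbatim'' when $\sigma=\sum_{j=1}^m\alpha_j>1$: the inequality $\int\prod_j f_j^{\alpha_j}\,d\nu\le\prod_j\bigl(\int f_j\,d\nu\bigr)^{\alpha_j}$ is an instance of H\"older only for $\sum_j\alpha_j=1$, and for $\sigma>1$ it is false over a general measure (already $m=1$, $\alpha_1=2$ would read $\int f^2\le(\int f)^2$). What rescues the matrix case is the superadditivity of $x\mapsto x^{\sigma}$ over sums, $\sum_l c_l^{\sigma}\le\bigl(\sum_l c_l\bigr)^{\sigma}$ for $\sigma\ge1$ --- i.e.\ inequality (\ref{mitr2}) specialized to the counting measure --- which, combined with your rescaling $\beta_j=\alpha_j/\sigma$, gives
$\sum_l\prod_j f_{j,l}^{\alpha_j}=\sum_l\bigl(\prod_j f_{j,l}^{\beta_j}\bigr)^{\sigma}\le\bigl(\sum_l\prod_j f_{j,l}^{\beta_j}\bigr)^{\sigma}\le\prod_j\bigl(\sum_l f_{j,l}\bigr)^{\alpha_j}$.
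This is the one place where the hypothesis $L\in\mathcal{L}$ (discreteness) is genuinely used for (\ref{norm2}), and it is why statements such as (\ref{gl1t}) hold only for matrices; you never identify it. The mechanism you do invoke --- absorbing the excess exponent via the entry bound $a_{ij}(k,l)\le\|A_{ij}\|$ --- yields inequalities carrying spurious factors of the form $\prod_j\|A_{ij}\|^{\alpha_j-\beta_j}$ (these are the $\|A\|_\infty^{t-1}$-type bounds recalled from \cite{P21}), not the clean inequalities (\ref{norm2}), (\ref{spectral2}), (\ref{tri}) as stated. The entry bound does play a role, but only later: it gives $\|C^{(\sigma)}\|\le\|C\|^{\sigma}$, and combined with $(C^{(\sigma)})^n\le(C^n)^{(\sigma)}$ and a limit it gives $\rho(C^{(\sigma)})\le\rho(C)^{\sigma}$; it cannot by itself deliver the operator inequality (\ref{norm2}) for $\sigma\ge1$.
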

The following result is a special case  of Theorem \ref{DBPfs}.
\begin{theorem}
\label{special_case}
Let $A_1 ,\ldots, A_m$ be positive kernel operators on a Banach function space  $L$
and $\alpha _1, \ldots, \alpha _m$ positive numbers. 

\noindent (i) If $\sum_{j=1}^m \alpha _j = 1$, then 
\be
 \|A_1 ^{( \alpha _1)} \circ A_2 ^{(\alpha _2)} \circ \cdots \circ A_m ^{(\alpha _m)} \| \le
  \|A_1\|^{ \alpha _1}  \|A_2\|^{\alpha _2} \cdots \|A_m\|^{\alpha _m}
\label{gl1nrm}
\ee
and
\be
 \rho(A_1 ^{( \alpha _1)} \circ A_2 ^{(\alpha _2)} \circ \cdots \circ A_m ^{(\alpha _m)} ) \le
\rho(A_1)^{ \alpha _1} \, \rho(A_2)^{\alpha _2} \cdots \rho(A_m)^{\alpha _m} .
\label{gl1vecr}
\ee
If, in addition, $L$ and $L^*$ have order continuous norms, then
\be
 \gamma (A_1 ^{( \alpha _1)} \circ A_2 ^{(\alpha _2)} \circ \cdots \circ A_m ^{(\alpha _m)} )\le
  \gamma(A_1)^{ \alpha _1}  \gamma(A_2)^{\alpha _2} \cdots \gamma(A_m)^{\alpha _m}
\label{gl1meas_nonc}
\ee
and
\be
 \rho _{ess}(A_1 ^{( \alpha _1)} \circ A_2 ^{(\alpha _2)} \circ \cdots \circ A_m ^{(\alpha _m)} ) \le
\rho _{ess }(A_1)^{ \alpha _1} \, \rho _{ess}(A_2)^{\alpha _2} \cdots \rho _{ess}(A_m)^{\alpha _m} .
\label{gl1vecress}
\ee
\noindent(ii) If $L\in\mathcal L$, $\sum_{j=1}^{m} \alpha _j\ge 1$ and if  $A_1 ,\ldots, A_m$ are nonnegative matrices that define positive operators on $L$, then $A_1 ^{( \alpha _1)} \circ A_2 ^{(\alpha _2)} \circ \cdots \circ A_m ^{(\alpha _m)}$ defines a positive operator on $L$ and (\ref{gl1nrm}) and (\ref{gl1vecr}) hold.

\noindent (iii)  If $L\in\mathcal L$, $t\ge1$ and if $A, A_1 ,\ldots, A_m$ are nonnegative matrices that define operators on $L$,    then $A^{(t)}$ 
 defines an  operator on L and the following inequalities hold
\be
\;\;\;\;\;A_1^{(t)}\cdots A_m^{(t)}\le(A_1\cdots A_m)^{(t)},
\label{gl1t}
\ee
\be
\rho(A_1^{(t)}\cdots A_m^{(t)})\le\rho(A_1\cdots A_m)^{t},
\label{gl1nt}
\ee
\be
\|A_1^{(t)}\cdots A_m^{(t)}\|\le\|A_1\cdots A_m\|^{t}.
\label{gl1vecrt}
\ee
\end{theorem}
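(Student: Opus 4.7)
The entire statement is, as the surrounding text hints, a direct specialization of Theorem \ref{DBPfs}; the plan is simply to choose the two index ranges of that theorem appropriately for each of the three parts. To avoid a clash of names, let $K$ and $M$ denote the two index ranges of Theorem \ref{DBPfs} (taking the place of the $k$ and $m$ there), while $m$ keeps its meaning from the present statement.

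For parts (i) and (ii) I would take $K=1$ and $M=m$, setting $A_{1,j} := A_j$ for $j=1,\ldots,m$. Then the operator $A$ of (\ref{osnovno}) collapses to the single Hadamard geometric mean $A_1^{(\alpha_1)} \circ \cdots \circ A_m^{(\alpha_m)}$, and each inner product $A_{1,j}\cdots A_{K,j}$ reduces to the single operator $A_j$. Consequently the inequalities (\ref{spectral2}), (\ref{tri}), (\ref{meas_noncomp}) and (\ref{ess_spectral}) of Theorem \ref{DBPfs}(i) specialize at once to (\ref{gl1nrm}), (\ref{gl1vecr}), (\ref{gl1meas_nonc}) and (\ref{gl1vecress}), which covers part (i); part (ii) is obtained by the identical reduction from Theorem \ref{DBPfs}(ii), whose hypothesis $\sum_j \alpha_j \ge 1$ matches the one here.

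For part (iii) the roles of the two indices are interchanged. I would apply Theorem \ref{DBPfs}(ii) with $M=1$, $K=m$, $\alpha_1 = t$, and $A_{i,1} := A_i$ for $i=1,\ldots,m$. Then (\ref{osnovno}) becomes $A = A_1^{(t)} \cdots A_m^{(t)}$, so Theorem \ref{DBPfs}(ii) already guarantees that this product is a well-defined operator on $L$, and specializing (\ref{norm2}), (\ref{spectral2}) and (\ref{tri}) produces exactly (\ref{gl1t}), (\ref{gl1vecrt}) and (\ref{gl1nt}). The claim that $A^{(t)}$ by itself defines an operator on $L$ is the further trivial case $K=M=1$ with $A_{1,1}=A$ and $\alpha_1=t$. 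The only (minor) obstacle I foresee is bookkeeping: one has to keep the two indices of Theorem \ref{DBPfs} straight, since parts (i)--(ii) collapse the outer product index, whereas part (iii) collapses the inner Hadamard index.
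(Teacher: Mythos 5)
Your proposal is correct and is exactly the route the paper takes: the paper simply states that Theorem \ref{special_case} is a special case of Theorem \ref{DBPfs}, and your choices ($k=1$ with $m$ Hadamard factors for (i)--(ii), and $m=1$, $k$ outer factors with $\alpha_1=t\ge 1$ for (iii)) are the intended specializations.
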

The following result was proved in \cite[Corollary 2.10]{P21}.
\begin{theorem}  Given $L\in \mathcal{L}$, let $A$ be a nonnegative matrix that defines an operator on $L$ and let $t\ge 1$. Then 
\begin{eqnarray}
A^{(t)} &\le & \|A\|_{\infty} ^{t-1} A, \\
\label{norm_imp_t}
\|A^{(t)}\| &\le & \|A\|_{\infty} ^{t-1} \| A\|, \\
\label{dobra_t}
\rho(A^{(t)} )&\le & \|A\|_{\infty} ^{t-1}\rho( A). 
\end{eqnarray} 
If, in addition, $L$ and $L^*$ have order continuous norms, then
\begin{eqnarray}
\label{dobra_gamma}
\gamma (A^{(t)})&\le & \|A\|_{\infty} ^{t-1} \gamma(A), \\
\label{dobra_r_ess}
\rho_{ess}(A^{(t)} )&\le & \|A\|_{\infty} ^{t-1}\rho_{ess}(A). 
\end{eqnarray} 
\end{theorem}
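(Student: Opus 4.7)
The plan is to reduce the entire statement to a single pointwise inequality for the matrix entries and then invoke the already-established monotonicity and positive homogeneity of $\|\cdot\|$, $\rho$, $\gamma$, and $\rho_{ess}$ on positive operators. The key observation, which I would establish first, is that for each pair $i,j\in R$ the entry of $A^{(t)}$ satisfies
$$\bigl(A^{(t)}\bigr)_{ij}=a_{ij}^{t}=a_{ij}^{t-1}\cdot a_{ij}\le \|A\|_{\infty}^{\,t-1}\, a_{ij},$$
because $t\ge 1$ and $a_{ij}\le \|A\|_{\infty}$ (the entrywise supremum). This yields the first inequality $A^{(t)}\le \|A\|_{\infty}^{\,t-1} A$ at once, and as a byproduct shows that $A^{(t)}$ in fact defines an operator on $L$, since it is dominated by a scalar multiple of the operator $A$ on the Banach lattice $L\in\mathcal{L}$.

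Next, the norm and spectral radius inequalities follow immediately from the lattice property of the norm and the standard monotonicity of the spectral radius for positive operators on $L$: applying these together with positive homogeneity to $0\le A^{(t)}\le \|A\|_{\infty}^{\,t-1}A$ gives $\|A^{(t)}\|\le \|A\|_{\infty}^{\,t-1}\|A\|$ and $\rho(A^{(t)})\le \|A\|_{\infty}^{\,t-1}\rho(A)$. Under the additional hypothesis that $L$ and $L^{*}$ have order continuous norms, the $\gamma$ inequality follows in exactly the same way from the monotonicity ($0\le B\le C\Rightarrow \gamma(B)\le \gamma(C)$) and homogeneity of $\gamma$ recalled in the Preliminaries. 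For the essential spectral radius, I would iterate the basic inequality: positivity of $A$ and $A^{(t)}$ gives $0\le (A^{(t)})^{j}\le \|A\|_{\infty}^{\,j(t-1)}A^{j}$ for every $j\in\NN$, so by monotonicity of $\gamma$ and the formula (\ref{esslim=inf}),
$$\rho_{ess}\bigl(A^{(t)}\bigr)=\lim_{j\to\infty}\gamma\bigl((A^{(t)})^{j}\bigr)^{1/j}\le \|A\|_{\infty}^{\,t-1}\lim_{j\to\infty}\gamma\bigl(A^{j}\bigr)^{1/j}=\|A\|_{\infty}^{\,t-1}\rho_{ess}(A).$$

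There is no real obstacle here: the proof reduces to a one-line pointwise estimate followed by citations to the monotonicity tools gathered in Section 2. The only care worth taking in the writeup is the verification that each of the four functionals ($\|\cdot\|$, $\rho$, $\gamma$, $\rho_{ess}$) is indeed monotone on the positive cone in the precise setting of Banach sequence spaces $L\in\mathcal{L}$ (and, for the last two, under the order continuity assumptions), which is exactly what the Preliminaries section has already set up.
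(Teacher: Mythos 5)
The paper does not actually prove this statement: it is quoted from \cite[Corollary 2.10]{P21}, so there is no in-paper proof to compare against. Your argument is correct and is the standard one for results of this type: the pointwise estimate $a_{ij}^{t}=a_{ij}^{t-1}a_{ij}\le \|A\|_{\infty}^{t-1}a_{ij}$ (valid since $t\ge 1$ and $a_{ij}\le\|A\|_{\infty}$) gives $0\le A^{(t)}\le \|A\|_{\infty}^{t-1}A$, after which everything follows from the monotonicity and positive homogeneity of $\|\cdot\|$, $\rho$, $\gamma$ and $\rho_{ess}$ on the positive cone, exactly as recalled in the Preliminaries (your iteration $(A^{(t)})^{j}\le\|A\|_{\infty}^{j(t-1)}A^{j}$ combined with (\ref{esslim=inf}) handles $\rho_{ess}$ cleanly). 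The one point worth making explicit in a writeup is the meaning of $\|A\|_{\infty}$, which the present paper never defines: with your reading as the entrywise supremum $\sup_{i,j}a_{ij}$ --- finite because $a_{ij}=(Ae_j)_i\le\|Ae_j\|_L\le\|A\|$ for $L\in\mathcal{L}$ --- the proof is complete and the statement is well posed for every matrix covered by the hypotheses.
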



\bigskip

Let $\Sigma$ be a bounded set of bounded operators on a complex Banach space $L$.
For $m \ge 1$, let
$$\Sigma ^m =\{A_1A_2 \cdots A_m : A_i \in \Sigma\}.$$
The generalized spectral radius of $\Sigma$ is defined by
\be
\rho (\Sigma)= \limsup _{m \to \infty} \;[\sup _{A \in \Sigma ^m} \rho (A)]^{1/m}
\label{genrho}
\ee
and is equal to
$$\rho (\Sigma)= \sup _{m \in \NN} \;[\sup _{A \in \Sigma ^m} \rho (A)]^{1/m}.$$
The joint spectral radius of $\Sigma$ is defined by
\be
\hat{\rho}  (\Sigma)= \lim _{m \to \infty}[\sup _{A \in \Sigma ^m} \|A\|]^{1/m}.
\label{BW}
\ee
Similarly, the generalized essential spectral radius of $\Sigma$ is defined by
\be
\rho _{ess} (\Sigma)= \limsup _{m \to \infty} \;[\sup _{A \in \Sigma ^m} \rho _{ess} (A)]^{1/m}
\label{genrhoess}
\ee
and is equal to
$$\rho _{ess} (\Sigma)= \sup _{m \in \NN} \;[\sup _{A \in \Sigma ^m} \rho _{ess} (A)]^{1/m}.$$
The joint essential  spectral radius of $\Sigma$ is defined by
\be
\hat{\rho} _{ess}  (\Sigma)= \lim _{m \to \infty}[\sup _{A \in \Sigma ^m} \gamma (A)]^{1/m}.
\label{jointess}
\ee

It is well known that $\rho (\Sigma)= \hat{\rho}  (\Sigma)$ for a precompact nonempty set $\Sigma$ of compact operators on $L$ (see e.g. \cite{ShT00, ShT08, Mo}),
in particular for a bounded set of complex $n\times n$ matrices (see e.g. 
 \cite{SWP97, Dai11, MP12} and the references cited there).
This equality is called the Berger-Wang formula or also the
generalized spectral radius theorem. 
It is known that also the generalized Berger-Wang formula holds, i.e, that for any precompact nonempty  set $\Sigma$ of bounded operators on $L$ we have
$$\hat{\rho}  (\Sigma) = \max \{\rho (\Sigma), \hat{\rho} _{ess}  (\Sigma)\}$$
(see e.g.  \cite{ShT08, Mo, ShT00}). Observe also that it was proved in \cite{Mo} that in the definition of  $\hat{\rho} _{ess}  (\Sigma)$ one may replace the Haussdorf measure of noncompactness by several other seminorms, for instance it may be replaced by the essential norm.

In general $\rho (\Sigma)$ and $\hat{\rho}  (\Sigma)$ may differ even in the case of a bounded set $\Sigma$ of compact positive operators on $L$ (see \cite{SWP97} or also \cite{P17}).
Also, in \cite{Gui82} the reader can find an example of two positive non-compact weighted shifts $A$ and $B$ on $L=l^2$ such that $\rho(\{A,B\})=0 < \hat{\rho}(\{A,B\})$. As already noted in \cite{ShT00} also $\rho _{ess} (\Sigma)$ and $\hat{\rho} _{ess}  (\Sigma)$ may in general be different.

The theory of the generalized and the joint spectral radius has many important applications for instance to discrete and differential inclusions,
wavelets, invariant subspace theory
(see e.g. 
\cite{Dai11, Wi02, ShT00, ShT08} and the references cited there).
In particular, $\hat{\rho} (\Sigma)$ plays a central role in determining stability in convergence properties of discrete and differential inclusions. In this
theory the quantity $\log \hat{\rho} (\Sigma)$ is known as the maximal Lyapunov exponent (see e.g. \cite{Wi02}).

We will  use the following well known facts that hold for all $r \in \{\rho,  \hat{\rho}, \rho _{ess}, \hat{\rho} _{ess}  \}$:
\be
r (\Sigma  ^m) = r (\Sigma)^m \;\;\mathrm{and}\;\;
r (\Psi \Sigma) = r (\Sigma\Psi)
\label{again}
\ee
where $\Psi \Sigma =\{AB: A\in \Psi, B\in \Sigma\}$ and $m\in \NN$.

Let $\Psi _1, \ldots , \Psi _m$ be bounded sets of positive kernel operators on a Banach function space $L$ and let $\alpha _1, \ldots \alpha _m$ be positive numbers such that
$\sum _{i=1} ^m \alpha _i = 1$. Then the bounded set of positive kernel operators on $L$, defined by
$$\Psi _1 ^{( \alpha _1)} \circ \cdots \circ \Psi _m ^{(\alpha _m)}=\{ A_1 ^{( \alpha _1)} \circ \cdots \circ A _m ^{(\alpha _m)}: A_1\in \Psi _1, \ldots, A_m \in \Psi _m \},$$
is called the {\it weighted Hadamard (Schur) geometric mean} of sets $\Psi _1, \ldots , \Psi _m$. The set
$\Psi _1 ^{(\frac{1}{m})} \circ \cdots \circ \Psi _m ^{(\frac{1}{m})}$ is called the  {\it Hadamard (Schur) geometric mean} of sets $\Psi _1, \ldots , \Psi _m$. If $L\in \mathcal{L}$, $\sum _{i=1} ^m \alpha _i \ge 1$ and if $\Psi _1, \ldots , \Psi _m$ are bounded sets of nonnegative matrices that define operators on $L$, 
then  the set $\Psi _1 ^{( \alpha _1)} \circ \cdots \circ \Psi _m ^{(\alpha _m)}$
is a bounded set of   nonnegative matrices that define operators on $L$ by Theorem \ref{special_case}(ii). 
The following result that follows from Theorem \ref{DBPfs} was established in \cite[Theorem 3.3]{P17}, \cite[Theorems 3.1 and 3.8]{P19} and \cite[Theorem 2.5]{BP22b}.


\begin{theorem} Let $\Psi _1, \ldots , \Psi _m$ be bounded sets of positive kernel operators on a Banach function space $L$, let
 $\alpha _1, \ldots , \alpha _m$ be positive numbers and $n \in \NN$.
 
\noindent (i) If
$\sum _{i=1} ^m \alpha _i = 1$ and $r \in \{\rho, \hat{\rho}\}$, then
\be
r (\Psi _1 ^{( \alpha _1)} \circ \cdots \circ \Psi _m ^{(\alpha _m)} ) \le  r ((\Psi _1 ^n ) ^{( \alpha _1)} \circ \cdots \circ (\Psi _m ^n) ^{(\alpha _m)} ) ^{ \frac{1}{n}} \le
r(\Psi _1)^{ \alpha _1} \, \cdots r(\Psi _m)^{\alpha _m}
\label{gsh_ref}
\ee
and
\be
r \left(\Psi _1 ^{\left( \frac{1}{m} \right)} \circ \cdots \circ \Psi _m  ^{\left( \frac{1}{m} \right)} \right) \le r(\Psi _1 \Psi _2 \cdots \Psi _m) ^{\frac{1}{m}}.\;\;\;\;\;\;\;\;
\label{Hu_ess}
\ee
If, in addition, $L$ and $L^*$ have order continuous norms, then (\ref{gsh_ref}) and (\ref{Hu_ess}) hold also  for each $r\in \{ \rho _{ess}, \hat{\rho} _{ess}\}$.

\noindent (ii) If $L\in\mathcal L$, $\sum_{j=1}^{m}\alpha_j\ge1$, $r \in \{\rho, \hat{\rho}\}$ and 
if $\Psi, \Psi _1, \ldots , \Psi _m$ are bounded sets of nonnegative matrices that define operators on $L$, 
then Inequalities (\ref{gsh_ref}) hold.

In particular, if $t\ge1$, then
\be
r (\Psi^{(t)})\le r((\Psi^n)^{(t)})^{\frac{1}{n}}\le r(\Psi)^{t}.
\label{folge}
\ee
\label{powers}
\end{theorem}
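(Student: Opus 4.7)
The strategy is to reduce everything to the single-operator estimates of Theorems \ref{DBPfs} and \ref{special_case} via the Gelfand-type limit definitions (\ref{genrho})--(\ref{jointess}), using the pointwise domination of ordinary products of Hadamard weighted means by Hadamard weighted means of ordinary products. Set $\Sigma = \Psi_1^{(\alpha_1)} \circ \cdots \circ \Psi_m^{(\alpha_m)}$ and $\Sigma_n = (\Psi_1^n)^{(\alpha_1)} \circ \cdots \circ (\Psi_m^n)^{(\alpha_m)}$. For $A = C_1 \cdots C_n \in \Sigma^n$ with $C_i = A_{i,1}^{(\alpha_1)} \circ \cdots \circ A_{i,m}^{(\alpha_m)}$ and $A_{i,j} \in \Psi_j$, inequality (\ref{norm2}) of Theorem \ref{DBPfs} gives the pointwise bound
\[
A \le (A_{1,1} A_{2,1} \cdots A_{n,1})^{(\alpha_1)} \circ \cdots \circ (A_{1,m} A_{2,m} \cdots A_{n,m})^{(\alpha_m)} \in \Sigma_n.
\]
Splitting an arbitrary element of $\Sigma^{nN}$ into $N$ consecutive blocks of $n$ factors and iterating, I find that every element of $\Sigma^{nN}$ is pointwise dominated by an element of $\Sigma_n^N$. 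Since each $r \in \{\rho,\|\cdot\|,\gamma,\rho_{ess}\}$ is monotone on pairs of positive operators $0 \le A \le B$ (using $A^p \le B^p$ together with (\ref{esslim=inf}) for $\rho_{ess}$, and the order-continuity hypotheses on $L$ and $L^*$ for $\gamma$ and $\rho_{ess}$), taking suprema over $A$, extracting $(nN)$-th roots, and passing to (\ref{genrho})--(\ref{jointess}) produces the first inequality of (\ref{gsh_ref}): $r(\Sigma) \le r(\Sigma_n)^{1/n}$ for $r \in \{\rho, \hat\rho\}$, and for $r \in \{\rho_{ess}, \hat\rho_{ess}\}$ under the order-continuity assumption.

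For the outer inequality $r(\Sigma_n)^{1/n} \le r(\Psi_1)^{\alpha_1} \cdots r(\Psi_m)^{\alpha_m}$, I dominate an arbitrary $p$-fold product in $\Sigma_n^p$ once more by Theorem \ref{DBPfs} to obtain $F_1^{(\alpha_1)} \circ \cdots \circ F_m^{(\alpha_m)}$ with $F_j \in \Psi_j^{np}$, then apply Theorem \ref{special_case}(i) (inequalities (\ref{gl1nrm})--(\ref{gl1vecress})) to get either $r(F_1^{(\alpha_1)} \circ \cdots \circ F_m^{(\alpha_m)}) \le \prod_j r(F_j)^{\alpha_j}$ for $r \in \{\rho, \rho_{ess}\}$ or the analogous inequality $s(\cdot) \le \prod_j s(F_j)^{\alpha_j}$ for $s \in \{\|\cdot\|, \gamma\}$ feeding into $\hat\rho, \hat\rho_{ess}$. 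Combining with the Gelfand-type formulas applied to each $\Psi_j$ (so $\sup_{F \in \Psi_j^{np}} r(F) \le r(\Psi_j)^{np}$ when $r \in \{\rho, \rho_{ess}\}$, and $(\sup_{F \in \Psi_j^{np}} s(F))^{1/(np)} \to r(\Psi_j)$ when $r \in \{\hat\rho, \hat\rho_{ess}\}$), I obtain $r(\Sigma_n) \le r(\Psi_1)^{n\alpha_1} \cdots r(\Psi_m)^{n\alpha_m}$ in all four cases, and the $n$-th root finishes (\ref{gsh_ref}). Part (ii) follows by replacing Theorems \ref{DBPfs}(i) and \ref{special_case}(i) with their matrix counterparts (ii) (which admit $\sum \alpha_j \ge 1$); specializing $m = 1$ and $\alpha_1 = t \ge 1$ yields (\ref{folge}).

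For the Huang-type inequality (\ref{Hu_ess}), I combine the blocking scheme with the single-operator inequality (\ref{genHuBfsP}) (and its essential-spectral-radius analogue from \cite[Theorem 3.2]{P19} when $r \in \{\rho_{ess}, \hat\rho_{ess}\}$). For $A = C_1 \cdots C_N \in \Sigma^N$ with $\alpha_j = 1/m$, Theorem \ref{DBPfs} yields $A \le B_1^{(1/m)} \circ \cdots \circ B_m^{(1/m)}$ with $B_j \in \Psi_j^N$, and (\ref{genHuBfsP}) then gives $r(B_1^{(1/m)} \circ \cdots \circ B_m^{(1/m)}) \le r(B_1 \cdots B_m)^{1/m}$. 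The main obstacle is that $B_1 B_2 \cdots B_m$ lies naturally in $\Psi_1^N \Psi_2^N \cdots \Psi_m^N$ rather than in $(\Psi_1 \Psi_2 \cdots \Psi_m)^N$, and these sets generally differ. Following \cite[Theorem 3.3]{P17}, \cite[Theorems 3.1 and 3.8]{P19}, and \cite[Theorem 2.5]{BP22b}, the remedy is a \emph{Latin-square} rearrangement of the Hadamard factors inside each $C_i$ (legitimate because $\circ$ is commutative), chosen so that the $j$-th column of ordinary factors produced by Theorem \ref{DBPfs} lies in a cyclic shift of $\Psi_1 \Psi_2 \cdots \Psi_m$. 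The cyclic identity $r(\Psi \Sigma) = r(\Sigma \Psi)$ from (\ref{again}) then converts the resulting estimate into a bound involving $r(\Psi_1 \cdots \Psi_m)$ to the appropriate power, and a final application of (\ref{genrho})--(\ref{jointess}) delivers $r(\Sigma) \le r(\Psi_1 \cdots \Psi_m)^{1/m}$.
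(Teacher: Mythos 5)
Your proposal is correct and follows essentially the same route the paper relies on: Theorem \ref{powers} is quoted from \cite{P17,P19,BP22b} rather than reproved, but the argument there (and in the paper's own proof of the analogous essential result, Theorem \ref{finally_ess}) is exactly your scheme of dominating block products via (\ref{norm2}), invoking monotonicity of $\rho$, $\|\cdot\|$, $\gamma$, $\rho_{ess}$ together with the Gelfand-type formulas, and handling (\ref{Hu_ess}) by the cyclic rearrangement of Hadamard factors combined with $r(\Psi\Sigma)=r(\Sigma\Psi)$. No gaps noted.
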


\section{New inequalities for the Haussdorf measure of noncompactness and essential radius}

In this section we prove that the essential versions of Theorems \ref{thbegin}(ii), \ref{special_case}(ii)-(iii) and \ref{powers}(ii) hold under the assumption that $L$ and $L^*$ have order continuous norms. We will need the following lemma.

\begin{lemma}
Let $L\in \mathcal{L}$ have order continuous norm. Then for each $x\in L$ it holds that $x(i) \to 0$ as $i \to \infty$.
\label{property*}
\end{lemma}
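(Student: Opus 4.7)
The plan is to use the order continuity of the norm applied to the tails of $|x|$, and then compare each coordinate of $x$ against such a tail using the unit sequence vectors $e_i$. Since $R = \mathbb{N}$ is the relevant index set (otherwise the statement is vacuous), we may assume so throughout.

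Fix $x \in L$ and for each $n \in \mathbb{N}$ define the tail $f_n \in S(\mathbb{N})$ by $f_n(i) = |x(i)|$ if $i \ge n$ and $f_n(i)=0$ otherwise. Since $0 \le f_n \le |x|$ pointwise and $L$ is a Banach sequence space, $f_n \in L$. Moreover, $f_n$ is pointwise decreasing in $n$ with pointwise limit $0$, i.e.\ $f_n \downarrow 0$ in $M(\mathbb{N},\mu)$. By the order continuity of the norm on $L$, this forces $\|f_n\|_L \to 0$ as $n \to \infty$.

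Next, for any $i \ge n$, the sequence $|x(i)|\, e_i$ has its only nonzero entry at position $i$, where it equals $|x(i)| = f_n(i)$. Hence $|x(i)|\, e_i \le f_n$ pointwise, and the Banach sequence space property together with $\|e_i\|_L = 1$ yields
\[
|x(i)| = |x(i)|\,\|e_i\|_L = \bigl\| |x(i)|\, e_i \bigr\|_L \le \|f_n\|_L.
\]
Taking $\limsup_{i \to \infty}$ on the left gives $\limsup_{i\to\infty} |x(i)| \le \|f_n\|_L$, and then letting $n \to \infty$ and using Step~1 gives $\limsup_{i\to\infty} |x(i)| = 0$, i.e.\ $x(i) \to 0$.

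There is no real obstacle here; the only point that requires a mild bit of care is making sure the tails $f_n$ are honestly elements of $L$ (which follows immediately from $|f_n| \le |x|$ and the defining property of a Banach sequence space) and that the convergence $f_n \downarrow 0$ takes place in the sense required by the definition of order continuity (monotone a.e.\ decrease to $0$, which is automatic here since $\mu$ is the counting measure and pointwise decrease coincides with a.e.\ decrease).
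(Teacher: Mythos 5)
Your proof is correct and uses essentially the same argument as the paper: both work with the tail sequences $f_n$ (the paper's $x_k$), apply order continuity to conclude $\|f_n\|_L \to 0$, and compare $|x(i)|\,e_i$ against the tail using $\|e_i\|_L = 1$. The only difference is that the paper phrases the argument as a proof by contradiction while you give the direct contrapositive-free version; the substance is identical.
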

\begin{proof} Suppose there exists $x\in L$ such that the entries $x(i)$  do not converge to zero as $i \to \infty$. Then there exists $\varepsilon > 0$ such that there are infinitely many positive entries of $|x|$ that are greater than $\varepsilon$. For $k  \in \NN$ let $x_k (i)=0$ when $i\le k$ and $x_k(i)=|x|(i)$ otherwise.
Then $0\le x_k \downarrow 0$.  However, $\|x_k\|$ does not converge to zero, since we have
 $\|x_k\| \ge \||x|(i) \cdot e_i\| =|x(i)| >\varepsilon$
  for infinitely many $i>k$.
\end{proof}
First we establish the essential version of Theorem \ref{special_case}(iii).
\begin{theorem}
\label{ess_needed}
Let $L\in \mathcal{L}$ such that 
$L$ and $L^*$ have order continuous norms. 
Let $t\ge 1$ and let $A, A_1, \ldots , A_m$ be  nonnegative matrices that define operators on $L$.  
Then 
\be
\gamma (A^{(t)}) \le \gamma (A)^t,
\label{newH}
\ee
\be
\rho_{ess} (A^{(t)}) \le \rho_{ess} (A)^t,
\label{new_ess}
\ee
\be
\gamma (A_1^{(t)}\cdots A_m^{(t)})\le \gamma (A_1\cdots A_m)^{t},
\label{newH2}
\ee
\be
\rho_{ess}(A_1^{(t)}\cdots A_m^{(t)})\le\rho_{ess} (A_1\cdots A_m)^{t}.
\label{new_ess2}
\ee
\end{theorem}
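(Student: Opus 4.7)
All four inequalities reduce to (\ref{newH}). Given (\ref{newH}), inequality (\ref{newH2}) follows from (\ref{gl1t}) and monotonicity of $\gamma$ by applying (\ref{newH}) to $A_1\cdots A_m$: $\gamma(A_1^{(t)}\cdots A_m^{(t)})\le\gamma((A_1\cdots A_m)^{(t)})\le\gamma(A_1\cdots A_m)^t$. For (\ref{new_ess}), combine (\ref{esslim=inf}) with the consequence $(A^{(t)})^j\le(A^j)^{(t)}$ of (\ref{gl1t}): then $\gamma((A^{(t)})^j)\le\gamma(A^j)^t$, and taking $j$-th roots as $j\to\infty$ gives the claim. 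Inequality (\ref{new_ess2}) follows in the same way from $(A_1^{(t)}\cdots A_m^{(t)})^j\le((A_1\cdots A_m)^j)^{(t)}$, obtained by applying (\ref{gl1t}) to the $mj$-fold product.

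\textbf{Proof of (\ref{newH}).} The idea is to split $A$ into a finite-rank piece and a remainder with small entries, the split being possible thanks to the key lemma stated below. Assuming the lemma, fix $\varepsilon>0$, set $c=\gamma(A)+\varepsilon$, and write $A=B+C$ entrywise, where $B$ retains the (finitely many) entries exceeding $c$ and $C$ retains the rest. Then $B,C\ge 0$ have disjoint supports, so $A^{(t)}=B^{(t)}+C^{(t)}$; the matrix $B$ is of finite rank (and so is $B^{(t)}$), hence $\gamma(B^{(t)})=0$; and $\|C\|_\infty\le c$. Subadditivity of $\gamma$ gives $\gamma(C)\le\gamma(A)$, so (\ref{dobra_gamma}) applied to $C$ yields
$$\gamma(A^{(t)})\le\gamma(C^{(t)})\le\|C\|_\infty^{t-1}\gamma(C)\le(\gamma(A)+\varepsilon)^{t-1}\gamma(A),$$
and letting $\varepsilon\to 0^+$ proves (\ref{newH}).

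\textbf{The key lemma (main obstacle).} I claim that for every $\varepsilon>0$, only finitely many entries $a_{ij}$ of $A$ satisfy $a_{ij}>\gamma(A)+\varepsilon$. Suppose for contradiction there are infinitely many distinct pairs $(i_k,j_k)$ with $a_{i_k,j_k}>c:=\gamma(A)+\varepsilon$. Row $i$ of $A$ equals $A^*e_i\in L^*$ (since $A^*$ is bounded on $L^*$ and $\|e_i\|_{L^*}=1$ as $L\in\mathcal{L}$), so Lemma \ref{property*} applied to $L^*$ forces its entries to vanish at infinity, whence each row has only finitely many entries above $c$. Therefore $(i_k)$ is unbounded; passing to a subsequence, $i_k\to\infty$. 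Choose $\delta\in(\gamma(A),c)$ and, by definition of $\gamma$, a finite $M\subset L$ with $A(D_L)\subset M+\delta D_L$. Write $Ae_{j_k}=m_k+r_k$ with $m_k\in M$ and $\|r_k\|_L\le\delta$; pigeonhole on the finite set $M$ yields some $m\in M$ with $m_k=m$ for infinitely many $k$. For these $k$,
$$|r_k(i_k)|=|a_{i_k,j_k}-m(i_k)|\ge c-m(i_k),$$
and since $m\in L$, Lemma \ref{property*} applied to $L$ gives $m(i_k)\to 0$. Hence $\|r_k\|_L\ge|r_k(i_k)|\to c$, forcing $\delta\ge c$, a contradiction. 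This step is where both order continuity hypotheses are used crucially: order continuity of $L^*$ makes the rows of $A$ vanish at infinity, and order continuity of $L$ makes the approximating elements $m\in L$ vanish at infinity.
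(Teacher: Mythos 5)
Your proof is correct, and your reductions of (\ref{new_ess}), (\ref{newH2}) and (\ref{new_ess2}) to (\ref{newH}) are exactly the ones the paper uses. For (\ref{newH}) itself the overall strategy is the same — push the large entries of $A$ into a finite-rank (hence $\gamma$-null) piece and dominate the Hadamard power of what remains entrywise — but your key lemma and decomposition differ from the paper's. The paper only shows that the entries exceeding (roughly) $\delta^2\gamma(A)$ are confined to finitely many \emph{rows}: this falls out immediately from the finite covering set $U$ in the definition of $\gamma(A)$ together with Lemma \ref{property*} applied to the finitely many elements of $U$ in $L$, and those rows are then deleted by a coordinate projection $P_I$, leaving the entrywise bound $\delta^{-2t}Q_IA^{(t)}\le A$. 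You prove the stronger statement that only finitely many \emph{entries} exceed $\gamma(A)+\varepsilon$, which costs you an extra pigeonhole/contradiction argument and an extra ingredient — order continuity of $L^*$, used so that the rows $A^*e_i$ vanish at infinity — and you then split $A=B+C$ entrywise and quote (\ref{dobra_gamma}) for $C$ instead of re-deriving the entrywise domination. Both routes are valid; yours establishes a slightly sharper structural fact about $A$ at the price of a longer lemma, while the paper extracts the needed finiteness almost for free from the covering set. Two cosmetic points: the estimate should read $|a_{i_k,j_k}-m(i_k)|\ge c-|m(i_k)|$, and you should justify in a line that Lemma \ref{property*} applies to $L^*$ (namely that $L^*$, identified with the K\"othe dual since $L$ has order continuous norm, is a Banach sequence space in $\mathcal{L}$ with $\|e_i\|_{L^*}=1$); neither affects the argument.
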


\begin{proof}
First we prove (\ref{newH}). If $\gamma (A)=0$, then $\gamma (A^{(t)})=0$ by (\ref{dobra_gamma}).
We may assume that $t>1$. We may also assume that $\gamma (A)=1$ since $\gamma (\cdot)$ is positively homogeneous.
 Having $\gamma(A) = 1$ means that for any $\delta > 1$, there is a finite set $U \subset L$ 
such that the image $A(D_L)$ of the closed unit ball $D_L$  is contained in the union $\bigcup_{u \in U} (u+\delta D_L)$.  Since $U$ is a finite set in $L$, then by Lemma \ref{property*} 
 there are only finitely many entries $i$ such that $\max_{u \in U} |u_i| > \delta^2-\delta$. Let $I$ denote this set of indices. For all other indices $i \notin I$, we must have $(Ax)_i \le \max |u_i| + \delta \le \delta^2$ for all $x \in D_L$, $x\ge 0$.  In particular, $A_{ij} = (Ae_j)_i \le \delta^2$ for all $j$ and all $i \notin I$.

Then $\delta^{-2t} A_{ij}^t \le A_{ij}$ for all $i \notin I$, $j \in \mathbb{N}$ and $t > 1$.  This means that $\delta^{-2t} A^{(t)} _i \le A _i$ for all rows $A_i$ such that $i \notin I$. 
Let $P_I$ be the orthogonal projection onto $\mathrm{span} \{e_i : i \in I\}$.  Then $P_I A^{(t)}$ is compact since it has finite dimensional range, and if $Q_I = \mathrm{id} - P_I$, then $\delta^{-2t}Q_I A^{(t)} \le A$ and $\delta^{-2t} \gamma(A^{(t)}) =  \delta^{-2t} \gamma(Q_I A^{(t)}) \le \gamma(A) = 1$ (since $\gamma(\cdot)$ is invariant under compact perturbations and since it is monotone). Then $\gamma(A^{(t)}) \le \delta^{2t}$.  Since $\delta > 1$ can be chosen arbitrarily close to 1, we conclude that $\gamma(A^{(t)}) \le 1$ for all $t > 1$. This proves (\ref{newH}).

Inequality (\ref{newH2}) follows from (\ref{gl1t}), monotonicity of $\gamma(\cdot)$  and   (\ref{newH}). Inequality (\ref{new_ess2}) follows from (\ref{esslim=inf}) and  (\ref{newH2}) since 
$$\rho _{ess} (A) =\lim _{j \to \infty} \gamma ((A_1^{(t)}\cdots A_m^{(t)})^j)^{1/j}\le \lim _{j \to \infty} \gamma ((A_1 \cdots A_m)^j)^{t/j}=\rho_{ess} (A_1\cdots A_m)^{t}.$$
 Inequality  (\ref{new_ess}) is a special case of  (\ref{new_ess2}).
\end{proof}
\begin{remark}{\rm Observe that Theorem \ref{ess_needed} is not a special case of \cite[Lemma 4.2]{P06} since for each $i$ and $j$ we have $\gamma (E_{ij})=0$, where $E_{ij}$ denotes the infinite matrix with $1$ and at the $ij$th coordinate and with $0$ elsewhere.
}
\end{remark}
Applying standard techniques used also in \cite{DP05} and \cite{P06} we establish the essential versions of Theorems \ref{special_case}(ii) and  \ref{thbegin}(ii).


\begin{theorem}
\label{special_case_ess}
Let $L\in \mathcal{L}$ such that 
$L$ and $L^*$ have order continuous norms. 
Assume $A_1, \ldots , A_m$ are  nonnegative matrices that define operators on $L$ and let $\alpha _1, \ldots, \alpha _m$ be positive numbers  such that $s_m= \sum_{j=1}^{m} \alpha _j\ge 1$. Then inequalities
(\ref{gl1meas_nonc}) and (\ref{gl1vecress}) hold.
\end{theorem}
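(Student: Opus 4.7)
The plan is to reduce the case $s_m \ge 1$ to the already-handled case $\sum \beta_j = 1$ via the Hadamard-power trick, then combine Theorem \ref{ess_needed} with Theorem \ref{special_case}(i).

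First I would set $t := s_m \ge 1$ and $\beta_j := \alpha_j / t$, so that $\sum_{j=1}^m \beta_j = 1$ and $\alpha_j = t \beta_j$. A direct entrywise computation gives the factorization
\begin{equation*}
A_1^{(\alpha_1)} \circ \cdots \circ A_m^{(\alpha_m)} \;=\; \bigl(A_1^{(\beta_1)} \circ \cdots \circ A_m^{(\beta_m)}\bigr)^{(t)}.
\end{equation*}
Since $\sum \beta_j = 1$, the operator $B := A_1^{(\beta_1)} \circ \cdots \circ A_m^{(\beta_m)}$ is a positive kernel operator defined on all of $L$ (it is dominated pointwise by the weighted arithmetic mean $\sum \beta_j A_j$ by AM--GM), so $B$ is a nonnegative matrix defining an operator on $L$ to which Theorem \ref{ess_needed} applies.

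Next I would apply inequality (\ref{newH}) from Theorem \ref{ess_needed} with exponent $t \ge 1$ to $B$, obtaining $\gamma(B^{(t)}) \le \gamma(B)^t$. Then, since $\sum \beta_j = 1$, Theorem \ref{special_case}(i) inequality (\ref{gl1meas_nonc}) applies directly to $B$ and yields $\gamma(B) \le \gamma(A_1)^{\beta_1} \cdots \gamma(A_m)^{\beta_m}$. Chaining these two estimates and using $t\beta_j = \alpha_j$ gives
\begin{equation*}
\gamma\bigl(A_1^{(\alpha_1)} \circ \cdots \circ A_m^{(\alpha_m)}\bigr) \;\le\; \bigl(\gamma(A_1)^{\beta_1} \cdots \gamma(A_m)^{\beta_m}\bigr)^t \;=\; \gamma(A_1)^{\alpha_1} \cdots \gamma(A_m)^{\alpha_m},
\end{equation*}
which is (\ref{gl1meas_nonc}). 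The essential spectral radius inequality (\ref{gl1vecress}) is proved identically: replace (\ref{newH}) by (\ref{new_ess}) to get $\rho_{ess}(B^{(t)}) \le \rho_{ess}(B)^t$, and replace (\ref{gl1meas_nonc}) by (\ref{gl1vecress}) from Theorem \ref{special_case}(i) applied to $B$ with weights $\beta_j$.

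There is no serious obstacle here, because all the hard analytic work has been done in Theorem \ref{ess_needed} (where the pigeonhole-style row truncation argument handles the passage to Hadamard powers) and in Theorem \ref{special_case}(i) (the $\sum \alpha_j = 1$ case). The only thing to verify carefully is the entrywise factorization identity and the fact that $B$ is well-defined as a matrix operator on $L$, both of which are routine.
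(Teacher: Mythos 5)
Your proposal is correct and follows essentially the same route as the paper's own proof: normalize the weights to $\beta_j=\alpha_j/s_m$, use the identity $A_1^{(\alpha_1)}\circ\cdots\circ A_m^{(\alpha_m)}=\bigl(A_1^{(\beta_1)}\circ\cdots\circ A_m^{(\beta_m)}\bigr)^{(s_m)}$, and then chain inequality (\ref{newH}) (resp.\ (\ref{new_ess})) from Theorem \ref{ess_needed} with Theorem \ref{special_case}(i). Your extra remark that $B$ is well defined as an operator on $L$ via the AM--GM domination is a point the paper leaves implicit, but otherwise the two arguments coincide.
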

\begin{proof} For $j=1,  \ldots , m$ define $\beta _j = \frac{\alpha _j}{s_m}$ and so $\sum_{j=1}^{m} \beta _j =1$. Then by (\ref{newH}) and Theorem \ref{special_case}(i) we have
$$\gamma (A_1 ^{( \alpha _1)}  \circ \cdots \circ A_m ^{(\alpha _m)} )= \gamma \left( \left(A_1 ^{( \beta _1)} \circ \cdots \circ A_m ^{(\beta _m)} \right)^{(s_m)}\right)
\le  \gamma  \left(A_1 ^{( \beta _1)} \circ  \cdots \circ A_m ^{(\beta _m)} \right)^{s_m}$$
$$\le  \left(\gamma(A_1)^{ \beta _1} \cdots \gamma(A_m)^{\beta _m}\right)^{s_m}= \gamma(A_1)^{ \alpha _1}  \gamma(A_2)^{\alpha _2} \cdots \gamma(A_m)^{\alpha _m},$$
which proves (\ref{gl1meas_nonc}) under our assumptions. Similarly, (\ref{gl1vecress}) follows from (\ref{new_ess}) and  Theorem \ref{special_case}(i) .
\end{proof}
\begin{theorem}
\label{thbegin_ess}
Let $L\in \mathcal{L}$ such that 
$L$ and $L^*$ have order continuous norms. 
Assume $\{A_{i j}\}_{i=1, j=1}^{k, m}$ are  nonnegative matrices that define operators on $L$ and let $\alpha _1, \ldots, \alpha _m$ be positive numbers  such that $s_m= \sum_{j=1}^{m} \alpha _j\ge 1$. Then
for $A$ from (\ref{osnovno}) inequalities
(\ref{meas_noncomp}) and (\ref{ess_spectral}) hold.
\end{theorem}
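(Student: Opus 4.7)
The plan is to reduce Theorem \ref{thbegin_ess} to two ingredients that are already in hand: the operator inequality (\ref{norm2}) from Theorem \ref{thbegin}(ii), which holds on any $L\in\mathcal L$ under exactly the hypothesis $s_m\ge 1$ (with no order-continuity assumption needed for that step), and the just-proved Theorem \ref{special_case_ess}, which handles the single Hadamard weighted geometric mean when the weights sum to at least $1$.

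Set $B_j := A_{1j}A_{2j}\cdots A_{kj}$ for $j=1,\ldots,m$, and write
\[
D := B_1^{(\alpha_1)} \circ B_2^{(\alpha_2)} \circ \cdots \circ B_m^{(\alpha_m)}.
\]
First I would invoke the operator comparison (\ref{norm2}) of Theorem \ref{thbegin}(ii) to conclude $0 \le A \le D$ (this is exactly the content of (\ref{norm2}) with the present indexing). Monotonicity of $\gamma$ on positive operators, recorded in the Preliminaries, then yields the first half of (\ref{meas_noncomp}):
\[
\gamma(A) \le \gamma(D).
\]
For the second half of (\ref{meas_noncomp}) I would apply Theorem \ref{special_case_ess} to the operators $B_1,\ldots,B_m$ with weights $\alpha_1,\ldots,\alpha_m$ (whose sum is $s_m\ge 1$), obtaining
\[
\gamma(D) \le \gamma(B_1)^{\alpha_1}\cdots\gamma(B_m)^{\alpha_m},
\]
which is exactly the desired bound. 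Chaining the two inequalities gives (\ref{meas_noncomp}).

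For (\ref{ess_spectral}) the argument is the same once one observes that $\rho_{ess}$ is monotone on positive operators in the present setting: from $0\le A\le D$ one has $0\le A^j\le D^j$ for every $j\in\NN$, whence $\gamma(A^j)\le\gamma(D^j)$ by monotonicity of $\gamma$, and then (\ref{esslim=inf}) gives $\rho_{ess}(A)\le \rho_{ess}(D)$. The companion inequality $\rho_{ess}(D)\le \rho_{ess}(B_1)^{\alpha_1}\cdots\rho_{ess}(B_m)^{\alpha_m}$ is the essential-spectral-radius half of Theorem \ref{special_case_ess}. Combining these two bounds yields (\ref{ess_spectral}).

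I do not anticipate any real obstacle, since the hard work has been done upstream: Theorem \ref{ess_needed} was needed to absorb Hadamard powers with exponent $\ge 1$ into $\gamma$ (and then $\rho_{ess}$), and that fact was already used to upgrade the unit-sum case of Theorem \ref{special_case}(i) to the subunitary-weights case in Theorem \ref{special_case_ess}. The only thing to be careful about is that $D$ itself is a Hadamard weighted geometric mean with weights summing to $s_m\ge 1$ rather than to $1$, which is precisely why Theorem \ref{special_case_ess} (and not merely Theorem \ref{special_case}(i)) must be invoked in the last step.
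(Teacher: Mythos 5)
Your proposal is correct and follows exactly the paper's own argument: the domination $A\le D$ from (\ref{norm2}), monotonicity of $\gamma$ and $\rho_{ess}$, and an application of Theorem \ref{special_case_ess} to the products $B_j=A_{1j}\cdots A_{kj}$. Your explicit justification of the monotonicity of $\rho_{ess}$ via $\gamma(A^j)\le\gamma(D^j)$ and (\ref{esslim=inf}) is a detail the paper leaves implicit, but the route is the same.
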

\begin{proof} Inequalities (\ref{meas_noncomp}) and (\ref{ess_spectral}) under our assumptions follow from (\ref{norm2}) in Theorem \ref{DBPfs}(ii), monotonicity of $\gamma (\cdot)$ and $\rho_{ess} (\cdot)$ and from Theorem \ref{special_case_ess}.
\end{proof}


%

The following result on the joint and generalized essential radius of bounded sets of infinite nonnegative matrices generalizes Theorem \ref{thbegin_ess} and is an essential version of 
 \cite[Theorem 3.3(ii)]{BP22b} (in the case $\sum_{j=1}^{m} \alpha _j= 1$ it is known by  \cite[Theorem 3.3(i)]{BP22b}).  It is proved by combining ideas from the proofs of \cite[Corollary 5.3]{P06}, \cite[Theorem 3.8]{P17+} and \cite[Theorem 3.3]{BP22b}. 

\begin{theorem}
\label{finally_ess}
Let $L\in \mathcal{L}$ such that 
$L$ and $L^*$ have order continuous norms. 
 Assume $\alpha _1, \ldots , \alpha _m$ are  positive numbers such that $\sum_{j=1}^{m} \alpha _j\ge 1$ and let $n \in \NN$.  Let  $r\in \{ \rho _{ess}, \hat{\rho} _{ess}\}$  and let $\Psi_{1}, \ldots , \Psi _m$ and
 $\{\Psi_{i j}\}_{i=1, j=1}^{k, m}$ be bounded sets of nonnegative matrices that define positive operators on $L$. Then
\be
r (\Psi _1 ^{( \alpha _1)} \circ \cdots \circ \Psi _m ^{(\alpha _m)} ) \le  r ((\Psi _1 ^n ) ^{( \alpha _1)} \circ \cdots \circ (\Psi _m ^n) ^{(\alpha _m)} ) ^{ \frac{1}{n}} \le
r(\Psi _1)^{ \alpha _1} \, \cdots r(\Psi _m)^{\alpha _m}.
\label{gsh_ref_ess}
\ee
and
\begin{eqnarray}
\nonumber
& & r \left(\left(\Psi_{1 1}^{(\alpha _1)} \circ \cdots \circ \Psi_{1 m}^{(\alpha _m)}\right) \ldots \left(\Psi_{k 1}^{(\alpha _1)} \circ \cdots \circ \Psi_{k m}^{(\alpha _m)} \right) \right) \\
\nonumber
& \le &r \left((\Psi_{1 1} \cdots  \Psi_{k 1})^{(\alpha _1)} \circ \cdots
\circ (\Psi_{1 m} \cdots \Psi_{k m})^{(\alpha _m)}\right)\\
\nonumber
& \le &r \left(((\Psi_{1 1} \cdots  \Psi_{k 1})^n)^{(\alpha _1)} \circ \cdots
\circ ((\Psi_{1 m} \cdots \Psi_{k m})^n)^{(\alpha _m)}\right)^{\frac{1}{n}}\\
&\le&r \left( \Psi_{1 1} \cdots  \Psi_{k 1} \right)^{\alpha _1} \cdots
r \left( \Psi_{1 m} \cdots \Psi_{k m}\right)^{\alpha _m} .
\label{lepa_ess}
\end{eqnarray}
In particular, if $\Psi _1, \ldots, \Psi_k$ are bounded sets of nonnegative matrices that define positive operators on $L$ and
 $t\ge1$, then
\be
r (\Psi _1 ^{(t)} \cdots \Psi _k ^{(t)})\le r ((\Psi _1 \cdots \Psi _k) ^{(t)}) \le r(((\Psi _1 \cdots \Psi _k)^n)^{(t)})^{\frac{1}{n}}\le r(\Psi _1 \cdots \Psi _k)^{t}.
\label{with_t_ess}
\ee 
\end{theorem}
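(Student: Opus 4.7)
The plan is to adapt the strategy of \cite[Theorem~3.3(ii)]{BP22b} and Theorem \ref{powers}(ii), replacing the single-operator inputs for $\rho$ and $\|\cdot\|$ by the essential analogues established in Theorems \ref{thbegin_ess} and \ref{special_case_ess}. The three ingredients I will use are: those single-operator theorems; the multiplicativity $r(\Sigma^n) = r(\Sigma)^n$ from (\ref{again}); and a monotonicity principle for $r \in \{\rho_{ess}, \hat{\rho}_{ess}\}$ on bounded sets of positive operators, namely that $r(\Sigma_1) \le r(\Sigma_2)$ whenever every element of $\Sigma_1$ is dominated by some element of $\Sigma_2$. This principle follows by iterating the pointwise comparison (composition of positive operators preserves order) to get $A^j \le B^j$ whenever $0\le A \le B$, combining the monotonicity of $\gamma$ on positive operators with (\ref{esslim=inf}) to get $\rho_{ess}(A) \le \rho_{ess}(B)$, and then taking the appropriate supremum or limit inside (\ref{genrhoess}) and (\ref{jointess}).

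To prove (\ref{gsh_ref_ess}), I set $\Sigma := \Psi_1^{(\alpha_1)} \circ \cdots \circ \Psi_m^{(\alpha_m)}$ and $\Sigma' := (\Psi_1^n)^{(\alpha_1)} \circ \cdots \circ (\Psi_m^n)^{(\alpha_m)}$; both are bounded sets of positive operators on $L$ by Theorem \ref{special_case}(ii). Given any element of $\Sigma^{nk} = (\Sigma^n)^k$, I group its $nk$ factors into $k$ consecutive blocks of length $n$ and apply (\ref{norm2}) inside each block to produce a dominating element of $(\Sigma')^k$. The monotonicity principle together with $r(\Sigma^n) = r(\Sigma)^n$ then gives $r(\Sigma)^n \le r(\Sigma')$, which is the first inequality of (\ref{gsh_ref_ess}). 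For the second inequality, any element of $(\Sigma')^k$ has the form $C_1 \cdots C_k$ with $C_i = A_{i1}^{(\alpha_1)} \circ \cdots \circ A_{im}^{(\alpha_m)}$ and $A_{ij} \in \Psi_j^n$, and Theorem \ref{thbegin_ess} bounds the $\eta$-value of this product by $\prod_{j=1}^m \eta(A_{1j} \cdots A_{kj})^{\alpha_j}$ for $\eta \in \{\gamma, \rho_{ess}\}$. The supremum over the doubly-indexed family $\{A_{ij}\}$ factors across $j$, and using $(\Psi_j^n)^k = \Psi_j^{nk}$ each factor equals $\sup_{B \in \Psi_j^{nk}} \eta(B)$. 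Taking the $1/k$-th power, rewriting the exponent as $\alpha_j/k = n\alpha_j \cdot 1/(nk)$, and then passing to the limit $k \to \infty$ (for $\hat{\rho}_{ess}$, via (\ref{jointess})) or to the supremum over $k$ (for $\rho_{ess}$, via (\ref{genrhoess}) and (\ref{again})) produces $r(\Sigma') \le \prod_{j=1}^m r(\Psi_j)^{n\alpha_j}$; extracting the $n$-th root completes (\ref{gsh_ref_ess}).

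The chain (\ref{lepa_ess}) reduces to the same machinery. Its first inequality is (\ref{norm2}) combined with the monotonicity principle, while the remaining three inequalities are exactly (\ref{gsh_ref_ess}) applied to the bounded sets $\widetilde{\Psi}_j := \Psi_{1j} \Psi_{2j} \cdots \Psi_{kj}$ in place of $\Psi_j$. Inequality (\ref{with_t_ess}) is obtained by setting $\Psi := \Psi_1 \cdots \Psi_k$: its first inequality follows from the elementwise bound (\ref{gl1t}) together with the monotonicity principle, and the rest is the case $m = 1$, $\alpha_1 = t$ of (\ref{gsh_ref_ess}) applied to $\Psi$. I expect the main obstacle to be the careful bookkeeping of the $k$-fold supremum in the preceding paragraph, in particular verifying that the rearrangement $\alpha_j/k = n\alpha_j \cdot 1/(nk)$ is precisely what produces the factor $r(\Psi_j)^{n\alpha_j}$ in the limit, and that the identity $(\Psi_j^n)^k = \Psi_j^{nk}$ correctly aligns the index ranges so that the limit or supremum recovers $r(\Psi_j)$ on the right-hand side.
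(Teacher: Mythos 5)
Your proposal is correct and follows essentially the same route as the paper's proof: the elementwise application of Theorem \ref{thbegin_ess} to products drawn from the sets (the paper's inequality (\ref{origin})), the domination of block products via (\ref{norm2}) combined with monotonicity of $\gamma$ and $\rho_{ess}$ under positive domination, and the reduction of the remaining inequalities to (\ref{gsh_ref_ess}) via (\ref{again}). The only cosmetic difference is that you isolate the monotonicity step as a named principle for sets, whereas the paper applies it inline at the level of individual products; the substance is identical.
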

\begin{proof}  
First we prove the inequality
\be
r (\Psi _1 ^{( \alpha _1)} \circ \cdots \circ \Psi _m ^{(\alpha _m)} ) \le 
r(\Psi _1)^{ \alpha _1} \, \cdots r(\Psi _m)^{\alpha _m}.
\label{origin}
\ee
Let $A \in (\Psi _1 ^{( \alpha _1)} \circ \cdots \circ \Psi _m ^{(\alpha _m)})^l$, $l \in \NN$. Then there are 
$A_{ik} \in \Psi _k$, $i=1, \ldots ,l$, $k=1,\ldots , m$ such that
$$A=(A_{11} ^{\alpha _1} \circ \cdots \circ A_{1m} ^{\alpha _m}) \cdots (A_{l1} ^{\alpha _1} \circ \cdots \circ A_{lm} ^{\alpha _m}).$$
By  Theorem \ref{thbegin_ess} 
we have 
\be
\gamma (A) \le \gamma (A_{11} \cdots A_{l1})^{\alpha _1} \cdots \gamma (A_{1m} \cdots A_{lm})^{\alpha _m},
\nonumber
\ee
\be
\rho _{ess}(A) \le \rho _{ess} (A_{11} \cdots A_{l1})^{\alpha _1} \cdots \rho _{ess} (A_{1m} \cdots A_{lm})^{\alpha _m}.
\label{useg}
\nonumber
\ee
Since $A_{1k} \cdots A_{lk} \in \Psi_k ^l$ for all $k=1,\ldots , m$, (\ref{origin}) follows. 

To prove the first inequality in (\ref{lepa_ess}) let $l\in \NN$ and 
$$B \in \left(\left(\Psi_{1 1}^{(\alpha _1)} \circ \cdots \circ \Psi_{1 m}^{(\alpha _m)}\right) \ldots \left(\Psi_{k 1}^{(\alpha _1)} \circ \cdots \circ \Psi_{k m}^{(\alpha _m)} \right)\right)^l.$$
Then $B=A_1 \cdots A_l$, where for each $i=1, \ldots , l$ we have
$$A_i = \left(A_{i1 1}^{(\alpha _1)} \circ \cdots \circ A_{i 1 m}^{(\alpha _m)}\right) \ldots \left(A_{i k 1}^{(\alpha _1)} \circ \cdots \circ A_{i k m}^{(\alpha _m)} \right),$$
where $A_{i1 1} \in \Psi_{1 1}, \ldots , A_{i1 m} \in \Psi_{1 m}, \ldots , A_{ik 1} \in \Psi_{k 1}, \ldots , A_{ikm} \in \Psi_{km}. $ 
Then by (\ref{norm2}) for each $i=1, \ldots , l$ we have
$$A_i \le C_i := (A_{i11}A_{i21} \cdots A_{ik1} )^{(\alpha_1)} \circ \cdots \circ (A_{i1m}A_{i2m} \cdots A_{ikm} )^{(\alpha_m)}, $$
where $C_i \in (\Psi_{1 1} \cdots  \Psi_{k 1})^{(\alpha _1)} \circ \cdots
\circ (\Psi_{1 m} \cdots \Psi_{k m})^{(\alpha _m)}.$ Therefore 
$$B \le C:=C_1 \cdots C_l \in \left((\Psi_{1 1} \cdots  \Psi_{k 1})^{(\alpha _1)} \circ \cdots
\circ (\Psi_{1 m} \cdots \Psi_{k m})^{(\alpha _m)}\right)^l,$$
$\rho_{ess} (B)^{1/l}\le \rho_{ess} (C)^{1/l}$ and $\gamma(B)^{1/l}\le \gamma(C)^{1/l}$, which implies the first inequality in (\ref{lepa_ess}).

The  first inequality in (\ref{gsh_ref_ess}) follows from the  first inequality in (\ref{lepa_ess}) and (\ref{again}), since
$$r (\Psi _1 ^{( \alpha _1)} \circ \cdots \circ \Psi _m ^{(\alpha _m)} ) = r \left( \left(\Psi _1 ^{( \alpha _1)} \circ \cdots \circ \Psi _m ^{(\alpha _m)} \right)^n \right)^{\frac{1}{n}}  $$
$$= r ((\Psi _1 ^{( \alpha _1)} \circ \cdots \circ \Psi _m ^{(\alpha _m)})\cdots (\Psi _1 ^{( \alpha _1)} \circ \cdots \circ \Psi _m ^{(\alpha _m)}))^{\frac{1}{n}} $$
$$\le  r ((\Psi _1 ^n ) ^{( \alpha _1)} \circ \cdots \circ (\Psi _m ^n) ^{(\alpha _m)} ) ^{ \frac{1}{n}}.$$
The second inequality in (\ref{gsh_ref_ess}) follows from (\ref{origin}) and (\ref{again}). 
The second and third inequality in (\ref{lepa_ess}) follow from  (\ref{gsh_ref_ess}). Inequalities 
(\ref{with_t_ess}) are a special case of (\ref{lepa_ess}).
\end{proof}

\begin{remark}{\rm Under the assumptions of Theorem \ref{thbegin_ess} an analogue of \cite[Theorem 3.4]{BP21} is valid. The proof runs by following the same lines as in the proof of this result. The details are omitted.
}
\end{remark}

\section{Further results}

By applying results of the previous section we obtain several results that are essential versions of known relatively recent results from the literature. Since the proofs are similar to the existing proofs we mostly omit them  to avoid too much repetition of ideas.

Recall that for nonnegative measurable functions $\{f _{i j}\}_{i=1, j=1}^{k, m}$ and for nonnegative numbers  $\alpha_j$, $j=1, \ldots , m$,   such that $\sum _{j=1} ^m \alpha _j \ge 1$ (see e.g. \cite{Mi}, \cite{BP22b}) we have 
\be
(f_{11}^{\alpha _1} \cdots f_{1m}^{\alpha _m})+\cdots+(f_{k1}^{\alpha _1} \cdots f_{km}^{\alpha _m}) \le (f_{11}+\cdots+f_{k1})^{\alpha _1} \cdots (f_{1m}+\cdots+f_{km})^{\alpha _m}.
\label{mitr2}
\ee
The sum of bounded sets $\Psi$ and $\Sigma$ is a bounded set defined by
$\Psi + \Sigma =\{A+B: A \in \Psi, B \in \Sigma\}$. The following result is an essential version of 
 \cite[Theorem 3.7(ii)]{BP22b} (in the case $\sum_{j=1}^{m} \alpha _j= 1$ it is known by  \cite[Theorem 3.7(i)]{BP22b}).  It is proved in a similar way as  \cite[Theorem 3.7]{BP22b} (it follows from (\ref{mitr2}) and (\ref{gsh_ref_ess})). 
\begin{theorem}
\label{finally2_ess}
Let $L\in \mathcal{L}$ such that 
$L$ and $L^*$ have order continuous norms. 
 Assume $\alpha _1, \ldots , \alpha _m$ are  positive numbers such that $\sum_{j=1}^{m} \alpha _j\ge 1$ and let $n \in \NN$.  Let  $r\in \{ \rho _{ess}, \hat{\rho} _{ess}\}$  and let  $\{\Psi_{i j}\}_{i=1, j=1}^{k, m}$ be bounded sets of nonnegative matrices that define positive operators on $L$. Then
\begin{eqnarray}
\nonumber
& & r \left(\left(\Psi_{1 1}^{(\alpha _1)} \circ \cdots \circ \Psi_{1 m}^{(\alpha _m)}\right) + \ldots + \left(\Psi_{k 1}^{(\alpha _1)} \circ \cdots \circ \Psi_{k m}^{(\alpha _m)} \right) \right) \\
\nonumber
& \le &r \left((\Psi_{1 1}+ \cdots  + \Psi_{k 1})^{(\alpha _1)} \circ \cdots
\circ (\Psi_{1 m} + \cdots  + \Psi_{k m})^{(\alpha _m)}\right)\\
\nonumber
& \le &r \left(((\Psi_{1 1} + \cdots  + \Psi_{k 1})^n)^{(\alpha _1)} \circ \cdots
\circ ((\Psi_{1 m} + \cdots + \Psi_{k m})^n)^{(\alpha _m)}\right)^{\frac{1}{n}}\\
&\le&r \left( \Psi_{1 1} + \cdots  + \Psi_{k 1} \right)^{\alpha _1} \cdots
r \left( \Psi_{1 m} + \cdots  + \Psi_{k m}\right)^{\alpha _m} .
\label{lepa2_ess}
\end{eqnarray}
\end{theorem}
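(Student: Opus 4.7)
The plan is to mimic the proof of \cite[Theorem 3.7]{BP22b}: combine the pointwise scalar inequality (\ref{mitr2}) with the monotonicity of $\gamma(\cdot)$ and $\rho_{ess}(\cdot)$ on the cone of positive operators, and then invoke Theorem \ref{finally_ess} (specifically the chain (\ref{gsh_ref_ess})) to obtain the last two inequalities for free.

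For the first inequality in (\ref{lepa2_ess}), I would pick an arbitrary element of the $l$-fold product of the set on the left. Such an element has the form $B = B_1 \cdots B_l$, where for each $i = 1, \ldots, l$,
\begin{equation*}
B_i = \bigl(A_{i,1,1}^{(\alpha_1)} \circ \cdots \circ A_{i,1,m}^{(\alpha_m)}\bigr) + \cdots + \bigl(A_{i,k,1}^{(\alpha_1)} \circ \cdots \circ A_{i,k,m}^{(\alpha_m)}\bigr)
\end{equation*}
with $A_{i,s,j} \in \Psi_{sj}$. Applying (\ref{mitr2}) entrywise to the kernels of the matrices constituting $B_i$ yields
\begin{equation*}
B_i \le C_i := (A_{i,1,1} + \cdots + A_{i,k,1})^{(\alpha_1)} \circ \cdots \circ (A_{i,1,m} + \cdots + A_{i,k,m})^{(\alpha_m)}.
\end{equation*}
Since each factor $A_{i,1,j} + \cdots + A_{i,k,j}$ lies in the bounded set $\Psi_{1j} + \cdots + \Psi_{kj}$, the product $C := C_1 \cdots C_l$ belongs to the $l$-th power of the set appearing on the right. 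Then $0 \le B \le C$, so by monotonicity of $\gamma$ (stated explicitly in Section 2) and of $\rho_{ess}$ (via (\ref{esslim=inf}) applied to $B^j \le C^j$), we obtain $\gamma(B) \le \gamma(C)$ and $\rho_{ess}(B) \le \rho_{ess}(C)$. Taking the appropriate suprema over $B$ and $l \in \NN$ in the definitions (\ref{genrhoess}) and (\ref{jointess}) gives the first inequality of (\ref{lepa2_ess}) for both choices of $r$.

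The second and third inequalities then follow immediately by applying (\ref{gsh_ref_ess}) from Theorem \ref{finally_ess} to the bounded sets $\Psi_{1j} + \cdots + \Psi_{kj}$, $j = 1, \ldots, m$, of nonnegative matrices (which are bounded because each $\Psi_{sj}$ is, and which define operators on $L$ because sums of operators on $L$ are operators on $L$). I do not anticipate a serious obstacle: the only delicate point is the reduction to monotonicity, since the definition of $\hat{\rho}_{ess}$ uses $\gamma$, not the operator norm. This is handled by the monotonicity of $\gamma$ on positive operators, which is precisely the ingredient recorded in Section 2 and which is exactly how the analogous argument in \cite[Theorem 3.7]{BP22b} extends from $(\rho, \hat{\rho})$ to $(\rho_{ess}, \hat{\rho}_{ess})$ under the standing order continuity assumptions on $L$ and $L^*$.
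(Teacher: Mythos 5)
Your proposal is correct and follows essentially the same route the paper indicates: the authors state that Theorem \ref{finally2_ess} is proved as in \cite[Theorem 3.7]{BP22b} by combining the pointwise inequality (\ref{mitr2}) with (\ref{gsh_ref_ess}), which is exactly your domination argument $B_i \le C_i$ followed by monotonicity of $\gamma(\cdot)$ and $\rho_{ess}(\cdot)$ and an application of Theorem \ref{finally_ess} to the summed sets. Your explicit handling of $\rho_{ess}$-monotonicity via $B^j \le C^j$ and (\ref{esslim=inf}) correctly fills in the detail the paper leaves implicit.
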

Next we turn our attention to the weighted geometic symmetrizations of sets of infinite matrices (see \cite{BP22b}). 
Let $\Psi$ be a bounded set of nonnegative matrices that define operators on $l^2$ and denote $\Psi ^* =\{A^* : A \in \Psi\}$. Observe that $(\Psi \Sigma)^*= \Sigma ^* \Psi ^*$, $(\Psi ^m)^* = (\Psi ^*)^m$ for all $m \in \NN$ and $r(\Psi)=r(\Psi^*)$ for all $r\in \{ \rho , \hat{\rho} , \rho _{ess}, \hat{\rho} _{ess}\}$. Let $\alpha$ and $\beta$ be nonnegative numbers such that $\alpha+\beta \ge 1$. 
The weighted geometric symmetrization set $S_{\alpha, \beta}(\Psi)=\Psi^{(\alpha)}\circ(\Psi^*)^{(\beta)}=\{A^{(\alpha)}\circ (B^*)^{(\beta)} : A, B \in \Psi\}$ is a bounded set of nonnegative matrices that define operators on $l^2$ by Theorem \ref{thbegin}(ii). 

The following two results are essential versions of \cite[Proposition 4.4 and Theorem 4.3]{BP22b}. They follow from Theorems \ref{finally_ess} and \ref{finally2_ess} and are proved in a very similar way as \cite[Proposition 4.4 and Theorem 4.3]{BP22b}.
\begin{proposition}
Let $\Psi$, $\Psi_1, \ldots ,\Psi_m$ be bounded sets of nonnegative matrices that define operators on $l^2$, $n \in \mathbb{N}$ and let $\alpha$ and $\beta$ be nonnegative numbers such that $\alpha+\beta \ge 1$. Then we have
\begin{eqnarray}
\nonumber
& & r(S_{\alpha ,\beta} (\Psi_{1}) \cdots  S_{\alpha  ,\beta}(\Psi_{m})) 
\le  r\left((\Psi_1 \cdots \Psi_m )^{(\alpha)} \circ ((\Psi_m \cdots \Psi_1)^*)^{(\beta)}  \right) \\
\nonumber
&\le & r\left(((\Psi_1 \cdots \Psi_m )^n)^{(\alpha)} \circ (((\Psi_m \cdots \Psi_1)^*)^n)^{(\beta)}\right)^{\frac{1}{n}}\\
&\le &  r(\Psi_1 \cdots \Psi_m )^{\alpha} \,  r(\Psi_m \cdots \Psi_1 )^{\beta}, 
\label{geom_sym_prva} \\
& & r(S_{\alpha, \beta}(\Psi)) \le r(S_{\alpha, \beta}(\Psi ^n))^{\frac{1}{n}}\le r(\Psi)^{\alpha+\beta},
\label{geom_sym_druga}
\end{eqnarray}
\begin{eqnarray}
\nonumber
& & r(S_{\alpha  ,\beta}(\Psi_1)+ \cdots + S_{\alpha  ,\beta}(\Psi_m)) \le r\left(S_{\alpha  ,\beta}(\Psi_1+ \cdots+ \Psi_m)\right) \\
&\le & r\left(S_{\alpha  ,\beta}((\Psi_1+ \cdots+ \Psi_m)^{n})\right)^{\frac{1}{n}} \le r(\Psi_1+ \cdots+ \Psi_m)^{\alpha + \beta},
\label{geom_sym_treca}
\end{eqnarray}
\begin{eqnarray}
\nonumber
& & r(S_{\alpha ,\beta}(\Psi_1)S_{\alpha ,\beta}(\Psi_2))\le r\left((\Psi_1 \Psi_2 )^{(\alpha)} \circ ((\Psi_2 \Psi_1)^*)^{(\beta)}  \right) \\
&\le & r\left(((\Psi_1 \Psi_2 )^n)^{(\alpha)} \circ (((\Psi_2 \Psi_1)^*)^n)^{(\beta)}\right)^{\frac{1}{n}}
\le r(\Psi_1\Psi_2)^{\alpha+\beta}
\label{geom_sym_cetvrta}
\end{eqnarray}
for all $r \in \{ \rho _{ess}, \hat{\rho}_{ess}\}$.
\end{proposition}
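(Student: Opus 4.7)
The plan is to derive each of the four inequality chains by a direct application of Theorem \ref{finally_ess} or Theorem \ref{finally2_ess} to the two-column matrix of sets $(\Psi_{i1}, \Psi_{i2}) = (\Psi_i, \Psi_i^*)$ with exponents $(\alpha_1, \alpha_2) = (\alpha, \beta)$, combined with the standard identities $(A_1 \cdots A_m)^* = A_m^* \cdots A_1^*$ (hence $(\Psi_1 \cdots \Psi_m)^* = \Psi_m^* \cdots \Psi_1^*$ on the level of sets), $(\Sigma+\Psi)^* = \Sigma^* + \Psi^*$, the commutation identity $r(\Psi\Sigma) = r(\Sigma\Psi)$ from (\ref{again}), and the $l^2$-specific fact $r(\Psi^*) = r(\Psi)$ for $r \in \{\rho_{ess}, \hat{\rho}_{ess}\}$, which follows from $\gamma(A^*)=\gamma(A)$ and $\rho_{ess}(A^*)=\rho_{ess}(A)$ as recalled in the Preliminaries. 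The hypothesis $\alpha + \beta \ge 1$ is exactly the $\sum_j \alpha_j \ge 1$ hypothesis required by those theorems.

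For the chain (\ref{geom_sym_prva}), I would unfold the product $S_{\alpha,\beta}(\Psi_1) \cdots S_{\alpha,\beta}(\Psi_m) = (\Psi_1^{(\alpha)} \circ (\Psi_1^*)^{(\beta)}) \cdots (\Psi_m^{(\alpha)} \circ (\Psi_m^*)^{(\beta)})$ and recognize it as precisely the set on the left-hand side of (\ref{lepa_ess}) with $k=m$, two columns, $\Psi_{i1}=\Psi_i$ and $\Psi_{i2}=\Psi_i^*$. Theorem \ref{finally_ess} then yields the first three inequalities of (\ref{geom_sym_prva}) once one replaces the column-product $\Psi_1^* \cdots \Psi_m^*$ by $(\Psi_m \cdots \Psi_1)^*$. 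The final inequality follows by pulling $r(((\Psi_m\cdots\Psi_1)^*)^n)^{1/n} = r((\Psi_m \cdots \Psi_1)^*) = r(\Psi_m \cdots \Psi_1)$ out of the bound. The chain (\ref{geom_sym_druga}) is then the specialization of (\ref{gsh_ref_ess}) to the two sets $\Psi$ and $\Psi^*$ with exponents $\alpha$, $\beta$, followed by $r(\Psi^*)^{\beta}=r(\Psi)^\beta$; and (\ref{geom_sym_cetvrta}) is the case $m=2$ of (\ref{geom_sym_prva}) combined with $r(\Psi_2\Psi_1) = r(\Psi_1\Psi_2)$ to collapse the final bound to $r(\Psi_1\Psi_2)^{\alpha+\beta}$.

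For the sum chain (\ref{geom_sym_treca}), I would apply Theorem \ref{finally2_ess} in the same two-column setup $\Psi_{i1}=\Psi_i$, $\Psi_{i2}=\Psi_i^*$, obtaining
\[
r\bigl(S_{\alpha,\beta}(\Psi_1)+\cdots+S_{\alpha,\beta}(\Psi_m)\bigr) \le r\bigl((\Psi_1+\cdots+\Psi_m)^{(\alpha)} \circ (\Psi_1^*+\cdots+\Psi_m^*)^{(\beta)}\bigr),
\]
which is $r(S_{\alpha,\beta}(\Psi_1+\cdots+\Psi_m))$ because $\Psi_1^*+\cdots+\Psi_m^* = (\Psi_1+\cdots+\Psi_m)^*$. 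The remaining two inequalities are then direct consequences of (\ref{geom_sym_druga}) applied to the single set $\Psi_1+\cdots+\Psi_m$.

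The main obstacle is purely bookkeeping: correctly identifying the $\{\Psi_{ij}\}$ matrix in each case so that the conclusions of Theorems \ref{finally_ess} and \ref{finally2_ess} match the desired chains, and taking adjoints/reversing indices at the right moments. There is no genuinely new analytic content beyond these theorems and the $*$-invariance on $l^2$; this is precisely why the authors remark that the proof proceeds in the same way as the corresponding results in \cite{BP22b}.
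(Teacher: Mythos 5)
Your proposal is correct and follows exactly the route the paper indicates: the paper omits the proof, stating only that the proposition "follows from Theorems \ref{finally_ess} and \ref{finally2_ess}" in the same way as the corresponding results of \cite{BP22b}, which is precisely your two-column application of (\ref{lepa_ess}), (\ref{gsh_ref_ess}) and (\ref{lepa2_ess}) with $(\Psi_{i1},\Psi_{i2})=(\Psi_i,\Psi_i^*)$ together with the $*$-identities and $r(\Psi^*)=r(\Psi)$ recorded just before the proposition. No discrepancy to report.
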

\begin{theorem}
\label{sym_matrices}
Let $\Psi$ be a bounded set of nonnegative matrices that define operators on $l^2$ and $r \in \{ \rho _{ess}, \hat{\rho} _{ess}\}$. Assume $\alpha$ and $\beta$  are nonnegative numbers such that $\alpha+\beta \ge 1$ and denote $r_n=r(S_{\alpha,\beta}(\Psi^{2^n}))^{2^{-n}}$ for $n \in \mathbb{N}\cup \{0\}$. Then we have
\be
r(S_{\alpha, \beta}(\Psi))=r_{0}\le r_1\le \cdots \le r_n\le r(\Psi)^{\alpha+\beta}.
\label{finish}
\ee
\end{theorem}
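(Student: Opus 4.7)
The plan is to derive both the monotonicity of the sequence $(r_n)_{n\ge 0}$ and the uniform upper bound $r_n\le r(\Psi)^{\alpha+\beta}$ by repeatedly invoking inequality (\ref{geom_sym_druga}) from the preceding proposition, applied to suitable power sets of $\Psi$. Since each $\Psi^{2^n}$ is itself a bounded set of nonnegative matrices defining operators on $l^2$, (\ref{geom_sym_druga}) remains available when $\Psi$ is replaced by $\Psi^{2^n}$, which is the only observation needed.

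For the monotonicity, I would fix $n\ge 0$ and apply (\ref{geom_sym_druga}) with $\Psi$ replaced by $\Psi^{2^n}$ and with the free index in (\ref{geom_sym_druga}) chosen to be $2$. This gives
\be
r\bigl(S_{\alpha,\beta}(\Psi^{2^n})\bigr)\le r\bigl(S_{\alpha,\beta}((\Psi^{2^n})^{2})\bigr)^{1/2}=r\bigl(S_{\alpha,\beta}(\Psi^{2^{n+1}})\bigr)^{1/2}.
\nonumber
\ee
Raising both sides to the power $2^{-n}$ yields $r_n\le r_{n+1}$, which, together with the trivial identity $r_0=r(S_{\alpha,\beta}(\Psi))$, establishes the left portion of (\ref{finish}).

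For the upper bound, I would apply (\ref{geom_sym_druga}) directly with the free index chosen to be $2^n$; the chain of inequalities there reads
\be
r\bigl(S_{\alpha,\beta}(\Psi)\bigr)\le r\bigl(S_{\alpha,\beta}(\Psi^{2^n})\bigr)^{2^{-n}}\le r(\Psi)^{\alpha+\beta},
\nonumber
\ee
and the middle term is precisely $r_n$. Combined with the monotonicity obtained above, this yields the full inequality (\ref{finish}).

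There is no serious obstacle: the entire argument is a two-step bookkeeping exercise, since all the analytic content (the sub-multiplicativity under squaring and the uniform bound in terms of $r(\Psi)^{\alpha+\beta}$) is already encapsulated in (\ref{geom_sym_druga}). The only thing to verify is that (\ref{geom_sym_druga}) can indeed be applied to the sets $\Psi^{2^n}$, which is immediate from the fact that products of bounded sets of nonnegative matrices defining operators on $l^2$ remain bounded sets of nonnegative matrices defining operators on $l^2$.
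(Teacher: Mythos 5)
Your proof is correct and follows essentially the same route the paper intends: the paper omits the argument, stating only that the theorem follows from the preceding proposition (in particular inequality (\ref{geom_sym_druga})) in the same way as \cite[Theorem 4.3]{BP22b}, and your two applications of (\ref{geom_sym_druga}) --- once to $\Psi^{2^n}$ with exponent $2$ for monotonicity, once to $\Psi$ with exponent $2^n$ for the upper bound --- are exactly that instantiation. The identity $(\Psi^{2^n})^2=\Psi^{2^{n+1}}$ and the boundedness of the power sets are, as you note, immediate, so there is nothing missing.
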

\begin{remark}{\rm Theorem \ref{sym_matrices} implies  that also the essential version of \cite[Theorem 2.5(ii)]{BP21} is valid.
}
\end{remark}
The following result is an essential version of \cite[Theorem 3.1(ii)]{BP22b} and is proved in a similar way as this result by applying Theorem \ref{finally_ess}.  
\begin{proposition}
Let $L\in \mathcal{L}$ such that 
$L$ and $L^*$ have order continuous norms.  Assume  $r\in \{ \rho _{ess}, \hat{\rho} _{ess}\}$, $ m,n \in\NN$, $\alpha\ge 1$ and let $\Psi$ be 
 a  bounded set of nonnegative matrices that define operators on $L$. Then
\be
r(\Psi^{(m)})\le r(\Psi\circ\cdots\circ\Psi)\le r(\Psi ^n \circ\cdots\circ\Psi^n)^{\frac{1}{n}}\le r(\Psi)^{m},
\label{tre}
\ee
where in (\ref{tre}) the Hadamard products in $\Psi\circ\cdots\circ\Psi$ and in $\Psi ^n\circ\cdots\circ\Psi ^n$ are taken $m-1$ times, and
\be
r(\Psi^{(\alpha)})\le r(\Psi^{(\alpha-1)}\circ\Psi)\le  r((\Psi ^n)^{(\alpha-1)}\circ\Psi ^n)^{\frac{1}{n}} \le r(\Psi)^{\alpha}.
\label{tre1}
\ee
\end{proposition}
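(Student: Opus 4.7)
The plan is to reduce both chains of inequalities to Theorem \ref{finally_ess} combined with the elementary fact that each $r\in\{\rho_{ess},\hat\rho_{ess}\}$ is monotone under set inclusion, exactly as in the proof of \cite[Theorem 3.1(ii)]{BP22b}. The monotonicity itself is immediate from the definitions (\ref{genrhoess}) and (\ref{jointess}): if $\Sigma_1\subseteq\Sigma_2$ are bounded sets of operators, then $\Sigma_1^l\subseteq\Sigma_2^l$ for every $l\in\NN$, so
$$\sup_{A\in\Sigma_1^l}\rho_{ess}(A)\le\sup_{A\in\Sigma_2^l}\rho_{ess}(A)\quad\text{and}\quad\sup_{A\in\Sigma_1^l}\gamma(A)\le\sup_{A\in\Sigma_2^l}\gamma(A),$$
and passing to the (lim)sup in $l$ yields $r(\Sigma_1)\le r(\Sigma_2)$.

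For (\ref{tre}), the essential observation is that for any nonnegative matrix $A$ the $m$-fold Hadamard product $A\circ A\circ\cdots\circ A$ coincides entrywise with $A^{(m)}$, so
$$\Psi^{(m)}\subseteq\underbrace{\Psi\circ\cdots\circ\Psi}_{m-1\text{ products}}.$$
Monotonicity then gives the first inequality $r(\Psi^{(m)})\le r(\Psi\circ\cdots\circ\Psi)$. For the remaining two inequalities I would invoke (\ref{gsh_ref_ess}) of Theorem \ref{finally_ess} with $\Psi_1=\cdots=\Psi_m=\Psi$ and $\alpha_1=\cdots=\alpha_m=1$, which is admissible since $\sum_j\alpha_j=m\ge 1$; this yields directly
$$r(\Psi\circ\cdots\circ\Psi)\le r(\Psi^n\circ\cdots\circ\Psi^n)^{1/n}\le r(\Psi)^m.$$

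For (\ref{tre1}), assume first $\alpha>1$. For $A\in\Psi$ one has $A^{(\alpha)}=A^{(\alpha-1)}\circ A$, so
$$\Psi^{(\alpha)}\subseteq\Psi^{(\alpha-1)}\circ\Psi,$$
and monotonicity supplies the first inequality. The remaining two follow from (\ref{gsh_ref_ess}) applied with $m=2$, $\Psi_1=\Psi_2=\Psi$, $\alpha_1=\alpha-1>0$, $\alpha_2=1$, so that $\alpha_1+\alpha_2=\alpha\ge 1$ as required. The borderline case $\alpha=1$ has to be excluded from this direct application because the theorem requires strictly positive weights, but it is trivial: all four terms in (\ref{tre1}) then reduce to $r(\Psi)$.

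There is no genuine obstacle in this argument, since Theorem \ref{finally_ess} does all the analytic work; the only mildly subtle point is the degenerate weight $\alpha_1=\alpha-1=0$ at $\alpha=1$, which must be handled by a separate trivial remark rather than by applying (\ref{gsh_ref_ess}).
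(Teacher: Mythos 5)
Your proposal is correct and takes essentially the route the paper intends: the paper omits the proof, stating only that the result follows from Theorem \ref{finally_ess} in the same way as \cite[Theorem 3.1(ii)]{BP22b}, and your argument---set inclusion $\Psi^{(m)}\subseteq\Psi\circ\cdots\circ\Psi$ (resp. $\Psi^{(\alpha)}\subseteq\Psi^{(\alpha-1)}\circ\Psi$) plus monotonicity of $r$ under inclusion for the first inequality, then (\ref{gsh_ref_ess}) with equal weights $\alpha_j=1$ (resp. weights $\alpha-1$ and $1$) for the rest---is exactly that. The only cosmetic imprecision is the remark on $\alpha=1$: the set $\Psi^{(0)}\circ\Psi$ need not literally equal $\Psi$, but since $A^{(0)}\circ B\le B$ entrywise and $A^{(0)}\circ A=A$, the chain (\ref{tre1}) still collapses correctly in that degenerate case.
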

The following result is an essential version of \cite[Theorem 3.6]{BP22b} and is proved in a similar way as this result by applying Theorems \ref{finally_ess}, \ref{powers}, property (\ref{again}) and \cite[Theorem 3.4]{BP22b}.
\begin{theorem} 
\label{kathyth2_ess}
Let $L\in \mathcal{L}$ such that 
$L$ and $L^*$ have order continuous norms. 
Let $\Psi _1, \ldots , \Psi _m$ be bounded sets of nonnegative matrices that define operators on L and
 $ \Phi _j=\Psi_j\ldots\Psi_m \Psi_1\ldots\Psi_{j-1}$ for $j=1,\ldots , m$. Assume that
 $\alpha\ge\frac{1}{m}$, $\alpha _j \ge 0$, $j=1, \ldots , m$, $\sum_{j=1}^{m}\alpha_j \ge 1$ and $n \in \NN$. If $r\in \{ \rho _{ess}, \hat{\rho} _{ess}\}$  and $ \Sigma _j=\Psi_j ^{(\alpha m)}\ldots\Psi_m ^{(\alpha m)} \Psi_1 ^{(\alpha m)} \ldots\Psi_{j-1} ^{(\alpha m)}$ for $j=1,\ldots , m$, then we have
\begin{eqnarray}
\nonumber
& & r\left(\Psi _1^{(\alpha)}\circ\cdots\circ\Psi _m^{(\alpha)}\right)\le r\left(\Phi _1^{(\alpha)}\circ\cdots\circ\Phi _m^{(\alpha)}\right)^{\frac{1}{m}} \\
&\le& r\left((\Phi _1 ^n)^{(\alpha)}\circ\cdots\circ(\Phi _m ^n)^{(\alpha)}\right)^{\frac{1}{mn}} \le r\left(\Psi_1\cdots\Psi_m\right)^{\alpha},
\label{ineqx} 
\end{eqnarray}
\begin{eqnarray}
\nonumber
& & r\left(\Psi _1^{(\alpha)}\circ\cdots\circ\Psi _m^{(\alpha)}\right)\le r\left(\Psi_1^{(\alpha m)}\cdots\Psi_m^{(\alpha m)}\right)^{\frac{1}{m}}\\
&\le&  r\left((\Psi_1\cdots\Psi_m)^{(\alpha m)}\right)^{\frac{1}{m}}\le  r\left(((\Psi_1\cdots\Psi_m)^n)^{(\alpha m)}\right)^{\frac{1}{nm}} \le
r\left(\Psi_1\cdots\Psi_m\right)^{\alpha}. \;\;\;\;\;\;
\label{ineqx2}
\end{eqnarray}
If, in addition, $\alpha \ge 1$ then 
\begin{eqnarray}
\nonumber
& & r\left(\Psi_1^{(\alpha)}\circ\cdots\circ\Psi_m^{(\alpha)}\right)  \le  r\left(\Phi_1^{(\alpha)}\circ\cdots\circ
\Phi_m^{(\alpha)}\right)^{\frac{1}{m}} \le  r\left((\Phi _1 ^n)^{(\alpha)}\circ\cdots\circ(\Phi _m ^n)^{(\alpha)}\right)^{\frac{1}{mn}} \\
&\le &\left( r\left((\Phi_1 ^n)^{(m)}\right)\cdots r\left((\Phi_m ^n)^{(m)}\right)\right)^{\frac{\alpha}{m^2 n}}
 \le r\left(\Psi_1\cdots\Psi_m\right)^{\alpha},
\label{xyz}
\end{eqnarray}
\begin{eqnarray}
\nonumber
& & r\left(\Psi _1^{(\alpha)}\circ\cdots\circ\Psi _m^{(\alpha)}\right)\le  r\left(\Sigma _1^{(\frac{1}{m})}\circ\cdots\circ\Sigma _m^{(\frac{1}{m})}\right)^{\frac{1}{m}} \\
\nonumber
&\le &  r\left((\Sigma _1 ^n)^{(\frac{1}{m})}\circ\cdots\circ(\Sigma _m ^n)^{(\frac{1}{m})}\right)^{\frac{1}{mn}}
 \le r\left(\Psi_1^{(\alpha m)}\cdots\Psi_m^{(\alpha m)}\right)^{\frac{1}{m}} \\
&\le &  r\left((\Psi_1\cdots\Psi_m)^{(\alpha m)}\right)^{\frac{1}{m}}\le  r\left(((\Psi_1\cdots\Psi_m)^n)^{(\alpha m)}\right)^{\frac{1}{nm}} \le r\left(\Psi_1\cdots\Psi_m\right)^{\alpha}. \;\;\;\;\;\;
\label{kraj}
\end{eqnarray}
\end{theorem}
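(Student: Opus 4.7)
The plan is to follow the structure of the proof of \cite[Theorem 3.6]{BP22b}, substituting each classical ingredient by its essential-radius analogue established earlier in this paper. The three recurring tools are commutativity of the Hadamard product, the identities $r(\Sigma^m)=r(\Sigma)^m$ and $r(\Psi\Sigma)=r(\Sigma\Psi)$ from \eqref{again}, and the inequalities of Theorems \ref{finally_ess} and \ref{powers}. Since $\alpha\ge 1/m$ gives $\sum_{j=1}^m \alpha = m\alpha\ge 1$, the hypotheses of Theorem \ref{finally_ess} are available throughout, and $\alpha m\ge 1$ ensures that each Hadamard power $\Psi_j^{(\alpha m)}$ remains a bounded set of matrices defining operators on $L$ (Theorem \ref{special_case}(iii)).

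To obtain \eqref{ineqx}, set $A := \Psi_1^{(\alpha)}\circ\cdots\circ\Psi_m^{(\alpha)}$ and, exploiting commutativity of the Hadamard product, view the $i$-th copy of $A$ in the product $A^m$ as $\Psi_i^{(\alpha)}\circ\Psi_{i+1}^{(\alpha)}\circ\cdots\circ\Psi_{i-1}^{(\alpha)}$ (indices modulo $m$). Apply the first inequality of \eqref{lepa_ess} in Theorem \ref{finally_ess} with $k=m$, $\alpha_j=\alpha$ and $\Psi_{ij}$ equal to the $j$-th factor in this $i$-th cyclic listing. The $j$-th column of the array $(\Psi_{ij})$ collapses to the cyclic product $\Phi_j$, so the inequality reads $r(A^m)\le r(\Phi_1^{(\alpha)}\circ\cdots\circ\Phi_m^{(\alpha)})$. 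Combined with $r(A^m)=r(A)^m$ this is the first inequality of \eqref{ineqx}; the other two are direct applications of \eqref{gsh_ref_ess}.

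For \eqref{ineqx2}, rewrite $\Psi_j^{(\alpha)} = (\Psi_j^{(\alpha m)})^{(1/m)}$. Since $\sum_{j=1}^m 1/m = 1$, the essential-radius case of \eqref{Hu_ess} in Theorem \ref{powers}(i) yields $r(A)\le r(\Psi_1^{(\alpha m)}\cdots\Psi_m^{(\alpha m)})^{1/m}$. The second inequality of \eqref{ineqx2} is the first step of \eqref{with_t_ess} in Theorem \ref{finally_ess} applied with $t=\alpha m$, and the remaining two inequalities are the rest of the chain \eqref{with_t_ess} together with \eqref{folge}.

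For \eqref{xyz} with $\alpha\ge 1$, the first three inequalities repeat those of \eqref{ineqx}. For the fourth, set $B_j := \Phi_j^n$ and use $B_j^{(\alpha)} = (B_j^{(m)})^{(\alpha/m)}$; since $\sum_{j=1}^m \alpha/m = \alpha \ge 1$, the rightmost inequality of \eqref{gsh_ref_ess} yields $r(B_1^{(\alpha)}\circ\cdots\circ B_m^{(\alpha)})\le \prod_{j=1}^m r(B_j^{(m)})^{\alpha/m}$, and raising to $1/(mn)$ delivers the desired bound. The last step uses $r((\Phi_j^n)^{(m)})\le r(\Phi_j^n)^m = r(\Psi_1\cdots\Psi_m)^{nm}$ via \eqref{folge} and cyclicity. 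Finally, \eqref{kraj} is obtained by applying the already-proved \eqref{ineqx} to the sets $\tilde\Psi_j := \Psi_j^{(\alpha m)}$ with exponent $1/m$: the corresponding cyclic products $\tilde\Phi_j$ coincide with $\Sigma_j$, giving the first three inequalities of \eqref{kraj}, and the tail is obtained by splicing in \eqref{ineqx2}. The main obstacle is bookkeeping: at every step one must verify the appropriate $\sum\alpha_j\ge 1$ (or $=1$) hypothesis of the invoked essential-radius inequality and confirm that the intermediate Hadamard powers remain matrices defining operators on $L$, so that the essential-radius versions from Theorems \ref{finally_ess} and \ref{powers} actually apply.
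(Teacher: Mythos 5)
Your proposal is correct and follows essentially the route the paper intends: the paper omits the proof, saying only that it parallels \cite[Theorem 3.6]{BP22b} via Theorems \ref{finally_ess} and \ref{powers} and property (\ref{again}), and your cyclic Hadamard reordering of $A^m$, the applications of (\ref{lepa_ess}), (\ref{gsh_ref_ess}), (\ref{Hu_ess}) and (\ref{with_t_ess}), and the bootstrapping of (\ref{ineqx}) and (\ref{ineqx2}) into (\ref{kraj}) supply exactly the intended details. The only slip is citing (\ref{folge}), which is stated only for $r\in\{\rho,\hat{\rho}\}$; for $r\in\{\rho_{ess},\hat{\rho}_{ess}\}$ you should instead invoke (\ref{with_t_ess}) with $k=1$, which gives the same bound.
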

The following consequence provides the essential version of some of  the main results of \cite{P12} (\cite[Theorems 3.5 and 3.7]{P12}).
\begin{corollary}Let $L\in \mathcal{L}$ such that 
$L$ and $L^*$ have order continuous norms. 
Let $\Psi _1$ and $\Psi _2$ be bounded sets of nonnegative matrices that define operators on L, let $r\in \{ \rho _{ess}, \hat{\rho} _{ess}\}$ and $\beta \in [0,1]$. Then  
$$
r (\Psi _1 \circ \Psi_2)  \le r (\Psi_1 ^{(2)} \Psi _2 ^{(2)})^{\frac{1}{2}} \le r ((\Psi_1 \circ \Psi_1)(\Psi _2 \circ \Psi _2))^{\frac{1}{2}}  \le  $$
\be
\le r(\Psi _1 \Psi _2\circ \Psi_1 \Psi_2)^{\frac{\beta}{2}} r(\Psi_2 \Psi_1 \circ \Psi_2\Psi_1)^{\frac{1-\beta}{2}} \le r(\Psi_1\Psi_2)
\label{P1_ess_j}
\ee
and
$$
r (\Psi_1 \circ \Psi_2) \le r (\Psi_1\Psi_2\circ \Psi_2\Psi_1)^{\frac{1}{2}} \le r((\Psi_1\Psi_2)^{(2)})^{\frac{1}{4}}r((\Psi_2\Psi_1)^{(2)})^{\frac{1}{4}} $$
\be
\le  r(\Psi_1\Psi_2\circ \Psi_1\Psi_2)^{\frac{1}{4}} r(\Psi_2\Psi_1 \circ \Psi_2\Psi_1)^{\frac{1}{4}} \le  r (\Psi_1\Psi_2).
\label{P2_ess_j}
\ee
\label{P12_cor}
\end{corollary}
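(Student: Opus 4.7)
The plan is to establish both chains (\ref{P1_ess_j}) and (\ref{P2_ess_j}) link by link, using only Theorem \ref{finally_ess}, Theorem \ref{kathyth2_ess}, the cyclicity property $r(\Psi\Sigma)=r(\Sigma\Psi)$ from (\ref{again}), and the trivial set-monotonicity $r(\Psi)\le r(\Sigma)$ whenever $\Psi\subseteq\Sigma$ (immediate from $\Psi^m\subseteq\Sigma^m$).

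For chain (\ref{P1_ess_j}), the opening inequality $r(\Psi_1\circ\Psi_2)\le r(\Psi_1^{(2)}\Psi_2^{(2)})^{1/2}$ is exactly (\ref{ineqx2}) with $m=2$, $\alpha=1$. Next, $r(\Psi_1^{(2)}\Psi_2^{(2)})\le r((\Psi_1\circ\Psi_1)(\Psi_2\circ\Psi_2))$ follows from the set inclusion $\Psi_i^{(2)}\subseteq\Psi_i\circ\Psi_i$. For the crucial $\beta$-weighted middle link I would first establish the two endpoint cases: applying the first inequality in (\ref{lepa_ess}) of Theorem \ref{finally_ess} with $k=m=2$, $\alpha_1=\alpha_2=1$, $\Psi_{11}=\Psi_{12}=\Psi_1$, $\Psi_{21}=\Psi_{22}=\Psi_2$ gives $r((\Psi_1\circ\Psi_1)(\Psi_2\circ\Psi_2))\le r(\Psi_1\Psi_2\circ\Psi_1\Psi_2)$; using cyclicity to rewrite the left side as $r((\Psi_2\circ\Psi_2)(\Psi_1\circ\Psi_1))$ and applying the same lemma with labels swapped yields $r((\Psi_1\circ\Psi_1)(\Psi_2\circ\Psi_2))\le r(\Psi_2\Psi_1\circ\Psi_2\Psi_1)$. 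The elementary fact that $x\le a$ and $x\le b$ imply $x=x^\beta x^{1-\beta}\le a^\beta b^{1-\beta}$ for $\beta\in[0,1]$ then delivers the weighted form after taking square roots. The final link is (\ref{gsh_ref_ess}) with $\alpha_1=\alpha_2=1$, which yields $r(\Psi_1\Psi_2\circ\Psi_1\Psi_2)\le r(\Psi_1\Psi_2)^2$ and, via $r(\Psi_2\Psi_1)=r(\Psi_1\Psi_2)$, the analogous bound for $\Psi_2\Psi_1\circ\Psi_2\Psi_1$; the weighted geometric mean then collapses to $r(\Psi_1\Psi_2)$.

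For chain (\ref{P2_ess_j}), the opening inequality is (\ref{ineqx}) in Theorem \ref{kathyth2_ess} with $m=2$, $\alpha=1$, $n=1$, since $\Phi_1=\Psi_1\Psi_2$ and $\Phi_2=\Psi_2\Psi_1$. For the second link I would rewrite $\Psi_1\Psi_2\circ\Psi_2\Psi_1=((\Psi_1\Psi_2)^{(2)})^{(1/2)}\circ((\Psi_2\Psi_1)^{(2)})^{(1/2)}$ (valid because the matrices are nonnegative, so $(X^{(2)})^{(1/2)}=X$) and invoke (\ref{gsh_ref_ess}) with $\alpha_1=\alpha_2=1/2$ applied to the sets $(\Psi_1\Psi_2)^{(2)}$ and $(\Psi_2\Psi_1)^{(2)}$; this gives $r(\Psi_1\Psi_2\circ\Psi_2\Psi_1)\le r((\Psi_1\Psi_2)^{(2)})^{1/2}r((\Psi_2\Psi_1)^{(2)})^{1/2}$, and fourth roots match the stated form. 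The third link uses $\Sigma^{(2)}\subseteq\Sigma\circ\Sigma$ applied to $\Sigma=\Psi_1\Psi_2$ and $\Sigma=\Psi_2\Psi_1$, and the closing link is identical to that of chain 1.

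I expect no deep obstacle anywhere; the only mildly delicate point is the $\beta$-interpolation in chain 1, which is resolved by the elementary observation that a common upper bound $x\le\min\{a,b\}$ self-interpolates as $x\le a^\beta b^{1-\beta}$ for each $\beta\in[0,1]$.
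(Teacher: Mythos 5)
Your proposal is correct and follows essentially the same route as the paper's own proof: the opening links come from (\ref{ineqx2}) and (\ref{ineqx}), the set inclusions $\Psi^{(2)}\subseteq\Psi\circ\Psi$ handle the Hadamard-power comparisons, the identity $\Psi_1\Psi_2\circ\Psi_2\Psi_1=((\Psi_1\Psi_2)^{(2)})^{(\frac{1}{2})}\circ((\Psi_2\Psi_1)^{(2)})^{(\frac{1}{2})}$ together with (\ref{gsh_ref_ess}) gives the middle of the second chain, and (\ref{lepa_ess}) plus cyclicity gives the $\beta$-weighted link. You merely make explicit the elementary interpolation $x\le\min\{a,b\}\Rightarrow x\le a^{\beta}b^{1-\beta}$, which the paper leaves implicit.
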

\begin{proof} The first inequality in (\ref{P1_ess_j}) is a special case of the first inequality in (\ref{ineqx2}). The second inequality (\ref{ineqx2}) is trivial, since $\Psi_i ^{(2)} \subset \Psi _i \circ \Psi_i$ for $i=1,2$.  The third inequality in (\ref{P1_ess_j}) follows from the first inequality in (\ref{lepa_ess}) and from (\ref{again}), while the fourt inequality in (\ref{P1_ess_j}) follows from (\ref{gsh_ref_ess}) and  (\ref{again}).

The first inequality in (\ref{P2_ess_j}) is a special case of the first inequality in (\ref{ineqx}). To prove the second and third inequality in (\ref{P2_ess_j}) observe that
$$\Psi_1\Psi_2\circ \Psi_2\Psi_1 = ( (\Psi_1\Psi_2)^{(2)})^{(\frac{1}{2})}\circ ((\Psi_2\Psi_1)^{(2)})^{(\frac{1}{2})}$$
It folllows from  (\ref{gsh_ref_ess})  and (\ref{tre})  that 
$$ r (\Psi_1\Psi_2\circ \Psi_2\Psi_1) \le r((\Psi_1\Psi_2)^{(2)})^{\frac{1}{2}}r((\Psi_2\Psi_1)^{(2)})^{\frac{1}{2}} \le  r(\Psi_1\Psi_2\circ \Psi_1\Psi_2)^{\frac{1}{2}} r(\Psi_2\Psi_1 \circ \Psi_2\Psi_1)^{\frac{1}{2}}, $$
which establishes the second and third inequality in (\ref{P2_ess_j}). The fourth inequality in (\ref{P2_ess_j}) follows from (\ref{gsh_ref_ess}) and  (\ref{again}), which completes the proof.
\end{proof}
\begin{remark} {\rm Under the assumptions of Theorem \ref{P12_cor} one can similarly as (\ref{P2_ess_j}) prove its variant:
\be
r (\Psi_1 \circ \Psi_2) \le r (\Psi_1\Psi_2\circ \Psi_2\Psi_1)^{\frac{1}{2}} \le r((\Psi_1\Psi_2)^{(\frac{1}{\beta})})^{\frac{\beta}{2}}r((\Psi_2\Psi_1)^{(\frac{1}{1-\beta})})^{\frac{1-\beta}{2}} 
\le  r (\Psi_1\Psi_2).
\label{P3_ess_j}
\ee
Similarly (\ref{P3_ess_j}) is proved if  $L\in \mathcal{L}$ and $r\in \{ \rho, \hat{\rho}\}$.
}
\end{remark}
In a special case of singelton sets $\Psi _1=\{A\}$ and $\Psi _1=\{B\}$ we obtain the essential versions of (\ref{P1}) and (\ref{P2}) (infact a slight generalization). 
\begin{corollary}Let $L\in \mathcal{L}$ such that 
$L$ and $L^*$ have order continuous norms. 
Let $A$ and $B$ be nonnegative matrices that define operators on L and let $\beta \in [0,1]$. Then  
\be
\rho _{ess} (A\circ B) \le \rho_{ess} ((A\circ A)(B\circ B))^{\frac{1}{2}}\le \rho_{ess}(AB \circ AB)^{\frac{\beta}{2}} \rho_{ess}(BA \circ BA)^{\frac{1-\beta}{2}} \le \rho_{ess} (AB)
\label{P1_ess}
\ee
and
\be
\rho_{ess} (A\circ B) \le \rho_{ess} (AB\circ BA)^{\frac{1}{2}}\le  \rho_{ess}((AB)^{(\frac{1}{\beta})}))^{\frac{\beta}{2}} \rho_{ess}((BA)^{(\frac{1}{1-\beta})})^{\frac{1-\beta}{2}}  \le  \rho_{ess} (AB).
\label{P2_ess}
\ee
\label{essential_Aud}
\end{corollary}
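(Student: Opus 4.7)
The plan is to obtain Corollary \ref{essential_Aud} as a direct specialization of Corollary \ref{P12_cor} together with the remark containing inequality (\ref{P3_ess_j}), applied to the singleton bounded sets $\Psi_1 = \{A\}$ and $\Psi_2 = \{B\}$ and with $r = \rho_{ess}$ throughout. Under this specialization the generalized essential spectral radius of a singleton set of operators coincides with the ordinary essential spectral radius of its unique element, and Hadamard powers, Hadamard products and ordinary products of singleton sets reduce to the corresponding operator-level Hadamard powers, Hadamard products and ordinary products.

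To deduce (\ref{P1_ess}), I would instantiate (\ref{P1_ess_j}) at these singletons. The chain produced reads $\rho_{ess}(A\circ B) \le \rho_{ess}(A^{(2)} B^{(2)})^{1/2} \le \rho_{ess}((A\circ A)(B\circ B))^{1/2} \le \rho_{ess}(AB\circ AB)^{\beta/2}\,\rho_{ess}(BA\circ BA)^{(1-\beta)/2} \le \rho_{ess}(AB)$. Since $A^{(2)} = A\circ A$ and $B^{(2)} = B\circ B$, the second and third terms are equal, so the second may simply be dropped, yielding exactly (\ref{P1_ess}).

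To deduce (\ref{P2_ess}), I would invoke the inequality (\ref{P3_ess_j}) from the remark following Corollary \ref{P12_cor} at the same singletons $\Psi_1=\{A\}$, $\Psi_2=\{B\}$ with $r=\rho_{ess}$. This gives directly $\rho_{ess}(A\circ B) \le \rho_{ess}(AB\circ BA)^{1/2} \le \rho_{ess}((AB)^{(1/\beta)})^{\beta/2}\,\rho_{ess}((BA)^{(1/(1-\beta))})^{(1-\beta)/2} \le \rho_{ess}(AB)$, which is (\ref{P2_ess}).

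There is no genuine obstacle: all the substantive work has already been carried out in Corollary \ref{P12_cor} and its accompanying remark, which rest in turn on Theorems \ref{finally_ess} and \ref{thbegin_ess} together with the commutation identity (\ref{again}). The only point that deserves a word of comment is the behaviour at the boundary values $\beta\in\{0,1\}$ in (\ref{P2_ess}), where one of the Hadamard exponents $1/\beta$ or $1/(1-\beta)$ formally diverges; in these endpoint cases the corresponding factor is interpreted by the standard convention as $1$, and (\ref{P2_ess}) reduces to $\rho_{ess}(A\circ B) \le \rho_{ess}(AB\circ BA)^{1/2} \le \rho_{ess}(AB)$, which follows from (\ref{gsh_ref_ess}) combined with (\ref{again}).
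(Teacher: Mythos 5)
Your proposal is correct and matches the paper exactly: the paper derives Corollary \ref{essential_Aud} precisely by specializing Corollary \ref{P12_cor} and the accompanying remark containing (\ref{P3_ess_j}) to the singleton sets $\Psi_1=\{A\}$, $\Psi_2=\{B\}$ with $r=\rho_{ess}$, offering no further argument. Your observations that $A^{(2)}B^{(2)}=(A\circ A)(B\circ B)$ for singletons and about the endpoint cases $\beta\in\{0,1\}$ are sensible but not needed beyond what the paper already does.
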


The following results is an essential version of \cite[Lemma 3.16]{B23+} and is proved in a similar way as this result by applying Theorem \ref{finally_ess}.

\begin{proposition}
\label{dog}
Let $\alpha\ge\frac{1}{2}$, $r\in \{ \rho _{ess}, \hat{\rho} _{ess}\}$ and let $\Psi$ be bounded set of nonnegative matrices that define operators on $l^2$.Then
\be
r(\Psi^{(\alpha)}\circ(\Psi^*)^{(\alpha)})\le r(\Psi^{(\alpha)}\circ\Psi^{(\alpha)})\le r(\Psi)^{2\alpha}
\label{hund}
\ee
\end{proposition}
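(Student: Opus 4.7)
The plan is to derive both inequalities in~(\ref{hund}) directly from Theorem~\ref{finally_ess}, combined with the adjoint-invariance $r(\Xi^*)=r(\Xi)$ for bounded sets $\Xi$ of operators on $l^2$ (which follows from $\gamma(T^*)=\gamma(T)$, $\rho_{ess}(T^*)=\rho_{ess}(T)$ on $l^2$, and $(\Xi^m)^*=(\Xi^*)^m$).

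The right inequality is immediate: applying~(\ref{gsh_ref_ess}) with $m=2$, $\Psi_1=\Psi_2=\Psi$, and $\alpha_1=\alpha_2=\alpha$ (so that $\alpha_1+\alpha_2=2\alpha\ge 1$) yields $r(\Psi^{(\alpha)}\circ\Psi^{(\alpha)})\le r(\Psi)^\alpha r(\Psi)^\alpha=r(\Psi)^{2\alpha}$.

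The core idea for the left inequality is the set-theoretic identity
\[
\Psi^{(\alpha)}\circ(\Psi^*)^{(\alpha)} \;=\; (\Psi^{(2\alpha)})^{(1/2)}\circ\bigl((\Psi^*)^{(2\alpha)}\bigr)^{(1/2)},
\]
which holds because both sides consist of all operators of the form $A^{(\alpha)}\circ(B^*)^{(\alpha)}$ with $A,B\in\Psi$. Since $2\alpha\ge 1$, Theorem~\ref{special_case}(ii) ensures that $\Psi^{(2\alpha)}$ and $(\Psi^*)^{(2\alpha)}=(\Psi^{(2\alpha)})^*$ are bounded sets of nonnegative matrices defining operators on $l^2$. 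I would then apply~(\ref{gsh_ref_ess}) with $m=2$, these two sets, and exponents $\alpha_1=\alpha_2=\tfrac12$ (which sum to $1$), and invoke $r((\Psi^{(2\alpha)})^*)=r(\Psi^{(2\alpha)})$ to obtain $r(\Psi^{(\alpha)}\circ(\Psi^*)^{(\alpha)})\le r(\Psi^{(2\alpha)})^{1/2}\,r(\Psi^{(2\alpha)})^{1/2}=r(\Psi^{(2\alpha)})$.

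To finish, note that $A^{(2\alpha)}=A^{(\alpha)}\circ A^{(\alpha)}$ for each $A\in\Psi$, so $\Psi^{(2\alpha)}\subseteq\Psi^{(\alpha)}\circ\Psi^{(\alpha)}$; monotonicity of $r$ with respect to set inclusion (immediate from the $\sup$-definitions of $\rho_{ess}$ and $\hat{\rho}_{ess}$, since $\Sigma_1\subseteq\Sigma_2$ forces $\Sigma_1^m\subseteq\Sigma_2^m$) then gives $r(\Psi^{(2\alpha)})\le r(\Psi^{(\alpha)}\circ\Psi^{(\alpha)})$, chaining to the desired bound. There is no serious obstacle: once the factorization above is recognised, everything reduces to a single application of Theorem~\ref{finally_ess} with the $\ast$-invariance on $l^2$ used to collapse the two factors on the right-hand side into $r(\Psi^{(2\alpha)})$.
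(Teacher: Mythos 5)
Your proof is correct and realizes exactly the strategy the paper indicates (the paper only sketches the argument by reference to \cite{B23+} and Theorem \ref{finally_ess}): the right inequality is a direct application of (\ref{gsh_ref_ess}) with weights $\alpha,\alpha$ summing to $2\alpha\ge 1$, and your left inequality correctly combines (\ref{gsh_ref_ess}) applied to the sets $\Psi^{(2\alpha)}$ and $(\Psi^{(2\alpha)})^{*}$ with weights $\tfrac12,\tfrac12$, the $*$-invariance $r(\Xi)=r(\Xi^{*})$ recorded in Section 4, and the inclusion $\Psi^{(2\alpha)}\subseteq\Psi^{(\alpha)}\circ\Psi^{(\alpha)}$. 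All auxiliary facts you invoke (boundedness of $\Psi^{(2\alpha)}$ via Theorem \ref{special_case}(iii), monotonicity of $r$ under set inclusion from the $\sup$/$\lim$ definitions) are available in the paper, so there is no gap.
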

The following special case is an essential version of \cite[Lemma 3.13]{BP21}.
\begin{corollary}
\label{dog_cor}
Let $\alpha\ge\frac{1}{2}$ and  
let $A$ be a nonnegative matrix that defines an operators on $l^2$.Then
\be
\rho _{ess}(A^{(\alpha)}\circ(A^*)^{(\alpha)})\le \rho _{ess}(A^{(\alpha)}\circ A^{(\alpha)})\le \rho _{ess}(A)^{2\alpha}
\label{hund_cor}
\ee
\end{corollary}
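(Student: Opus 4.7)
The plan is to obtain Corollary \ref{dog_cor} as the singleton specialization of Proposition \ref{dog} with $\Psi = \{A\}$ and $r = \rho_{ess}$. The only content is unpacking definitions, so there is no genuine obstacle; I just need to verify that each set-valued quantity in (\ref{hund}) reduces to its scalar counterpart in (\ref{hund_cor}).

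First I would record the behavior of the relevant operations on singletons. For $\Psi = \{A\}$, the iterates satisfy $\Psi^m = \{A^m\}$, so from (\ref{genrhoess}) we get $\rho_{ess}(\Psi) = \sup_{m \in \NN} \rho_{ess}(A^m)^{1/m} = \rho_{ess}(A)$, using the spectral mapping identity $\rho_{ess}(A^m) = \rho_{ess}(A)^m$ that comes from (\ref{esslim=inf}). The Hadamard and adjoint constructions are likewise compatible with singletons: $\Psi^{(\alpha)} = \{A^{(\alpha)}\}$, $\Psi^* = \{A^*\}$, $(\Psi^*)^{(\alpha)} = \{(A^*)^{(\alpha)}\}$, and Hadamard products of singleton sets are singletons of the corresponding Hadamard products, so
\[
\Psi^{(\alpha)} \circ (\Psi^*)^{(\alpha)} = \{A^{(\alpha)} \circ (A^*)^{(\alpha)}\}, \qquad \Psi^{(\alpha)} \circ \Psi^{(\alpha)} = \{A^{(\alpha)} \circ A^{(\alpha)}\}.
\]

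Second, I would invoke Proposition \ref{dog} with this choice of $\Psi$, $r = \rho_{ess}$ and the given $\alpha \ge 1/2$. The inequality (\ref{hund}) then asserts exactly
\[
\rho_{ess}(A^{(\alpha)} \circ (A^*)^{(\alpha)}) \le \rho_{ess}(A^{(\alpha)} \circ A^{(\alpha)}) \le \rho_{ess}(A)^{2\alpha},
\]
which is (\ref{hund_cor}). Since every step is a tautological identification, there is no hard part; the substantive work was already done in Proposition \ref{dog} (which in turn rests on Theorem \ref{finally_ess}).
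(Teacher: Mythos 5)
Your proposal is correct and matches the paper exactly: the paper presents Corollary \ref{dog_cor} without a written proof, simply as the singleton special case $\Psi=\{A\}$ of Proposition \ref{dog}, which is precisely the reduction you carry out. Your verification that the set-valued quantities (generalized essential spectral radius, Hadamard products, adjoints) collapse to their scalar counterparts on singletons is the only content needed, and it is done correctly.
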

\bigskip

\section{Further results on $L^2(X,\mu)$}

In this section we will apply a fact that for a bounded linear operator $T$  defined on a Hilbert space we have
\begin{equation}
\rho_{ess}(T^* T) = \rho_{ess}(TT^*)=\gamma (T^* T) = \gamma (TT^*)=  \gamma(T)^2.
\label{Hilbert}
\end{equation}
Since we do not know if this result has previously been known or not, we prove it below in Lemma \ref{Hilb1}.

 Recall that a bounded linear operator on a Hilbert space $\mathcal{H}$ is \emph{hyponormal} if $\|Tx \| \ge \|T^*x\|$ for all $x \in \mathcal{H}$, or equivalently if $T^*T - TT^*$ is positive semidefinite. In particular, any normal operator is hyponormal.  Let $B(\mathcal{H})$ denote the Banach algebra of bounded linear operators on $\mathcal{H}$  and let $\pi$ be the canonical projection  of $B(\mathcal{H})$ onto the Calkin algebra $B(\mathcal{H})/K(\mathcal{H})$. Since the set of $K(\mathcal{H})$ compact  operators in $B(\mathcal{H})$  is a closed two-sided ideal in $B(\mathcal{H})$, the Calkin algebra is a $C^*$-algebra and the canonical projection is a $*$-isomorphism.  The essential norm of $T \in B(\mathcal{H})$ is by definition $\|T\|_{ess} = \|\pi(T)\|$ and we have  $\rho_{ess}(T) = \rho(\pi(T))$.  
 
 The following proposition is probably known as it combines well-known results of Nussbaum and Stampfli \cite{Nussbaum70, Stampfli62}, but we are unaware of a direct reference.
\begin{proposition} \label{prop:NussbaumStampfli}
Let $\mathcal{H}$ be a Hilbert space. 
If $T \in B(\mathcal{H})$ is hyponormal, then $$\rho_\text{ess}(T) = \gamma(T) = \|T\|_\text{ess}.$$
\end{proposition}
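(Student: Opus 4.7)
The plan is to combine two classical ingredients: Nussbaum's theorem, which identifies the Hausdorff measure of non-compactness with the essential norm on a Hilbert space, and Stampfli's theorem on hyponormal operators, transferred to the Calkin algebra. The general chain of inequalities $\rho_{ess}(T) \le \|T\|_{ess}$ and $\gamma(T) \le \|T\|_{ess}$ hold for free (the latter by a standard covering argument); the content of the proposition is the reverse direction.

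\textbf{Step 1 (Nussbaum).} On any Hilbert space, Nussbaum \cite{Nussbaum70} proved that $\gamma(T) = \|T\|_{ess}$ for every $T \in B(\mathcal{H})$, without any hyponormality assumption. This handles the equality $\gamma(T) = \|T\|_{ess}$ and reduces the problem to showing $\|T\|_{ess} = \rho_{ess}(T)$ under the hyponormality hypothesis.

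\textbf{Step 2 (Hyponormality descends to the Calkin algebra).} Since $T^*T - TT^* \ge 0$ in $B(\mathcal{H})$ and the canonical projection $\pi : B(\mathcal{H}) \to B(\mathcal{H})/K(\mathcal{H})$ is a $*$-homomorphism of C*-algebras, positivity is preserved, so
\[
\pi(T)^*\pi(T) - \pi(T)\pi(T)^* \;=\; \pi(T^*T - TT^*) \;\ge\; 0
\]
in the Calkin algebra. That is, $\pi(T)$ is a hyponormal element of the C*-algebra $B(\mathcal{H})/K(\mathcal{H})$.

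\textbf{Step 3 (Stampfli's theorem via Gelfand--Naimark).} By the Gelfand--Naimark theorem, the Calkin algebra admits an isometric $*$-isomorphism $\phi$ onto a norm-closed $*$-subalgebra of $B(\mathcal{K})$ for some Hilbert space $\mathcal{K}$. The element $\phi(\pi(T)) \in B(\mathcal{K})$ inherits hyponormality from $\pi(T)$, so Stampfli's theorem \cite{Stampfli62} yields $\|\phi(\pi(T))\| = \rho(\phi(\pi(T)))$. Because $\phi$ is isometric and the spectrum in a closed $*$-subalgebra of a C*-algebra agrees with the spectrum in the ambient algebra, this gives
\[
\|T\|_{ess} \;=\; \|\pi(T)\| \;=\; \rho(\pi(T)) \;=\; \rho_{ess}(T).
\]
Combined with Step 1, the three quantities coincide.

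\textbf{Main obstacle.} The delicate point is the passage from the Hilbert-space formulation of Stampfli's theorem to the abstract statement that every hyponormal element of a C*-algebra satisfies $\|a\| = \rho(a)$. This is not genuinely difficult, but it requires invoking Gelfand--Naimark together with inverse-closedness to ensure that spectral radius is preserved under the representation $\phi$; the rest of the argument is essentially bookkeeping around $\pi$.
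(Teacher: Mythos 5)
Your proof is correct and its core coincides with the paper's argument: both reduce the proposition to showing that $\pi(T)$ is a hyponormal element of the Calkin algebra and then invoke Stampfli's theorem to conclude $\rho(\pi(T))=\|\pi(T)\|$. Your Step 3 is in fact more careful than the paper's, which applies Stampfli's Hilbert-space statement directly to a Calkin-algebra element without spelling out the Gelfand--Naimark representation and spectral permanence needed to justify this; your Step 2 uses preservation of positivity under $*$-homomorphisms where the paper cites the spectral inclusion result of Fillmore--Stampfli--Williams, and both are fine.

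The one place where you genuinely diverge is the treatment of $\gamma(T)$. You invoke the unconditional Hilbert-space identity $\gamma(T)=\|T\|_{ess}$ and attribute it to Nussbaum; the cited paper of Nussbaum proves the essential spectral radius formula (\ref{esslim=inf}), not this identity, so the attribution is shaky. The identity itself is true on a Hilbert space (cover $T(D_{\mathcal{H}})$ by finitely many balls $u_i+\delta D_{\mathcal{H}}$, let $P$ be the orthogonal projection onto $\mathrm{span}\{u_i\}$, and observe that $\|(I-P)T\|\le\delta$ while $PT$ has finite rank), so your argument stands, but you should either supply this short proof or locate a correct reference. The paper sidesteps the issue: it uses only the easy inequality $\gamma(T)\le\|T\|_{ess}$ together with $\rho_{ess}(T)\le\gamma(T)$, which follows from (\ref{esslim=inf}) and submultiplicativity of $\gamma$, and then squeezes, so that $\gamma(T)=\|T\|_{ess}$ is obtained only for hyponormal $T$. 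Your route buys the stronger general equality; the paper's route requires one less external fact.
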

\begin{proof} 
For any $T \in B(\mathcal{H})$ and $K \in K(\mathcal{H})$ 
it is clear that $\gamma(T) = \gamma(T+K) \le \|T+K\|$.  Therefore $\gamma(T) \le \|T\|_{ess}$.  By (\ref{esslim=inf}) (\cite[Theorem 1]{Nussbaum70})
and since $\gamma(T^n) \le \gamma(T)^n$ for all $n$, it follows that
$$\rho_{ess}(T) \le \gamma(T) \le \|T\|_{ess}.$$

It remains to show that $\rho_{ess}(T) = \|T\|_{ess}$ when $T$ is hyponormal.  
Since the spectrum of $\pi(T^*T-TT^*)$ is a subset of the spectrum of $T^*T-TT^*$ (see e.g., \cite[Theorem 2.3]{FillmoreStampfliWilliams72}), it follows that $\pi(T^*T-TT^*)$ is positive and therefore $\pi(T)$ is hyponormal whenever $T$ is hyponormal. In that case, \cite[Theorem 1]{Stampfli62} says that $\rho(\pi(T)) = \|\pi(T)\|$ and therefore $\rho_{ess}(T) = \|T\|_{ess}$. 
\end{proof}

\begin{lemma}
\label{Hilb1}
Let $\mathcal{H}$ be a Hilbert space and $T \in B(\mathcal{H})$. 
Then $\rho_{ess}(T^*T) =\gamma(T^*T)= \gamma(T)^2$.  Consequently, Equalities (\ref{Hilbert}) and $\gamma (T)=\gamma (T^*)$ hold.
\end{lemma}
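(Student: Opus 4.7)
The plan is to leverage Proposition \ref{prop:NussbaumStampfli} applied to the self-adjoint operator $T^*T$, together with the $C^*$-identity in the Calkin algebra, and then to identify the essential norm with the Hausdorff measure of non-compactness in the Hilbert space setting.

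First I would observe that $T^*T$ is self-adjoint, hence normal and in particular hyponormal. Proposition \ref{prop:NussbaumStampfli} then immediately yields
\[
\rho_{ess}(T^*T) \;=\; \gamma(T^*T) \;=\; \|T^*T\|_{ess}.
\]
Next, because the Calkin algebra $B(\mathcal{H})/K(\mathcal{H})$ is a $C^*$-algebra and $\pi$ is a $*$-homomorphism, the $C^*$-identity gives $\|T^*T\|_{ess} = \|\pi(T)^*\pi(T)\| = \|\pi(T)\|^2 = \|T\|_{ess}^2$, so $\rho_{ess}(T^*T) = \gamma(T^*T) = \|T\|_{ess}^2$.

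The key remaining step is to upgrade $\|T\|_{ess}$ to $\gamma(T)$, i.e., to prove $\|T\|_{ess} = \gamma(T)$ on a Hilbert space. The inequality $\gamma(T) \le \|T\|_{ess}$ is immediate (and was already used in the proof of Proposition \ref{prop:NussbaumStampfli}). For the reverse inequality I would fix $c > \gamma(T)$, choose a finite $c$-net $\{y_1,\dots,y_m\}$ for $T(D_\mathcal{H})$, and let $P$ be the orthogonal projection onto $E = \mathrm{span}\{y_1,\dots,y_m\}$. The Hilbert-space crucial point is that $PTx$ is the nearest point of $E$ to $Tx$, so for every $x \in D_\mathcal{H}$ one has $\|Tx - PTx\| \le \|Tx - y_i\| < c$, giving $\|(I-P)T\| \le c$. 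Since $PT$ is finite rank (hence compact), this gives $\|T\|_{ess} \le c$, and letting $c \downarrow \gamma(T)$ we conclude $\|T\|_{ess} = \gamma(T)$. This is the main (though not difficult) obstacle, as everything else is formal; the Hilbert-space geometry, via the nearest-point property of orthogonal projections, is essential here—the standard Banach-space argument only gives a factor of $2$. Combining the two steps yields $\rho_{ess}(T^*T) = \gamma(T^*T) = \gamma(T)^2$.

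Finally, for the ``consequently'' part, applying the same identity with $T$ replaced by $T^*$ gives $\rho_{ess}(TT^*) = \gamma(TT^*) = \gamma(T^*)^2$. The general commutation property $\rho_{ess}(AB) = \rho_{ess}(BA)$ (valid since $\sigma(\pi(A)\pi(B)) \cup \{0\} = \sigma(\pi(B)\pi(A)) \cup \{0\}$ in the Calkin algebra) gives $\rho_{ess}(T^*T) = \rho_{ess}(TT^*)$, so $\gamma(T)^2 = \gamma(T^*)^2$ and hence $\gamma(T) = \gamma(T^*)$. Assembling everything then delivers the full chain (\ref{Hilbert}).
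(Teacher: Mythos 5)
Your proof is correct, but it follows a genuinely different route from the paper's. The paper proves the identity via the polar decomposition $T=UN$ with $N=\sqrt{T^*T}$: it applies Proposition \ref{prop:NussbaumStampfli} to the self-adjoint operator $N$ to get $\rho_{ess}(N)=\gamma(N)$, then sandwiches $\gamma(T)$ between $\gamma(N)$ using $\gamma(UN)\le\gamma(N)$ and $\gamma(NU^*T)\le\gamma(N)\gamma(T)$ (both partial-isometry factors having norm at most $1$), never needing to identify $\gamma(T)$ with $\|T\|_{ess}$ for a general $T$. You instead apply Proposition \ref{prop:NussbaumStampfli} directly to the hyponormal operator $T^*T$, invoke the $C^*$-identity in the Calkin algebra to get $\|T^*T\|_{ess}=\|T\|_{ess}^2$, and then supply the one genuinely non-formal ingredient: the equality $\gamma(T)=\|T\|_{ess}$ for \emph{every} $T\in B(\mathcal{H})$, proved via the nearest-point property of orthogonal projections onto the span of a finite $c$-net. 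That argument is sound (the inequality $\gamma(T)\le\|T\|_{ess}$ is already in the paper's proof of Proposition \ref{prop:NussbaumStampfli}, and your reverse inequality $\|(I-P)T\|\le c$ with $PT$ finite rank is exactly the standard Hilbert-space refinement of the Banach-space bound $\|T\|_{ess}\le 2\gamma(T)$). Your route buys a strictly stronger byproduct — $\gamma=\|\cdot\|_{ess}$ on all of $B(\mathcal{H})$, not merely for hyponormal operators as in Proposition \ref{prop:NussbaumStampfli} — and makes the rest purely formal; the paper's route avoids proving that general identity by exploiting the polar decomposition and submultiplicativity of $\gamma$. Your derivation of the "consequently" part (replace $T$ by $T^*$ and use $\rho_{ess}(AB)=\rho_{ess}(BA)$) matches the paper's in substance.
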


\begin{proof}
By the polar decomposition theorem for bounded operators on a Hilbert space, $T = UN$ where $U$ is a partial isometry and $N = \sqrt{T^*T}$. It follows immediately that $\rho_{ess}(T^*T) = \rho_{ess}(N^2) = \rho_{ess}(N)^2$.   

By Proposition \ref{prop:NussbaumStampfli}, $\rho_{ess} (N) = \gamma(N)$. Since $U$ is a partial isometry, $\gamma(U) \le \|U\| \le 1$.  So we have:
\be
\gamma(T)^2 = \gamma(UN)^2 \le \gamma(N)^2 = \rho_{ess}(N)^2 = \rho_{ess}(T^*T).
\label{one}
\ee
It remains to prove the reverse inequality.  Since $\gamma(U^*) \le \|U^*\| =\|U\| \le 1$, we have
$$\gamma(T^*T) = \gamma(N U^* T) \le \gamma(N) \gamma(T).$$
Since $\rho_{ess}(T^*T) = \gamma(T^*T) = \gamma(N)^2$,
we conclude that $\rho_{ess}(T^*T) \le \gamma(T)^2$, which together with (\ref{one}) establishes
$\rho_{ess}(T^*T) =\gamma(T^*T)= \gamma(T)^2$. By (\ref{again}) and Proposition \ref{prop:NussbaumStampfli} also the remaining equalities in (\ref{Hilbert}) follow. The equality $\gamma (T)=\gamma (T^*)$ follows from (\ref{Hilbert}).
\end{proof}

Let $\Sigma$ be a bounded set of bounded operators on a Hilbert space $\mathcal{H}$ and let us denote
$$\gamma (\Sigma) = \sup _{T \in \Sigma } \gamma (T) \;\; \mathrm{and}\;\; \|\Sigma\| = \sup _{T \in \Sigma } \|T\|. $$
By $\Sigma^*$  we denote a bounded set of bounded operators on $\mathcal{H}$ defined by $\Sigma ^*=\{T^*: T \in\Sigma \}.$ The following lemma is an essential version of \cite[Lemma 3.1.]{B23+} and it also slightly generalizes it (with a similar proof).

\begin{lemma} Let $\mathcal{H}$ be a Hilbert space and $\Sigma \subset B(\mathcal{H})$ be a bounded set. Then 
\be
\gamma (\Sigma )=\rho _{ess}(\Sigma^*\Sigma)^{1/2}=\rho _{ess}(\Sigma\Sigma^*)^{1/2}=\hat{\rho} _{ess}(\Sigma^*\Sigma)^{1/2}=\hat{\rho} _{ess} (\Sigma \Sigma^*)^{1/2},
\label{tool_ess}
\ee
$\gamma (\Sigma^*)= \gamma (\Sigma)$ 
and
\be
\|\Sigma\|=\rho(\Sigma^*\Sigma)^{1/2}=\rho(\Sigma\Sigma^*)^{1/2}=\hat{\rho}(\Sigma^*\Sigma)^{1/2}=\hat{\rho}(\Sigma \Sigma^*)^{1/2}.
\label{tool}
\ee
\end{lemma}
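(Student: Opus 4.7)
The plan is to reduce everything to Lemma \ref{Hilb1} at the level of individual operators and then lift to sets using the definitions of $\rho_{ess}(\Sigma)$, $\hat{\rho}_{ess}(\Sigma)$, $\rho(\Sigma)$, $\hat{\rho}(\Sigma)$ together with property (\ref{again}). The equality $\gamma(\Sigma^*)=\gamma(\Sigma)$ is immediate by taking $\sup_{T\in\Sigma}$ in the identity $\gamma(T^*)=\gamma(T)$ from Lemma \ref{Hilb1}. I also record two trivial facts that will be used to close each chain: for any bounded set $\Sigma$, $\rho_{ess}(\Sigma)\le\hat{\rho}_{ess}(\Sigma)$ and $\rho(\Sigma)\le\hat{\rho}(\Sigma)$. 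Both follow at once from $\rho_{ess}(A)\le\gamma(A)$ and $\rho(A)\le\|A\|$ for $A\in\Sigma^m$, taking $\sup$ and then the $m$-th root limit.

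For the essential chain in (\ref{tool_ess}) I will sandwich $\gamma(\Sigma)^2$ between $\rho_{ess}(\Sigma^*\Sigma)$ and $\hat{\rho}_{ess}(\Sigma^*\Sigma)$. The lower bound $\rho_{ess}(\Sigma^*\Sigma)\ge\gamma(\Sigma)^2$ is clear: for any $T\in\Sigma$, $T^*T\in\Sigma^*\Sigma$, and Lemma \ref{Hilb1} gives $\rho_{ess}(T^*T)=\gamma(T)^2$; taking $\sup$ over $T$ and using the definition of $\rho_{ess}(\Sigma^*\Sigma)$ yields the claim. The upper bound $\hat{\rho}_{ess}(\Sigma^*\Sigma)\le\gamma(\Sigma)^2$ is the step that requires the most care: each $C\in(\Sigma^*\Sigma)^m$ has the alternating form $C=A_1^* B_1 A_2^* B_2\cdots A_m^* B_m$ with $A_i,B_i\in\Sigma$, so by submultiplicativity of $\gamma(\cdot)$ and $\gamma(A_i^*)=\gamma(A_i)$ we obtain $\gamma(C)\le \gamma(\Sigma)^{2m}$, hence $[\sup_{C\in(\Sigma^*\Sigma)^m}\gamma(C)]^{1/m}\le\gamma(\Sigma)^2$. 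Letting $m\to\infty$ gives $\hat{\rho}_{ess}(\Sigma^*\Sigma)\le\gamma(\Sigma)^2$. Together with $\rho_{ess}\le\hat{\rho}_{ess}$, this forces
$$\gamma(\Sigma)^2=\rho_{ess}(\Sigma^*\Sigma)=\hat{\rho}_{ess}(\Sigma^*\Sigma).$$
Finally, property (\ref{again}) applied with $\Psi=\Sigma^*$ gives $r(\Sigma^*\Sigma)=r(\Sigma\Sigma^*)$ for $r\in\{\rho_{ess},\hat{\rho}_{ess}\}$, completing the chain (\ref{tool_ess}).

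The norm chain (\ref{tool}) is proved by the same template with the C*-identity replacing Lemma \ref{Hilb1}. For each $T\in\Sigma$ the operator $T^*T$ is positive self-adjoint, so $\rho(T^*T)=\|T^*T\|=\|T\|^2$; taking $\sup$ over $T$ gives $\rho(\Sigma^*\Sigma)\ge\|\Sigma\|^2$. For the converse, any $C=A_1^*B_1\cdots A_m^*B_m\in(\Sigma^*\Sigma)^m$ satisfies $\|C\|\le\|\Sigma\|^{2m}$ using $\|A_i^*\|=\|A_i\|\le\|\Sigma\|$ and submultiplicativity of the operator norm; hence $\hat{\rho}(\Sigma^*\Sigma)\le\|\Sigma\|^2$. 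Combined with $\rho\le\hat{\rho}$, this collapses everything to equality, and the swap between $\Sigma^*\Sigma$ and $\Sigma\Sigma^*$ is again handled by (\ref{again}).

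I do not anticipate a serious obstacle. The main point is the careful bookkeeping of the alternating factorization of an element of $(\Sigma^*\Sigma)^m$ and the systematic use of $\gamma(T^*)=\gamma(T)$ (respectively $\|T^*\|=\|T\|$) to bound each pair of adjacent factors by $\gamma(\Sigma)^2$ (respectively $\|\Sigma\|^2$). Everything else is a mechanical application of the definitions of the joint and generalized (essential) spectral radii and of the identities already collected in the preliminaries.
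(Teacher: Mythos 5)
Your proposal is correct and follows essentially the same route as the paper's proof: both reduce to Lemma \ref{Hilb1}, get the lower bound from $\rho_{ess}(T^*T)=\gamma(T)^2$ for each $T\in\Sigma$, and close the sandwich $\gamma(\Sigma)^2\le\rho_{ess}(\Sigma^*\Sigma)\le\hat{\rho}_{ess}(\Sigma^*\Sigma)\le\gamma(\Sigma)^2$ using submultiplicativity of $\gamma$ together with $\gamma(T^*)=\gamma(T)$. The only cosmetic difference is that the paper phrases the upper bound as $\hat{\rho}_{ess}(\Sigma^*\Sigma)\le\gamma(\Sigma^*\Sigma)\le\gamma(\Sigma^*)\gamma(\Sigma)$ rather than factoring elements of $(\Sigma^*\Sigma)^m$ into alternating pairs, which is the same estimate.
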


\begin{proof}
First we prove (\ref{tool_ess}). By Lemma \ref{Hilb1} we have
\begin{eqnarray}
\nonumber
& & \gamma (\Sigma)=\sup_{T\in \Sigma} \gamma (T)=\sup_{T\in \Sigma}\rho _{ess}(T^*T)^{\frac{1}{2}}=(\sup_{T \in \Sigma}\rho _{ess} ((T^*T)^m)^{\frac{1}{m}})^{\frac{1}{2}} \\
 &\le & \left (\sup _{m\in \NN}\sup_{S\in( \Sigma^* \Sigma)^m}\rho_{ess} (S)^{\frac{1}{m}}\right)^{\frac{1}{2}} 
\nonumber
=\rho  _{ess} ( \Sigma^* \Sigma)^{\frac{1}{2}}\le\hat{\rho}_{ess}( \Sigma^* \Sigma)^{\frac{1}{2}}\le \gamma ( \Sigma^* \Sigma )^{\frac{1}{2}}\\
&\le&(\gamma ( \Sigma^*) \gamma ( \Sigma))^{\frac{1}{2}}= \gamma ( \Sigma),
\nonumber
\end{eqnarray} 
which  proves  $\gamma (\Sigma )=\rho _{ess}(\Sigma^*\Sigma)^{1/2}=\hat{\rho} _{ess}(\Sigma^*\Sigma)^{1/2}$. Other equalities in (\ref{tool_ess}) follow again by Lemma \ref{Hilb1}. Equality $\gamma (\Sigma^*)= \gamma (\Sigma)$ follows from  (\ref{tool_ess}) (or also from Lemma \ref{Hilb1}). 

Equalities (\ref{tool}) are proved similarly.
\end{proof}
By applying (\ref{tool_ess}) we obtain the following result, which is an essential version of 
\cite[Theorem 3.2]{B23+} and is proved in a  similar way. For the sake of clarity we include the proof.

\begin{theorem}
\label{first_ess}
Let $\Psi_1, \ldots ,\Psi_m$ be bounded sets of positive kernel operators on $L^2(X, \mu)$  and let  $r \in\{\rho _{ess},\hat{\rho} _{ess}\}$.

If $m$ is even, then
\begin{align}
\nonumber
\gamma (\Psi_1^{(\frac{1}{m})}\circ\cdots\circ\Psi_m^{(\frac{1}{m})}) \le (r(\Psi_1^*\Psi_2\Psi_3^*\Psi_4\cdots\Psi_{m-1}^*\Psi_m)r(\Psi_1\Psi_2^*\Psi_3\Psi_4^*\cdots\Psi_{m-1}\Psi_m^*))^{\frac{1}{2m}}\\
=(r(\Psi_1^*\Psi_2\Psi_3^*\Psi_4\cdots\Psi_{m-1}^*\Psi_m)r(\Psi_m\Psi_{m-1}^*\cdots\Psi_4\Psi_3^*\Psi_2\Psi_1^*))^{\frac{1}{2m}}\;\;\;\;\;\;\;\;\;\;\;\;\;\;\;\;\;\;\;\;\;\;\;\;\;\;\;\;\;\;\;
\label{ess_ineq1}
\end{align}

If $m$ is odd, then
\begin{align}
\nonumber
\gamma (\Psi_1^{(\frac{1}{m})}\circ\cdots\circ\Psi_m^{(\frac{1}{m})})\;\;\;\;\;\;\;\;\;\;\;\;\;\;\;\;\;\;\;\;\;\;\;\;\;\;\;\;\;\;\;\;\;\;\;\;\;\;\;\;\;\;\;\;\;\;\;\;\;\;\;\;\;\;\;\;\;\;\;\;\;\;\;\;\;\;\;\;\;\;\;\;\;\;\;\;\;\;\;\;\;\;\;\;\;\;\;\;\;\;\;\; \\
\le r^{\frac{1}{2m}}(\Psi_1\Psi_2^*\Psi_3\Psi_4^*\cdots\Psi_{m-2}\Psi_{m-1}^*\Psi_m\Psi_1^*\Psi_2\Psi_3^*\Psi_4\cdots\Psi_{m-2}^*\Psi_{m-1}\Psi_m^*)\;\;\;\;\;\;\;\;\;\;\;\;\;\;\;\;\;\;\;\;\;
\label{ess_ineq2}
\end{align}
\end{theorem}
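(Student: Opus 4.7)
The plan is to combine the Hilbert-space identity $\gamma(\Sigma)=r(\Sigma^{*}\Sigma)^{1/2}$ from (\ref{tool_ess}) with the essential analog of the ordinary-product-of-Hadamard-geometric-means bound (\ref{lepa_ess}), exploiting commutativity of the Hadamard product to insert an independent cyclic-shift permutation in each factor. Set $\Sigma:=\Psi_{1}^{(1/m)}\circ\cdots\circ\Psi_{m}^{(1/m)}$. Since on $L^{2}$ the kernel adjoint commutes with Hadamard powers, $\Sigma^{*}=(\Psi_{1}^{*})^{(1/m)}\circ\cdots\circ(\Psi_{m}^{*})^{(1/m)}$. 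By (\ref{tool_ess}) and (\ref{again}), for every $k\in\NN$ and every $r\in\{\rho_{ess},\hat\rho_{ess}\}$,
\[
\gamma(\Sigma)^{2k}=r(\Sigma^{*}\Sigma)^{k}=r\!\bigl((\Sigma^{*}\Sigma)^{k}\bigr).
\]

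The heart of the argument is that, by commutativity of $\circ$, $\Sigma$ (resp.\ $\Sigma^{*}$) also equals $\Psi_{\tau(1)}^{(1/m)}\circ\cdots\circ\Psi_{\tau(m)}^{(1/m)}$ (resp.\ starred) for any permutation $\tau$ of $\{1,\dots,m\}$. I expand $(\Sigma^{*}\Sigma)^{k}$ as $2k$ alternating Hadamard-geometric-mean factors and use the cyclic shift $\sigma_{i}(j):=((i+j-2)\bmod m)+1$ in the $i$-th factor. Applying the essential analog of the first inequality in (\ref{lepa_ess}) and then the essential Hadamard-geometric-mean bound (\ref{gsh_ref_ess}) to the resulting Hadamard geometric mean yields
\[
r\!\bigl((\Sigma^{*}\Sigma)^{k}\bigr)\;\le\;\prod_{j=1}^{m} r(Q_{j})^{1/m},
\]
where $Q_{j}$ is the bounded set $\prod_{i=1}^{2k}\Psi_{\sigma_{i}(j)}^{(\epsilon_{i})}$ with $\epsilon_{i}=*$ for odd $i$ and no star for even $i$. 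The essential analog of (\ref{lepa_ess}) is valid here for positive kernel operators on $L^{2}(X,\mu)$: the underlying operator inequality (\ref{norm2}) only needs $\sum\alpha_{j}=1$, and the $r$-bounds come from (\ref{meas_noncomp})--(\ref{ess_spectral}) under our order-continuous-norm hypothesis, so the passage to bounded sets proceeds exactly as in the proof of Theorem~\ref{finally_ess}.

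For $m$ even I take $k=m/2$, so each $Q_{j}$ is a product of $m$ alternating starred/unstarred factors. Unwinding the cyclic shift, for odd $j$ the set $Q_{j}$ is a cyclic rotation of $\Psi_{1}^{*}\Psi_{2}\Psi_{3}^{*}\cdots\Psi_{m-1}^{*}\Psi_{m}$, while for even $j$ it is a cyclic rotation of $\Psi_{1}\Psi_{2}^{*}\cdots\Psi_{m-1}\Psi_{m}^{*}$. Invoking cyclic invariance (\ref{again}) drops the rotations; with $m/2$ terms of each type the product $\prod_j r(Q_j)^{1/m}$ becomes $\bigl(r(\Psi_{1}^{*}\Psi_{2}\cdots\Psi_{m})\,r(\Psi_{1}\Psi_{2}^{*}\cdots\Psi_{m}^{*})\bigr)^{1/2}$, and the $(2m)$-th root of the relation $\gamma(\Sigma)^{m}=r((\Sigma^{*}\Sigma)^{m/2})$ yields (\ref{ess_ineq1}). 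The equality with $r(\Psi_{m}\Psi_{m-1}^{*}\cdots\Psi_{1}^{*})$ then follows from $r(\Lambda^{*})=r(\Lambda)$, which is a consequence of Lemma~\ref{Hilb1} (or directly of (\ref{tool_ess})).

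For $m$ odd I take $k=m$, so each $Q_{j}$ is a product of $2m$ alternating factors. Because the parity of $m$ flips the starred/unstarred pattern when the cyclic shift wraps around modulo $m$, all $m$ sets $Q_{j}$ turn out to be cyclic rotations of the single product $\Psi_{1}\Psi_{2}^{*}\Psi_{3}\cdots\Psi_{m-1}^{*}\Psi_{m}\Psi_{1}^{*}\Psi_{2}\Psi_{3}^{*}\cdots\Psi_{m-2}^{*}\Psi_{m-1}\Psi_{m}^{*}$ appearing in (\ref{ess_ineq2}), so $\prod_{j}r(Q_{j})^{1/m}$ collapses to $r(Q)$, and the $(2m)$-th root of $\gamma(\Sigma)^{2m}=r((\Sigma^{*}\Sigma)^{m})$ gives (\ref{ess_ineq2}). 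The main obstacle is purely combinatorial: verifying that the cyclic-shift choice $\sigma_{i}$ sorts the products $Q_{j}$ into two rotation orbits in the even case and into a single orbit in the odd case, and tracking how the $*/$no-$*$ pattern behaves when the index wraps around modulo $m$.
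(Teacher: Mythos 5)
Your proposal is correct and follows essentially the same route as the paper: both expand $(\Sigma^*\Sigma)^{m/2}$ (for $m$ even) or $(\Sigma^*\Sigma)^{m}$ (for $m$ odd) as alternating starred/unstarred Hadamard geometric means with cyclically shifted factor orderings, bound the result via the product-of-Hadamard-geometric-means inequality, and collapse the resulting cyclic rotations using (\ref{tool_ess}) and (\ref{again}). The only cosmetic difference is that you re-derive the needed bounded-set inequality for kernel operators from (\ref{norm2}) and (\ref{meas_noncomp})--(\ref{ess_spectral}) along the lines of Theorem \ref{finally_ess}, where the paper simply cites the known case $\sum_j\alpha_j=1$ from the literature.
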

\begin{proof}
  If $m$ is even, then we have
$$\left(\left(\Psi_1^{(\frac{1}{m})}\circ\Psi_2^{(\frac{1}{m})}\circ\cdots\circ\Psi_m^{(\frac{1}{m})}\right)^*\left(\Psi_1^{(\frac{1}{m})}\circ\Psi_2^{(\frac{1}{m})}\circ\cdots\circ\Psi_m^{(\frac{1}{m})}\right)\right)^{\frac{m}{2}}=$$
$$\left((\Psi_1^*)^{(\frac{1}{m})}\circ(\Psi_2^*)^{(\frac{1}{m})}\circ\cdots\circ(\Psi_m^*)^{(\frac{1}{m})}\right)\left(\Psi_2^{(\frac{1}{m})}\circ\Psi_3^{(\frac{1}{m})}\circ\cdots\circ\Psi_1^{(\frac{1}{m})}\right)$$
$$\left((\Psi_3^*)^{(\frac{1}{m})}\circ(\Psi_4^*)^{(\frac{1}{m})}\circ\cdots\circ(\Psi_2^*)^{(\frac{1}{m})}\right)\left(\Psi_4^{(\frac{1}{m})}\circ\Psi_5^{(\frac{1}{m})}\circ\cdots\circ\Psi_3^{(\frac{1}{m})}\right)\cdots$$
$$\left((\Psi_{m-1}^*)^{(\frac{1}{m})}\circ(\Psi_m^*)^{(\frac{1}{m})}\circ\cdots\circ(\Psi_{m-2}^*)^{(\frac{1}{m})}\right)\left(\Psi_m^{(\frac{1}{m})}\circ\Psi_1^{(\frac{1}{m})}\circ\cdots\circ\Psi_{m-1}^{(\frac{1}{m})}\right).$$

It follows from (\ref{tool_ess}), (\ref{again}) and \cite[Theorem 3.2(i)]{BP22b} that
\begin{align}
\nonumber
\gamma (\Psi_1^{(\frac{1}{m})}\circ\Psi_2^{(\frac{1}{m})}\circ\cdots\circ\Psi_m^{(\frac{1}{m})})^{m}\;\;\;\;\;\;\;\;\;\;\;\;\;\;\;\;\;\;\;\;\;\;\;\;\;\;\;\;\;\;\;\;\;\;\;\;\;\;\;\;\;\;\;\;\;\;\;\;\;\;\;\;\;\;\;\;\;\;\;\;\;\;\;\\
\nonumber
=r\left(\left(\Psi_1^{(\frac{1}{m})}\circ\Psi_2^{(\frac{1}{m})}\circ\cdots\circ\Psi_m^{(\frac{1}{m})}\right)^*\left(\Psi_1^{(\frac{1}{m})}\circ\Psi_2^{(\frac{1}{m})}\circ\cdots\circ\Psi_m^{(\frac{1}{m})}\right)\right)^{\frac{m}{2}}\;\;\;\;\\
\label{vmesna1}
\le r(\Sigma)\le r(\Psi_1^*\Psi_2\Psi_3^*\Psi_4\cdots\Psi_{m-1}^*\Psi_m)^{\frac{1}{m}}r(\Psi_2^*\Psi_3\Psi_4^*\Psi_5\cdots\Psi_m^*\Psi_1)^{\frac{1}{m}}\cdots\\
\nonumber
r(\Psi_{m-1}^*\Psi_m\Psi_1^*\Psi_2\cdots\Psi_{m-3}^*\Psi_{m-2})^{\frac{1}{m}}r(\Psi_m^*\Psi_1\Psi_2^*\Psi_3\cdots\Psi_{m-2}^*\Psi_{m-1})^{\frac{1}{m}}\;\;\;\\
\nonumber
=r^{\frac{1}{2}}(\Psi_1^*\Psi_2\Psi_3^*\Psi_4\cdots\Psi_{m-1}^*\Psi_m)r^{\frac{1}{2}}(\Psi_1\Psi_2^*\Psi_3\Psi_4^*\cdots\Psi_{m-1}\Psi_m^*)\;\;\;\;\;\;\;\;\;\;\;\;\;\;\\
\nonumber
(r(\Psi_1^*\Psi_2\Psi_3^*\Psi_4\cdots\Psi_{m-1}^*\Psi_m)r(\Psi_m\Psi_{m-1}^*\cdots\Psi_4\Psi_3^*\Psi_2\Psi_1^*))^{\frac{1}{2}},\;\;\;\;\;\;\;\;\;\;\;\;\;\;\;\;\;
\end{align}
where 
$$\Sigma:=(\Psi_1^*\Psi_2\Psi_3^*\Psi_4\cdots\Psi_{m-1}^*\Psi_m)^{(\frac{1}{m})}\circ(\Psi_2^*\Psi_3\Psi_4^*\Psi_5\cdots\Psi_m^*\Psi_1)^{(\frac{1}{m})}\circ\cdots\circ$$
$$(\Psi_{m-1}^*\Psi_m\Psi_1^*\Psi_2\cdots\Psi_{m-3}^*\Psi_{m-2})^{(\frac{1}{m})}\circ(\Psi_m^*\Psi_1\Psi_2^*\Psi_3\cdots\Psi_{m-2}^*\Psi_{m-1})^{(\frac{1}{m})}$$
which completes the proof of (\ref{ess_ineq1}).

If $m$ is odd, we have
\begin{align}
\nonumber
\left(\left(\Psi_1^{(\frac{1}{m})}\circ\Psi_2^{(\frac{1}{m})}\circ\cdots\circ\Psi_m^{(\frac{1}{m})}\right)^*\left(\Psi_1^{(\frac{1}{m})}\circ\Psi_2^{(\frac{1}{m})}\circ\cdots\circ\Psi_m^{(\frac{1}{m})}\right)\right)^{m}\;\;\;\;\;\;\;\;\;\;\;\;\;\;\;\;\;\;\;\;\;\;\;\;\;\;\;\\
\nonumber
=\left((\Psi_1^*)^{(\frac{1}{m})}\circ(\Psi_2^*)^{(\frac{1}{m})}\circ\cdots\circ(\Psi_m^*)^{(\frac{1}{m})}\right)\left(\Psi_2^{(\frac{1}{m})}
\circ\cdots\circ\Psi_m^{(\frac{1}{m})}\circ\Psi_1^{(\frac{1}{m})}\right)\;\;\;\;\;\;\;\;\;\;\;\;\;\;\;\;\;\;\\
\nonumber
\left((\Psi_3^*)^{(\frac{1}{m})}\circ(\Psi_4^*)^{(\frac{1}{m})}\circ\cdots\circ(\Psi_2^*)^{(\frac{1}{m})}\right)\left(\Psi_4^{(\frac{1}{m})}\circ\Psi_5^{(\frac{1}{m})}\circ\cdots\circ\Psi_3^{(\frac{1}{m})}\right)\cdots\;\;\;\;\;\;\;\;\;\;\;\;\;\;\;\;\;\;\\
\nonumber
\left((\Psi_{m-2}^*)^{(\frac{1}{m})}\circ(\Psi_{m-1}^*)^{(\frac{1}{m})}\circ\cdots\circ(\Psi_{m-3}^*)^{(\frac{1}{m})}\right)\left(\Psi_{m-1}^{(\frac{1}{m})}\circ\Psi_m^{(\frac{1}{m})}\circ\cdots\circ\Psi_{m-2}^{(\frac{1}{m})}\right)\;\;\;\;\;\;\;\;\;\\
\nonumber
\left((\Psi_m^*)^{(\frac{1}{m})}\circ(\Psi_1^*)^{(\frac{1}{m})}\circ\cdots\circ(\Psi_{m-1}^*)^{(\frac{1}{m})}\right)\left(\Psi_1^{(\frac{1}{m})}\circ\Psi_2^{(\frac{1}{m})}\circ\cdots\circ\Psi_{m-1}^{(\frac{1}{m})}\circ\Psi_m^{(\frac{1}{m})}\right)\;\;\;\;\;\;\\
\nonumber
\left((\Psi_2^*)^{(\frac{1}{m})}\circ(\Psi_3^*)^{(\frac{1}{m})}\circ\cdots\circ(\Psi_1^*)^{(\frac{1}{m})}\right)\left(\Psi_3^{(\frac{1}{m})}\circ\Psi_4^{(\frac{1}{m})}\circ\cdots\circ\Psi_1^{(\frac{1}{m})}\circ\Psi_2^{(\frac{1}{m})}\right)\cdots\;\;\;\;\;\;\;\\
\nonumber
\left((\Psi_{m-1}^*)^{(\frac{1}{m})}\circ(\Psi_{m}^*)^{(\frac{1}{m})}\circ\cdots\circ(\Psi_{m-2}^*)^{(\frac{1}{m})}\right)\left(\Psi_m^{(\frac{1}{m})}\circ\Psi_1^{(\frac{1}{m})}\circ\cdots\circ\Psi_{m-2}^{(\frac{1}{m})}\circ\Psi_{m-1}^{(\frac{1}{m})}\right)\\
\end{align}
It follows  from (\ref{tool_ess}), (\ref{again}) and \cite[Theorem 3.2(i)]{BP22b} 
that
\begin{align}
\nonumber
\gamma (\Psi_1^{(\frac{1}{m})}\circ\Psi_2^{(\frac{1}{m})}\circ\cdots\circ\Psi_m^{(\frac{1}{m})})^{2m}=\;\;\;\;\;\;\;\;\;\;\;\;\;\;\;\;\;\;\;\;\;\;\;\;\;\;\;\;\;\;\;\;\;\;\;\;\;\;\;\;\;\;\;\;\;\;\;\;\;\;\;\;\;\;\;\;\;\;\;\;\\
\nonumber
r\left(\left(\Psi_1^{(\frac{1}{m})}\circ\Psi_2^{(\frac{1}{m})}\circ\cdots\circ\Psi_m^{(\frac{1}{m})}\right)^*\left(\Psi_1^{(\frac{1}{m})}\circ\Psi_2^{(\frac{1}{m})}\circ\cdots\circ\Psi_m^{(\frac{1}{m})}\right)\right)^m\le r(\Omega) \le \;\;\\
\nonumber
r(\Psi_1^*\Psi_2\Psi_3^*\Psi_4\cdots\Psi_{m-1}\Psi_m^*\Psi_1\Psi_2^*\Psi_3\Psi_4^*\cdots\Psi_{m-1}^*\Psi_m)=\;\;\;\;\;\;\;\;\;\;\;\;\;\;\;\;\;\;\;\;\;\;\;\;\;\;\;\;\\
\nonumber
r(\Psi_1\Psi_2^*\Psi_3\cdots\Psi_{m-1}^*\Psi_m\Psi_1^*\Psi_2\cdots\Psi_m^*)\;\;\;\;\;\;\;\;\;\;\;\;\;\;\;\;\;\;\;\;\;\;\;\;\;\;\;\;\;\;\;\;\;\;\;\;\;\;\;\;\;\;\;\;\;\;\;\;\;\;\;\;\;
\end{align}
where
$$\Omega:=(\Psi_1^*\Psi_2\Psi_3^*\Psi_4\cdots\Psi_{m-1}\Psi_m^*\Psi_1\Psi_2^*\Psi_3\cdots\Psi_{m-1}^*\Psi_m)^{(\frac{1}{m})}\circ$$
$$(\Psi_2^*\Psi_3\Psi_4^*\Psi_5\cdots\Psi_m\Psi_1^*\Psi_2\Psi_3^*\cdots\Psi_m^*\Psi_1)^{(\frac{1}{m})}\circ\cdots\circ$$
$$(\Psi_m^*\Psi_1\Psi_2^*\Psi_3\cdots\Psi_{m-1}^*\Psi_m\Psi_1^*\Psi_2\cdots\Psi_{m-2}^*\Psi_{m-1})^{(\frac{1}{m})}$$
which proves (\ref{ess_ineq2}).
\end{proof}
The following corollary follows from (\ref{ess_ineq1}) and  (\ref{vmesna1}).
\begin{corollary} Let $\Psi$ and $\Sigma$ be bounded sets of positive kernel operators on $L^2(X, \mu)$  and let  $r \in\{\rho _{ess},\hat{\rho} _{ess}\}$. Then
\begin{eqnarray}
\nonumber
\gamma (\Psi ^{(\frac{1}{2})} \circ  \Sigma^{(\frac{1}{2})} ) &\le& r  \left ( (\Psi^* \Sigma) ^{(\frac{1}{2})}\circ (\Sigma ^* \Psi)^{(\frac{1}{2})}\right)^{\frac{1}{2}}\le r (\Psi^* \Sigma )^{\frac{1}{2}} =
r  \left (\Psi \Sigma ^* \right)^{\frac{1}{2}}  . 
\end{eqnarray}
\end{corollary}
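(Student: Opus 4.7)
The claim is exactly the $m=2$ specialization of the chain (\ref{vmesna1}) appearing inside the proof of Theorem~\ref{first_ess}, truncated one step earlier in the estimation so that the Hadamard geometric mean $(\Psi^*\Sigma)^{(1/2)}\circ(\Sigma^*\Psi)^{(1/2)}$ is kept as an intermediate upper bound. Accordingly the plan is to replay that specialization and then match $r(\Sigma^*\Psi)$, $r(\Psi^*\Sigma)$ and $r(\Psi\Sigma^*)$ using the symmetries of $r$ available on $L^2(X,\mu)$.

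To obtain the first inequality, set $\Xi:=\Psi^{(1/2)}\circ\Sigma^{(1/2)}$; by (\ref{tool_ess}) one has $\gamma(\Xi)^2=r(\Xi^*\Xi)$ for both $r\in\{\rho_{ess},\hat{\rho}_{ess}\}$. Using commutativity of $\circ$ and the identity $(A^{(1/2)}\circ B^{(1/2)})^*=(A^*)^{(1/2)}\circ(B^*)^{(1/2)}$,
$$\Xi^*\Xi=\bigl((\Psi^*)^{(1/2)}\circ(\Sigma^*)^{(1/2)}\bigr)\bigl(\Sigma^{(1/2)}\circ\Psi^{(1/2)}\bigr).$$
The first inequality of (\ref{lepa_ess}), applied in its positive-kernel-operator form (that is, \cite[Theorem~3.2(i)]{BP22b}, as already invoked in the proof of Theorem~\ref{first_ess}) to the $2\times 2$ grid with rows $(\Psi^*,\Sigma^*)$ and $(\Sigma,\Psi)$ and weights $\alpha_1=\alpha_2=\tfrac12$, dominates this product by $(\Psi^*\Sigma)^{(1/2)}\circ(\Sigma^*\Psi)^{(1/2)}$, so by monotonicity of $r$ and a square root one deduces $\gamma(\Xi)\le r\bigl((\Psi^*\Sigma)^{(1/2)}\circ(\Sigma^*\Psi)^{(1/2)}\bigr)^{1/2}$.

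For the remaining estimates, the second inequality of (\ref{gsh_ref_ess}) (again in its positive-kernel version) with $\alpha_1=\alpha_2=\tfrac12$ gives
$$r\bigl((\Psi^*\Sigma)^{(1/2)}\circ(\Sigma^*\Psi)^{(1/2)}\bigr)\le r(\Psi^*\Sigma)^{1/2}r(\Sigma^*\Psi)^{1/2}.$$
The claimed identifications $r(\Sigma^*\Psi)=r(\Psi^*\Sigma)=r(\Psi\Sigma^*)$ follow from the cyclic property (\ref{again}) together with the fact that on $L^2$ one has $\gamma(T)=\gamma(T^*)$ and $\rho_{ess}(T)=\rho_{ess}(T^*)$ by Lemma~\ref{Hilb1}; arguing elementwise in the definitions (\ref{genrhoess})--(\ref{jointess}) and using $(\Lambda^m)^*=(\Lambda^*)^m$, this forces $r(\Lambda)=r(\Lambda^*)$ for every bounded $\Lambda$. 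Hence $r(\Psi^*\Sigma)=r((\Psi^*\Sigma)^*)=r(\Sigma^*\Psi)$ and $r(\Psi^*\Sigma)=r(\Sigma\Psi^*)=r((\Sigma\Psi^*)^*)=r(\Psi\Sigma^*)$, which finishes the chain.

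No serious obstacle is expected: the argument is a straightforward packaging of the $m=2$ case of (\ref{vmesna1}) together with the adjoint-and-cyclic identities for $r$ on $L^2$. The only point that requires a little care is to verify that the Hadamard-product inequalities quoted from Theorems \ref{finally_ess} and Theorem~\ref{powers} are used in their positive-kernel-operator form, which is legitimate since $L^2$ and its dual both have order continuous norms.
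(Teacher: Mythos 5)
Your proposal is correct and follows essentially the same route as the paper, which derives this corollary as the $m=2$ case of Theorem \ref{first_ess} (i.e., from (\ref{ess_ineq1}) and the intermediate bound $r(\Sigma)$ in (\ref{vmesna1})): you use (\ref{tool_ess}) to pass to $r(\Xi^*\Xi)$, dominate that set product by the Hadamard geometric mean $(\Psi^*\Sigma)^{(1/2)}\circ(\Sigma^*\Psi)^{(1/2)}$ via \cite[Theorem 3.2(i)]{BP22b}, and finish with (\ref{gsh_ref}) together with the adjoint/cyclic invariance of $r$ on $L^2$. The only difference is that you spell out the $m=2$ computation that the paper leaves implicit.
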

To our  knowledge even the following singelton set case (which as an essential version of (\ref{Pep19})-\cite[Theorem 4.4, (4.8)]{P17+}) is new. 
\begin{corollary} Let $A$ and $B$ be positive kernel operators on $L^2(X, \mu)$. Then
\begin{eqnarray}
\nonumber
\gamma (A^{(\frac{1}{2})} \circ  B^{(\frac{1}{2})} ) &\le&\rho _{ess}  \left ( (A^* B) ^{(\frac{1}{2})}\circ (B^* A)^{(\frac{1}{2})}\right)^{\frac{1}{2}} \le \rho _{ess}  (A^* B ) ^{\frac{1}{2}}=
\rho _{ess}  \left (A B^* \right)^{\frac{1}{2}} . 
\end{eqnarray}
\label{ess_star}
\end{corollary}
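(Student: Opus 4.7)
The plan is to deduce Corollary \ref{ess_star} directly from the preceding corollary (the bounded-set version just proved on $L^2(X,\mu)$) by specialising to the singleton sets $\Psi = \{A\}$ and $\Sigma = \{B\}$ and choosing $r = \rho_{ess}$. The first thing I would verify is that for a singleton, both notions of generalized/joint essential spectral radius collapse to $\rho_{ess}$: since $\{T\}^m = \{T^m\}$, the definitions (\ref{genrhoess}) and (\ref{jointess}) together with (\ref{esslim=inf}) give $\rho_{ess}(\{T\}) = \sup_m \rho_{ess}(T^m)^{1/m} = \rho_{ess}(T)$ and $\hat{\rho}_{ess}(\{T\}) = \lim_m \gamma(T^m)^{1/m} = \rho_{ess}(T)$. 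Substituting the singleton sets into the preceding corollary then yields the chain
\[
\gamma(A^{(1/2)} \circ B^{(1/2)}) \le \rho_{ess}\bigl((A^*B)^{(1/2)} \circ (B^*A)^{(1/2)}\bigr)^{1/2} \le \rho_{ess}(A^*B)^{1/2},
\]
which gives the first two inequalities of the statement.

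For the final equality $\rho_{ess}(A^*B) = \rho_{ess}(AB^*)$, I would combine the cyclic-product invariance $r(\Psi\Sigma) = r(\Sigma\Psi)$ from (\ref{again}), applied to singletons to obtain $\rho_{ess}(A^*B) = \rho_{ess}(BA^*)$, with the adjoint invariance $\rho_{ess}(T^*) = \rho_{ess}(T)$ recorded in the preliminaries (valid for any bounded operator on a complex Banach space, in particular on $L^2(X,\mu)$, by the Nussbaum-type identities cited there). Thus $\rho_{ess}(BA^*) = \rho_{ess}((BA^*)^*) = \rho_{ess}(AB^*)$, completing the equality.

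No genuine obstacle arises: every ingredient — the singleton reduction of $\rho_{ess}(\{\cdot\})$ and $\hat{\rho}_{ess}(\{\cdot\})$, the cyclic invariance, and the adjoint invariance — has already been established, so the corollary is a routine specialisation of the preceding set-valued corollary.
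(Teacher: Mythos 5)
Your proposal is correct and follows exactly the route the paper intends: Corollary \ref{ess_star} is obtained by specialising the preceding set-valued corollary to the singletons $\Psi=\{A\}$, $\Sigma=\{B\}$ with $r=\rho_{ess}$, using that $\rho_{ess}(\{T\})=\rho_{ess}(T)$ (and $\gamma(\{T\})=\gamma(T)$), and deriving the final equality from $r(\Psi\Sigma)=r(\Sigma\Psi)$ together with $\rho_{ess}(T^*)=\rho_{ess}(T)$, all of which are recorded in the paper's preliminaries and Section 4. No gap; this matches the paper's (implicit) argument.
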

The following result is an essential version of \cite[Theorem 3.3]{B23+} and is proved in a similar way as Theorem \ref{first_ess}. It follows from  (\ref{tool_ess}), (\ref{again}) and Theorem \ref{finally_ess}. To avoid too much repetition of ideas, the details of the proof are omitted.
\begin{theorem}
\label{thkate_ess}
Let $\Psi_1, \ldots , \Psi_m$ be bounded sets of nonnegative matrices that define operators on $l^2$ and let $\alpha \ge \frac{1}{m}$ and $r \in \{\rho _{ess},\hat{\rho}_{ess}\}$.

If $m$ is even, then
\begin{align}
\nonumber
\gamma (\Psi_1^{(\alpha)}\circ\Psi_2^{(\alpha)}\circ\cdots\circ\Psi_m^{(\alpha)})\le r^{\frac{1}{m}}(\Sigma_{\alpha})\le(r(\Psi_1^*\Psi_2\Psi_3^*\Psi_4\cdots\Psi_{m-1}^*\Psi_m)\\
r(\Psi_m\Psi_{m-1}^*\cdots\Psi_4\Psi_3^*\Psi_2\Psi_1^*))^{\frac{\alpha}{2}},
\label{alpha1_ess}
\end{align}
where
\begin{align}
\nonumber
\Sigma_{\alpha}=(\Psi_1^*\Psi_2\Psi_3^*\Psi_4\cdots\Psi_{m-1}^*\Psi_m)^{(\alpha)}\circ(\Psi_2^*\Psi_3\Psi_4^*\Psi_5\cdots\Psi_m^*\Psi_1)^{(\alpha)}\circ\cdots\circ\\
\nonumber
(\Psi_{m-1}^*\Psi_m\Psi_1^*\Psi_2\cdots\Psi_{m-3}^*\Psi_{m-2})^{(\alpha)}\circ(\Psi_m^*\Psi_1\Psi_2^*\Psi_3\cdots\Psi_{m-2}^*\Psi_{m-1})^{(\alpha)}
\end{align}

If $m$ is odd then
\begin{align}
\nonumber
\gamma (\Psi_1^{(\alpha)}\circ\Psi_2^{(\alpha)}\circ\cdots\circ\Psi_m^{(\alpha)}) \le r^{\frac{1}{2m}}(\Omega_{\alpha})\le\;\;\;\;\;\;\;\;\;\;\;\;\;\;\;\;\;\;\;\\
r^{\frac{\alpha}{2}}(\Psi_1\Psi_2^*\Psi_3\Psi_4^*\cdots\Psi_{m-2}\Psi_{m-1}^*\Psi_m\Psi_1^*\Psi_2\cdots\Psi_{m-1}\Psi_m^*)
\label{alpha2_ess}
\end{align}
where
\begin{align}
\nonumber
\Omega_{\alpha}=(\Psi_1^*\Psi_2\Psi_3^*\Psi_4\cdots\Psi_{m-1}\Psi_m^*\Psi_1\Psi_2^*\Psi_3\cdots\Psi_{m-1}^*\Psi_m)^{(\alpha)}\circ\\
\nonumber
(\Psi_2^*\Psi_3\Psi_4^*\Psi_5\cdots\Psi_{m-1}^*\Psi_m\Psi_1^*\Psi_2\Psi_3^*\cdots\Psi_m^*\Psi_1)^{(\alpha)}\circ\cdots\circ\\
\nonumber
(\Psi_m^*\Psi_1\Psi_2^*\Psi_3\Psi_4^*\cdots\Psi_{m-1}^*\Psi_m\Psi_1^*\Psi_2\cdots\Psi_{m-2}^*\Psi_{m-1})^{(\alpha)}.
\end{align}
\end{theorem}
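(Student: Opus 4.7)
The plan is to mirror the proof of Theorem \ref{first_ess} line by line, with $\frac{1}{m}$ replaced throughout by $\alpha$, and with the appeal to \cite[Theorem 3.2(i)]{BP22b} (which requires weights summing to $1$) replaced by Theorem \ref{finally_ess} (which allows weights summing to any number $\ge 1$). The hypothesis $\alpha \ge \frac{1}{m}$ is used precisely to guarantee $\sum_{j=1}^{m} \alpha = m\alpha \ge 1$, which is the hypothesis needed in both inequalities of Theorem \ref{finally_ess}.

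For even $m$, set $\Sigma = \Psi_1^{(\alpha)} \circ \cdots \circ \Psi_m^{(\alpha)}$. By (\ref{tool_ess}), $\gamma(\Sigma)^{2} = r(\Sigma^{*}\Sigma)$, and by (\ref{again}) this gives $\gamma(\Sigma)^{m} = r((\Sigma^{*}\Sigma)^{m/2})$. I would then expand $(\Sigma^{*}\Sigma)^{m/2}$ as an ordinary product of $m$ Hadamard weighted geometric means, exactly as in the even-$m$ expansion in the proof of Theorem \ref{first_ess} (the freedom to cyclically reorder factors inside each Hadamard product comes from the commutativity of $\circ$). Applying the first inequality in (\ref{lepa_ess}) of Theorem \ref{finally_ess}, with $k = m$ rows of Hadamard factors and all weights equal to $\alpha$, pulls the ordinary product inside the Hadamard powers and yields $r((\Sigma^{*}\Sigma)^{m/2}) \le r(\Sigma_{\alpha})$. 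Taking $m$-th roots gives the first inequality in (\ref{alpha1_ess}). For the second inequality, the second inequality in (\ref{gsh_ref_ess}) applied to $\Sigma_\alpha$ produces a product of $m$ factors of the form $r(\text{cyclic shift})^{\alpha}$; by cyclic invariance $r(AB) = r(BA)$ and by $r(T^{*}) = r(T)$, each such factor equals either $r(\Psi_1^{*}\Psi_2\Psi_3^{*}\cdots\Psi_{m-1}^{*}\Psi_m)$ or $r(\Psi_m\Psi_{m-1}^{*}\cdots\Psi_2\Psi_1^{*})$, and these two quantities are adjoints of each other and hence equal, so the product collapses to the geometric mean on the right-hand side of (\ref{alpha1_ess}).

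For odd $m$, the half-power $(\Sigma^{*}\Sigma)^{m/2}$ is not available, so I would instead use $\gamma(\Sigma)^{2m} = r(\Sigma^{*}\Sigma)^{m} = r((\Sigma^{*}\Sigma)^{m})$. Expanding $(\Sigma^{*}\Sigma)^{m}$ as an ordinary product of $2m$ cyclically reordered Hadamard weighted geometric means, exactly as in the odd-$m$ block of the proof of Theorem \ref{first_ess}, and then applying the first inequality in (\ref{lepa_ess}) with $k = 2m$, produces $r((\Sigma^{*}\Sigma)^{m}) \le r(\Omega_{\alpha})$, which gives the first inequality in (\ref{alpha2_ess}) after taking the $(2m)$-th root. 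The second inequality in (\ref{alpha2_ess}) then follows from the second inequality in (\ref{gsh_ref_ess}) together with cyclic invariance of $r$.

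The only real obstacle is combinatorial bookkeeping: one must verify that the cyclic rewritings of the Hadamard factors inside $(\Sigma^{*}\Sigma)^{m/2}$ (resp.\ $(\Sigma^{*}\Sigma)^{m}$) are aligned so that the columnwise ordinary products produced by Theorem \ref{finally_ess} are precisely the operators appearing as Hadamard factors in $\Sigma_{\alpha}$ (resp.\ $\Omega_{\alpha}$). Since this bookkeeping has already been carried out in the proof of Theorem \ref{first_ess} for the case $\alpha = \frac{1}{m}$ and depends only on the cyclic structure, not on the value of $\alpha$, it transfers without change. For this reason, and to avoid verbatim repetition, I would present the argument succinctly and refer the reader to the corresponding expansion in the proof of Theorem \ref{first_ess}.
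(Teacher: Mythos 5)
Your proposal follows the same route as the paper, which indeed omits the proof of Theorem \ref{thkate_ess} and states only that it is obtained from (\ref{tool_ess}), (\ref{again}) and Theorem \ref{finally_ess} by repeating the argument of Theorem \ref{first_ess} with $\frac{1}{m}$ replaced by $\alpha$. Your identification of $\alpha\ge\frac{1}{m}$ as exactly the condition $m\alpha\ge 1$ needed to invoke Theorem \ref{finally_ess} is the right observation, and the reductions $\gamma(\Sigma)^m=r((\Sigma^*\Sigma)^{m/2})$ for $m$ even and $\gamma(\Sigma)^{2m}=r((\Sigma^*\Sigma)^m)$ for $m$ odd are correct.

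One justification in the even case is wrong, although the conclusion it is meant to support is right. You claim that $P:=\Psi_1^*\Psi_2\Psi_3^*\Psi_4\cdots\Psi_{m-1}^*\Psi_m$ and $Q:=\Psi_m\Psi_{m-1}^*\cdots\Psi_2\Psi_1^*$ are ``adjoints of each other and hence equal'' in $r$-value. They are not adjoints: $P^*=\Psi_m^*\Psi_{m-1}\cdots\Psi_2^*\Psi_1$ carries the opposite star pattern from $Q$, and already for $m=4$ one has $r(Q)=r(\Psi_1^*\Psi_4\Psi_3^*\Psi_2)$, which differs in general from $r(P)=r(\Psi_1^*\Psi_2\Psi_3^*\Psi_4)$ since $\Psi_2$ and $\Psi_4$ trade places. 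This is precisely why the right-hand side of (\ref{alpha1_ess}) is a genuine geometric mean of two distinct quantities rather than a single $r(P)^{\alpha}$. The correct bookkeeping, implicit in the displayed chain in the proof of Theorem \ref{first_ess}, is a counting argument: of the $m$ cyclic shifts $C_j=\Psi_j^*\Psi_{j+1}\cdots$ appearing as Hadamard factors of $\Sigma_\alpha$, the $m/2$ with $j$ odd are even cyclic rotations of $C_1=P$, so $r(C_j)=r(P)$, while the $m/2$ with $j$ even satisfy $r(C_j)=r(C_2)=r(\Psi_1\Psi_2^*\Psi_3\cdots\Psi_{m-1}\Psi_m^*)=r(Q^*)=r(Q)$. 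Hence $\prod_{j=1}^m r(C_j)^{\alpha}=\left(r(P)\,r(Q)\right)^{m\alpha/2}$, whose $m$-th root is the stated bound. With this one repair the proposal is a faithful reconstruction of the intended proof; the odd case, where all $m$ shifts are even rotations of a single product of length $2m$, is handled correctly by your appeal to cyclic invariance alone.
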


\medskip

The following corollary follows from (\ref{alpha1_ess}).
\begin{corollary} Let $\Psi$ and $\Sigma$ be bounded sets of nonnegative matrices that defined  operators on $l^2$, let  $r \in\{\rho _{ess},\hat{\rho} _{ess}\}$ and $\alpha \ge \frac{1}{2}$. Then
\begin{eqnarray}
\nonumber
\gamma (\Psi ^{(\alpha)} \circ  \Sigma^{(\alpha)} ) &\le& r  \left ( (\Psi^* \Sigma) ^{(\alpha)}\circ (\Sigma ^* \Psi)^{(\alpha)}\right)^{\frac{1}{2}}\le r (\Psi^* \Sigma )^{\alpha} =
r  \left (\Psi \Sigma ^* \right)^{\alpha}  . 
\end{eqnarray}
\label{dobra_ess10}
\end{corollary}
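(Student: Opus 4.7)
The plan is to obtain Corollary \ref{dobra_ess10} as a direct specialization of Theorem \ref{thkate_ess}, inequality (\ref{alpha1_ess}), to the case of $m=2$ sets, with $\Psi_1 = \Psi$ and $\Psi_2 = \Sigma$. Since $m=2$ is even and the hypothesis $\alpha \geq \frac{1}{2} = \frac{1}{m}$ is exactly the one imposed on the corollary, the theorem applies without further work. In this instance, the auxiliary set $\Sigma_\alpha$ in the statement of Theorem \ref{thkate_ess} collapses to precisely
$$\Sigma_\alpha = (\Psi^*\Sigma)^{(\alpha)} \circ (\Sigma^*\Psi)^{(\alpha)},$$
and raising the bound from (\ref{alpha1_ess}) to the power $1/m = 1/2$ yields the first inequality of the corollary.

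For the second inequality in the chain, the upper bound furnished by (\ref{alpha1_ess}) in the $m=2$ case is
$$\bigl(r(\Psi^*\Sigma)\,r(\Sigma\Psi^*)\bigr)^{\alpha/2}.$$
By the cyclic invariance (\ref{again}) we have $r(\Sigma\Psi^*) = r(\Psi^*\Sigma)$, so this product simplifies to $r(\Psi^*\Sigma)^{\alpha}$, giving the second inequality.

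For the terminal equality $r(\Psi^*\Sigma) = r(\Psi\Sigma^*)$, I will combine cyclic invariance with adjoint invariance. Observe that $(\Sigma\Psi^*)^* = \Psi\Sigma^*$ set-wise. The invariance $r(\Lambda^*) = r(\Lambda)$ for a bounded set $\Lambda$ of bounded operators on $l^2$ is available for both choices of $r$: for $\hat{\rho}_{ess}$ it follows from $\gamma(T^*) = \gamma(T)$ (proved in Lemma \ref{Hilb1}) via the definition (\ref{jointess}), while for $\rho_{ess}$ it follows from the classical $\rho_{ess}(T^*) = \rho_{ess}(T)$ noted in Section 2 via the definition (\ref{genrhoess}). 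Chaining these identities, $r(\Psi^*\Sigma) = r(\Sigma\Psi^*) = r((\Sigma\Psi^*)^*) = r(\Psi\Sigma^*)$, which completes the chain.

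No serious obstacle arises here; the proof is essentially a bookkeeping exercise. The only point that demands a moment's care is the correct unpacking of the cyclic words appearing in the definition of $\Sigma_\alpha$ and in the upper bound of (\ref{alpha1_ess}) when $m=2$, together with the clean identification of their adjoints so that the final equality $r(\Psi^*\Sigma) = r(\Psi\Sigma^*)$ follows transparently.
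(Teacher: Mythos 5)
Your proposal is correct and follows essentially the same route as the paper, which states that the corollary "follows from (\ref{alpha1_ess})": you specialize Theorem \ref{thkate_ess} to $m=2$, identify $\Sigma_\alpha=(\Psi^*\Sigma)^{(\alpha)}\circ(\Sigma^*\Psi)^{(\alpha)}$, and finish with the cyclic invariance (\ref{again}) and the adjoint invariance $r(\Lambda^*)=r(\Lambda)$ noted in Section 4. The bookkeeping of the cyclic words and adjoints is done correctly, so nothing further is needed.
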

The following result is an essential version of \cite[Corollary 3.17]{B23+}. It follows from Corollary \ref{dobra_ess10} and Proposition \ref{dog}.
\begin{corollary}
Let $\alpha\ge\frac{1}{2}$ and let $\Psi$ and $\Sigma$ be bounded sets of nonnegative matrices that define operators on $l^2$. If $r\in\{\rho_{ess}, \hat{\rho}_{ess}\}$, then
$$\gamma(\Psi^{(\alpha)}\circ\Sigma^{(\alpha)})\le r((\Psi^*\Sigma)^{(\alpha)}\circ(\Sigma^*\Psi)^{(\alpha)})^{\frac{1}{2}}$$
\be
\nonumber
\le r((\Psi^*\Sigma)^{(\alpha)}\circ(\Psi^*\Sigma)^{(\alpha)})^{\frac{1}{2}}\le r(\Psi^*\Sigma)^{\alpha}.
\label{matrix}
\ee
\end{corollary}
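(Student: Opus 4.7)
The proof is essentially an assembly of two already-proved ingredients, with the only nontrivial observation being that one should apply Proposition~\ref{dog} not to the original sets $\Psi, \Sigma$ but to the auxiliary set $\Theta := \Psi^*\Sigma$.

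First I would dispose of the leftmost inequality: under the hypotheses $\alpha\ge 1/2$, $\Psi,\Sigma$ bounded sets of nonnegative matrices on $l^2$, and $r\in\{\rho_{ess},\hat\rho_{ess}\}$, Corollary~\ref{dobra_ess10} gives directly
$$
\gamma(\Psi^{(\alpha)}\circ\Sigma^{(\alpha)})\le r\bigl((\Psi^*\Sigma)^{(\alpha)}\circ(\Sigma^*\Psi)^{(\alpha)}\bigr)^{1/2},
$$
which is exactly the first inequality of the statement. Nothing more needs to be verified here.

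Next I would introduce $\Theta:=\Psi^*\Sigma=\{A^*B:A\in\Psi,\,B\in\Sigma\}$. Since $\Psi$ and $\Sigma$ consist of nonnegative matrices defining bounded operators on $l^2$, so do the adjoints (which are just transposes of nonnegative matrices), and products of such matrices are again nonnegative matrices defining bounded operators on $l^2$; boundedness of $\Theta$ follows from submultiplicativity of the operator norm. Moreover $\Theta^*=\Sigma^*\Psi$. Proposition~\ref{dog} applied to $\Theta$ therefore yields
$$
r\bigl(\Theta^{(\alpha)}\circ(\Theta^*)^{(\alpha)}\bigr)\le r\bigl(\Theta^{(\alpha)}\circ\Theta^{(\alpha)}\bigr)\le r(\Theta)^{2\alpha},
$$
that is,
$$
r\bigl((\Psi^*\Sigma)^{(\alpha)}\circ(\Sigma^*\Psi)^{(\alpha)}\bigr)\le r\bigl((\Psi^*\Sigma)^{(\alpha)}\circ(\Psi^*\Sigma)^{(\alpha)}\bigr)\le r(\Psi^*\Sigma)^{2\alpha}.
$$
Taking square roots of both inequalities produces the middle and right inequalities of the statement.

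There is no real obstacle, the only thing worth flagging in the write-up is the legitimacy of the substitution $\Theta=\Psi^*\Sigma$ into Proposition~\ref{dog}; once that is in place the two inequalities of Proposition~\ref{dog} line up one-to-one with the two remaining inequalities, and chaining them with the first step completes the proof.
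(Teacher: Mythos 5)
Your proposal is correct and matches the paper's own (very brief) justification, which states that the corollary ``follows from Corollary \ref{dobra_ess10} and Proposition \ref{dog}'': the first inequality is Corollary \ref{dobra_ess10} verbatim, and the remaining two are Proposition \ref{dog} applied to the set $\Theta=\Psi^*\Sigma$ with $\Theta^*=\Sigma^*\Psi$, followed by taking square roots. Your explicit check that $\Theta$ is again a bounded set of nonnegative matrices defining operators on $l^2$ is the only detail the paper leaves implicit.
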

\medskip

Again, to our  knowledge even the following singelton set case (which as an essential version of \cite[Theorem 4.4, (4.9)]{P17+}) is new. 
\begin{corollary} Let $A$ and $B$ be nonnegative matrices that defined  operators on $l^2$  and $\alpha \ge \frac{1}{2}$. Then
\begin{eqnarray}
\nonumber
\gamma (A^{(\alpha)} \circ  B^{(\alpha)} ) &\le&\rho _{ess}  \left ( (A^* B) ^{(\alpha)}\circ (B^* A)^{(\alpha)}\right)^{\frac{1}{2}} \\
\nonumber
&\le&\rho _{ess}  \left ( (A^* B) ^{(\alpha)}\circ (A^* B)^{(\alpha)}\right)^{\frac{1}{2}}
\le \rho _{ess}  (A^* B ) ^{\alpha}=
\rho _{ess}  \left (A B^* \right)^{\alpha}  .
\end{eqnarray}
\end{corollary}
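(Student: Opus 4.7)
The plan is to deduce this corollary directly from the preceding one (the matrix version with bounded sets $\Psi,\Sigma$ established via Corollary \ref{dobra_ess10} and Proposition \ref{dog}) by specializing to the singleton sets $\Psi=\{A\}$ and $\Sigma=\{B\}$. For any singleton set $\{T\}$ of a bounded operator we have $\{T\}^m=\{T^m\}$, so directly from the definitions (\ref{genrhoess}) and (\ref{jointess})
\[
\rho_{ess}(\{T\})=\rho_{ess}(T),\qquad \hat\rho_{ess}(\{T\})=\lim_{m\to\infty}\gamma(T^m)^{1/m}=\rho_{ess}(T),
\]
where the last equality is (\ref{esslim=inf}). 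Consequently both choices $r\in\{\rho_{ess},\hat\rho_{ess}\}$ give the same value on singletons, and the three displayed inequalities of the corollary follow at once from the matrix corollary applied with $r=\rho_{ess}$.

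It remains only to justify the final equality $\rho_{ess}(A^{*}B)=\rho_{ess}(AB^{*})$. The plan is to combine two ingredients both recalled in the Preliminaries: on $L^{2}(X,\mu)$ one has $\rho_{ess}(T)=\rho_{ess}(T^{*})$ for any bounded operator $T$, and by (\ref{again}) the essential spectral radius is invariant under cyclic permutations of products. Applying these in turn,
\[
\rho_{ess}(A^{*}B)=\rho_{ess}\bigl((A^{*}B)^{*}\bigr)=\rho_{ess}(B^{*}A)=\rho_{ess}(AB^{*}).
\]

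There is essentially no obstacle here: the corollary is a straightforward specialization and the main work has already been carried out in the general bounded-set version (via the Hilbert-space tool (\ref{tool_ess}) together with Theorem \ref{finally_ess} and Proposition \ref{dog}). The only minor point requiring care is to record that on singletons $\rho_{ess}$ and $\hat\rho_{ess}$ coincide, so that the statement does not lose information by being phrased only in terms of $\rho_{ess}$ on the right-hand side.
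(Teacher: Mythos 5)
Your proposal is correct and matches the paper's (implicit) argument: the corollary is exactly the singleton specialization of the preceding bounded-set corollary, using that $\rho_{ess}$ and $\hat{\rho}_{ess}$ coincide with the ordinary essential spectral radius on singleton sets via (\ref{esslim=inf}). Your justification of the final equality $\rho_{ess}(A^*B)=\rho_{ess}(B^*A)=\rho_{ess}(AB^*)$ via adjoint-invariance and (\ref{again}) is also the intended one.
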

We conclude the article by stating additional results that are essential versions of \cite[Theorems 3.5 and 3.6, Corollary 3.7, Theorems 3.8, 3.11 and 3.13, Corollary 3.15]{B23+}, respectively. The results follow from (\ref{tool_ess}), 
 \cite[Theorem 3.2(i)]{BP22b} and Theorem \ref{finally_ess} and are proved in a similar way than results in \cite{B23+}. To avoid repetition of ideas we omit the details of the proof.
\begin{theorem}
\label{kety_ess}
Let m be odd and let $\Psi_1, \ldots, \Psi_m$ be bounded sets of positive kernel operators on $L^2(X, \mu)$. For $r\in\{\rho_{ess}, \hat{\rho}_{ess}\}$ we have
\begin{align}
\nonumber
\gamma (\Psi_1^{(\frac{1}{m})}\circ\cdots\circ\Psi_m^{(\frac{1}{m})})\le\;\;\;\;\;\;\;\;\;\;\;\;\;\;\;\;\;\;\;\;\;\;\;\;\;\;\;\;\;\;\;\;\;\;\;\;\;\;\;\;\;\;\;\;\;\;\;\;\;\;\;\;\;\;\;\;\;\;\;\;\;\;\;\;\;\;\;\;\;\;\;\;\;\;\;\;\;\;\;\;\;\\
\nonumber
	r((\Psi_1\Psi_2^*)^{(\frac{1}{m})}\circ\cdots\circ
(\Psi_{m}\Psi_1^*)^{(\frac{1}{m})}\circ(\Psi_2\Psi_3^*)^{(\frac{1}{m})}\circ\cdots\circ(\Psi_{m-1}\Psi_{m}^*)^{(\frac{1}{m})})^{\frac{1}{2}}\le\;\;\;\;\;\;\;\;\;\\
r(\Omega_1^{(\frac{1}{m})}\circ\cdots\circ\Omega_m^{(\frac{1}{m})})^{\frac{1}{2m}}\le r(\Psi_1\Psi_2^*
\cdots\Psi_{m-2}\Psi_{m-1}^*\Psi_m\Psi_1^*
\cdots\Psi_{m-2}^*\Psi_{m-1}\Psi_m^*)^{\frac{1}{2m}},
\label{laufen_ess}
\end{align}
where 
\begin{align}
\nonumber
\Omega_j=\Psi_{2j-1}\Psi_{2j}^*
\cdots\Psi_{m-2}\Psi_{m-1}^*\Psi_m\Psi_1^*\Psi_2\Psi_3^*\cdots\Psi_{m-1}\Psi_{m}^*\Psi_1\Psi_2^*\cdots\Psi_{2j-3}\Psi_{2j-2}^*
\end{align}
for\; $1\le j\le \frac{m-1}{2}$, and
\begin{align}
\nonumber
\Omega_{\frac{m+1}{2}}=\Psi_m\Psi_1^*\Psi_2\Psi_3^*\cdots\Psi_{m-1}\Psi_{m}^*\Psi_1\Psi_2^*\Psi_3\Psi_4^*\cdots\Psi_{m-2}\Psi_{m-1}^*,\;\;\;\;\;\;\;\;\;\;\;\;\;\;\;\;\;\;\;\;\;\;\;\;
\end{align}
\begin{align}
\nonumber
\Omega_j=\Psi_{2j-m-1}\Psi_{2j-m}^*
\cdots\Psi_{m-1}\Psi_m^*\Psi_1\Psi_2^*\Psi_3\Psi_4^*\cdots\Psi_m\Psi_1^*\cdots\Psi_{2j-m-3}\Psi_{2j-m-2}^*
\end{align}
for\; $\frac{m+3}{2}\le j \le m$.
\end{theorem}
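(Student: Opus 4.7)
The plan is to adapt the odd-$m$ argument of Theorem \ref{first_ess}, but apply it to $TT^*$ rather than $T^*T$, which exposes the neighbour pairings $\Psi_j\Psi_{j+1}^*$ directly. Set $T=\Psi_1^{(1/m)}\circ\cdots\circ\Psi_m^{(1/m)}$; by (\ref{tool_ess}) we have $\gamma(T)^2=r(TT^*)$ for $r\in\{\rho_{ess},\hat{\rho}_{ess}\}$.

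For the first inequality in (\ref{laufen_ess}), commutativity of $\circ$ lets me reindex
$T^*=(\Psi_2^*)^{(1/m)}\circ(\Psi_3^*)^{(1/m)}\circ\cdots\circ(\Psi_1^*)^{(1/m)}$, and then the operator bound (\ref{norm2}) of Theorem \ref{DBPfs}(i) gives
$$TT^*\le(\Psi_1\Psi_2^*)^{(1/m)}\circ(\Psi_2\Psi_3^*)^{(1/m)}\circ\cdots\circ(\Psi_m\Psi_1^*)^{(1/m)}.$$
The right-hand side, again by commutativity of $\circ$, equals the intermediate Hadamard mean in (\ref{laufen_ess}): its first sub-sequence $(\Psi_1\Psi_2^*)^{(1/m)}\circ\cdots\circ(\Psi_m\Psi_1^*)^{(1/m)}$ collects the pairs with odd starting index $j=1,3,\ldots,m$, and its second $(\Psi_2\Psi_3^*)^{(1/m)}\circ\cdots\circ(\Psi_{m-1}\Psi_m^*)^{(1/m)}$ collects the even-starting ones $j=2,4,\ldots,m-1$, so together each $\Psi_j\Psi_{j+1}^*$ appears once with weights summing to $1$. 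Monotonicity of $r$ on positive kernel operators yields the first inequality.

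For the second inequality, let $S$ denote this Hadamard mean and use $r(S)=r(S^m)^{1/m}$ from (\ref{again}). Expanding $S^m$ as an $m$-fold product of $m$-term Hadamard means and applying the first inequality of (\ref{lepa_ess}) in Theorem \ref{finally_ess} — whose proof relies only on (\ref{norm2}) and monotonicity of $r$, and therefore transfers to positive kernel operators on $L^2$ in the $\sum\alpha_j=1$ regime we are in — produces
$$r(S^m)\le r\!\bigl(\Omega_1^{(1/m)}\circ\cdots\circ\Omega_m^{(1/m)}\bigr),$$
where $\Omega_j$ is the length-$2m$ word obtained by reading off the $j$-th Hadamard slot across the $m$ factors of $S^m$. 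A direct index check matches these with the three-case definition of $\Omega_j$ in the statement, the split at $j=(m+1)/2$ merely recording where the cyclic reading wraps around. The third inequality then follows from the second inequality of (\ref{gsh_ref_ess}) (valid for $L^2$ kernel operators by Theorem \ref{powers}(i) in the $\sum\alpha_j=1$ case):
$$r\!\bigl(\Omega_1^{(1/m)}\circ\cdots\circ\Omega_m^{(1/m)}\bigr)\le\prod_{j=1}^m r(\Omega_j)^{1/m},$$
and since all $\Omega_j$ are cyclic permutations of the single length-$2m$ word $\Omega_1=\Psi_1\Psi_2^*\cdots\Psi_{m-2}\Psi_{m-1}^*\Psi_m\Psi_1^*\cdots\Psi_{m-2}^*\Psi_{m-1}\Psi_m^*$, the identity $r(AB)=r(BA)$ from (\ref{again}) forces $r(\Omega_j)=r(\Omega_1)$ for every $j$, and the product collapses to $r(\Omega_1)$.

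The principal obstacle is the combinatorial bookkeeping in the second step: one must verify that expanding $S^m$ and collecting the $j$-th Hadamard slot reproduces exactly the three-branch formula for $\Omega_j$, with the correct alternation of stars and the correct starting index in each case. Once the indexing is aligned, the rest of the proof assembles from (\ref{tool_ess}), (\ref{again}), (\ref{norm2}), and the $\sum\alpha_j=1$ essential analogues of (\ref{lepa_ess}) and (\ref{gsh_ref_ess}) valid for positive kernel operators on $L^2(X,\mu)$.
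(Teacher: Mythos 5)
Your proposal is correct and follows exactly the route the paper points to for this (omitted) proof: $\gamma(T)^2=r(TT^*)$ from (\ref{tool_ess}), the shift-and-pair bound $TT^*\le (\Psi_1\Psi_2^*)^{(\frac{1}{m})}\circ\cdots\circ(\Psi_m\Psi_1^*)^{(\frac{1}{m})}$ via (\ref{norm2}), the power trick $r(S)=r(S^m)^{1/m}$ with the (\ref{lepa_ess})-type estimate, and finally (\ref{gsh_ref_ess}) together with $r(AB)=r(BA)$ to collapse the product of the cyclically permuted words $\Omega_j$. The one point you should make explicit is that in estimating $r(S^m)$ the successive copies of $S$ must be written with their Hadamard factors cyclically rotated (using commutativity of $\circ$) before applying the (\ref{lepa_ess}) bound, so that the $j$-th slot collects the cyclic word $\Omega_j$ rather than the $m$-th power of a single pair.
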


\medskip

\begin{theorem}
\label{kate_ess2}
Let $m\in\NN$ be odd and let $\Psi_1, \ldots , \Psi_m$ be bounded sets of nonnegative matrices that define operators on $l^2.$ If $\alpha\ge\frac{1}{m}$ and if  $\Omega_1, \ldots , \Omega_m$ are sets defined in Theorem \ref{kety_ess}, then for $r\in\{\rho_{ess}, \hat{\rho}_{ess}\}$
\begin{align}
\nonumber
\gamma (\Psi_1^{(\alpha)}\circ\Psi_2^{(\alpha)}
\circ\cdots\circ\Psi_m^{(\alpha)})\le\;\;\;\;\;\;\;\;\;\;\;\;\;\;\;\;\;\;\;\;\;\;\;\;\;\;\;\;\;\;\;\;\;\;\;\;\;\;\;\;\;\;\;\;\;\;\;\;\;\;\;\;\;\;\;\;\;\;\;\;\;\;\;\;\;\;\;\;\;\;\;\;\;\;\;\;\;\;\;\;\;\;\\
\nonumber
r((\Psi_1\Psi_2^*)^{(\alpha)}\circ\cdots\circ(\Psi_{m-2}\Psi_{m-1}^*)^{(\alpha)}\circ(\Psi_m\Psi_1^*)^{(\alpha)}
\circ\cdots\circ(\Psi_{m-1}\Psi_m^*)^{(\alpha)})^{\frac{1}{2}}\le\;\;\;\;\;\;\;\;\;\;\;\;\;\;\\
r(\Omega_1^{(\alpha)}\circ\cdots\circ\Omega_m^{(\alpha)})^{\frac{1}{2m}}\le r(\Psi_1\Psi_2^*\Psi_3\Psi_4^*\cdots\Psi_{m-2}\Psi_{m-1}^*\Psi_m\Psi_1^*\Psi_2\Psi_3^*\cdots\Psi_{m-1}\Psi_m^*)^{\frac{\alpha}{2}}.\;\;\;\;\;
\label{urlaub_ess}
\end{align}
\end{theorem}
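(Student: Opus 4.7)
The plan is to mirror the argument used for Theorem \ref{first_ess}, substituting the exponent $\frac{1}{m}$ by $\alpha$ and exploiting that the hypothesis $\alpha \ge \frac{1}{m}$ gives $m\alpha \ge 1$, which is exactly the condition that lets me invoke the $\sum_j \alpha_j \ge 1$ clause of Theorem \ref{finally_ess}.

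My opening move would be to set $\Sigma := \Psi_1^{(\alpha)} \circ \cdots \circ \Psi_m^{(\alpha)}$ and apply the Hilbert-space identity \eqref{tool_ess}, so that $\gamma(\Sigma)^2 = r(\Sigma \Sigma^*)$. Since $\Sigma^* = (\Psi_1^*)^{(\alpha)} \circ \cdots \circ (\Psi_m^*)^{(\alpha)}$, commutativity of the Hadamard product lets me rewrite this as $(\Psi_2^*)^{(\alpha)} \circ \cdots \circ (\Psi_m^*)^{(\alpha)} \circ (\Psi_1^*)^{(\alpha)}$. The first inequality in \eqref{lepa_ess} of Theorem \ref{finally_ess} then delivers
\[
r(\Sigma \Sigma^*) \le r\!\left( (\Psi_1\Psi_2^*)^{(\alpha)} \circ (\Psi_2 \Psi_3^*)^{(\alpha)} \circ \cdots \circ (\Psi_{m-1}\Psi_m^*)^{(\alpha)} \circ (\Psi_m \Psi_1^*)^{(\alpha)} \right),
\]
and a harmless reordering by Hadamard commutativity identifies this with the middle set in the first inequality of \eqref{urlaub_ess}. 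Taking square roots yields the first inequality of the theorem.

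The key step is the middle one. Denote the above Hadamard product by $\Theta$ and write $Z_k = (\Psi_k \Psi_{k+1}^*)^{(\alpha)}$ so that $\Theta = Z_1 \circ \cdots \circ Z_m$ (indices mod $m$). By \eqref{again} it suffices to bound $r(\Theta^m)$. I would reorder the $i$-th copy of $\Theta$ in $\Theta^m$ via Hadamard commutativity into $Z_{1 + 2(i-1)} \circ Z_{2 + 2(i-1)} \circ \cdots \circ Z_{m + 2(i-1)}$ (indices mod $m$), and then apply Theorem \ref{finally_ess} columnwise. The $j$-th column yields the ordinary product $Z_j Z_{j+2} \cdots Z_{j+2(m-1)}$; because $m$ is odd, $\gcd(2,m)=1$, so this product is a chain of $m$ distinct pairs $\Psi_\ell \Psi_{\ell+1}^*$, and as $j$ varies the starting positions sweep through $\{1, \ldots, m\}$. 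A direct inspection should show that these $m$ column products are precisely the operators $\Omega_1, \ldots, \Omega_m$ from Theorem \ref{kety_ess}, up to a permutation of the index $j$ that is absorbed by Hadamard commutativity. This yields
\[
r(\Theta^m) \le r\!\left( \Omega_1^{(\alpha)} \circ \cdots \circ \Omega_m^{(\alpha)} \right),
\]
and a $(2m)$-th root produces the middle inequality.

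For the final inequality, Theorem \ref{special_case_ess} will give $r(\Omega_1^{(\alpha)} \circ \cdots \circ \Omega_m^{(\alpha)}) \le \prod_{j=1}^m r(\Omega_j)^{\alpha}$, and by \eqref{again} every $r(\Omega_j)$ equals $r$ of the canonical long chain $\Psi_1 \Psi_2^* \Psi_3 \Psi_4^* \cdots \Psi_{m-2}\Psi_{m-1}^* \Psi_m \Psi_1^* \Psi_2 \Psi_3^* \cdots \Psi_{m-1} \Psi_m^*$; a $\frac{1}{2m}$-th root yields the stated bound. The main obstacle I anticipate is the combinatorial verification in the middle step: one must match the column products $Z_j Z_{j+2} \cdots Z_{j+2(m-1)}$ with the $\Omega_j$ of Theorem \ref{kety_ess}, checking that oddness of $m$ guarantees this matching is a bijection on $\{1, \ldots, m\}$ so that Hadamard commutativity absorbs the residual permutation. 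This is exactly the combinatorial device used in the proof of Theorem \ref{first_ess} and in \cite[Theorem 3.11]{B23+}.
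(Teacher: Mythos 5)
Your proposal is correct and follows essentially the route the paper indicates for this (omitted) proof: the identity \eqref{tool_ess} applied to $\Sigma\Sigma^*$, the first inequality of \eqref{lepa_ess} with $m\alpha\ge 1$, and the columnwise application of Theorem \ref{finally_ess} to $\Theta^m$ with the cyclic-shift bookkeeping (valid because $\gcd(2,m)=1$) that identifies the column products with the $\Omega_j$. The only slip is citing Theorem \ref{special_case_ess} (the singleton version) for the last step; for bounded sets you should instead invoke \eqref{gsh_ref_ess} of Theorem \ref{finally_ess} together with \eqref{again}, which give $r(\Omega_1^{(\alpha)}\circ\cdots\circ\Omega_m^{(\alpha)})\le r(\Omega_1)^{m\alpha}$ and hence the stated bound after taking the $\frac{1}{2m}$-th root.
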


\medskip

\begin{corollary}
\noindent (i) Let $\Psi$ and $\Sigma$ be bounded sets of positive kernel operators on $L^2(X, \mu)$ and $r\in\{\rho_{ess}, \hat{\rho}_{ess}\}$. Then
\begin{align}
\nonumber
\gamma (\Psi^{(\frac{1}{3})}\circ(\Sigma^*)^{(\frac{1}{3})}\circ\Psi^{(\frac{1}{3})})
\le r((\Psi^*\Sigma^*)^{(\frac{1}{3})}\circ(\Psi^*\Psi)^{(\frac{1}{3})}\circ(\Sigma\Psi)^{(\frac{1}{3})})^{\frac{1}{2}}\le\;\;\;\;\;\;\;\;\;\;\;\;\;\;\;\;\;\;\;\;\;\;\;\;\;\;\\
r((\Psi^*\Sigma^*\Psi^*\Psi\Sigma\Psi)^{(\frac{1}{3})}\circ(\Psi^*\Psi\Sigma\Psi\Psi^*\Sigma^*)^{(\frac{1}{3})}\circ(\Sigma\Psi\Psi^*\Sigma^*\Psi^*\Psi)^{(\frac{1}{3})})^{\frac{1}{6}}\le \gamma (\Psi\Sigma\Psi)^{\frac{1}{3}}.\;\;\;\;\;\;\;\;\;
\label{henne_ess}
\end{align}
\noindent (ii) If $\Psi$ and $\Sigma$ are bounded sets of nonnegative matrices that define operators on $l^2(R)$ and if $\alpha\ge\frac{1}{3}$ then
\begin{align}
\nonumber
\gamma(\Psi^{(\alpha)}\circ(\Sigma^*)^{(\alpha)}\circ\Psi^{(\alpha)})\le r((\Psi^*\Sigma^*)^{(\alpha)}\circ(\Psi^*\Psi)^{(\alpha)}\circ(\Sigma\Psi)^{(\alpha)})^{\frac{1}{2}}\le\;\;\;\;\;\;\;\;\;\;\;\;\;\;\;\;\;\;\;\;\;\;\;\;\;\;\;\;\;\;\;\;\;\;\;\;\;\;\;\;\;\;\;\;\;\;\;\;\;\;\;\;\;\;\;\;\;\;\;\\
r((\Psi^*\Sigma^*\Psi^*\Psi\Sigma\Psi)^{(\alpha)}\circ(\Psi^*\Psi\Sigma\Psi\Psi^*\Sigma^*)^{(\alpha)}\circ(\Sigma\Psi\Psi^*\Sigma^*\Psi^*\Psi)^{(\alpha)})^{\frac{1}{6}}\le\gamma (\Psi\Sigma\Psi)^{\alpha}.\;\;\;\;\;\;\;\;\;\;\;\;\;\;\;\;\;\;\;\;\;\;\;\;\;\;\;\;\;\;\;\;\;\;\;\;\;\;\;\;\;\;
\label{huhn_ess}
\end{align}
\end{corollary}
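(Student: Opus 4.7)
The plan is to reduce both parts to direct specializations of Theorems \ref{kety_ess} and \ref{kate_ess2} at $m=3$ under the substitution $\Psi_1=\Psi^{*}$, $\Psi_2=\Sigma$, $\Psi_3=\Psi^{*}$, and then to translate everything back to $\Psi,\Sigma$ using the adjoint invariance of $\gamma$ and of $r\in\{\rho_{ess},\hat{\rho}_{ess}\}$.

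First I would match the left-hand side. Because $(A\circ B\circ C)^{*}=A^{*}\circ B^{*}\circ C^{*}$ and $(A^{(\alpha)})^{*}=(A^{*})^{(\alpha)}$, the set $(\Psi^{*})^{(\alpha)}\circ\Sigma^{(\alpha)}\circ(\Psi^{*})^{(\alpha)}$ is exactly the set of adjoints of elements of $\Psi^{(\alpha)}\circ(\Sigma^{*})^{(\alpha)}\circ\Psi^{(\alpha)}$, so Lemma \ref{Hilb1} gives
$$\gamma\bigl(\Psi^{(\alpha)}\circ(\Sigma^{*})^{(\alpha)}\circ\Psi^{(\alpha)}\bigr)=\gamma\bigl((\Psi^{*})^{(\alpha)}\circ\Sigma^{(\alpha)}\circ(\Psi^{*})^{(\alpha)}\bigr).$$
This identifies the LHS of the Corollary with the LHS of the Theorem under the substitution; the same adjoint symmetry applies to the subsequent terms that involve $r$, using $r(\Xi^{*})=r(\Xi)$ for bounded sets on a Hilbert space.

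A direct index chase then evaluates the intermediate quantities. The pairwise piece becomes $(\Psi_1\Psi_2^{*})^{(\alpha)}\circ(\Psi_3\Psi_1^{*})^{(\alpha)}\circ(\Psi_2\Psi_3^{*})^{(\alpha)}=(\Psi^{*}\Sigma^{*})^{(\alpha)}\circ(\Psi^{*}\Psi)^{(\alpha)}\circ(\Sigma\Psi)^{(\alpha)}$, which is the second term appearing in both (\ref{henne_ess}) and (\ref{huhn_ess}). Running the three-case cyclic formula for $\Omega_j$ in Theorem \ref{kety_ess} with $j=1$, $j=(m+1)/2=2$ and $j=m=3$ produces $\Omega_1=\Psi^{*}\Sigma^{*}\Psi^{*}\Psi\Sigma\Psi$, $\Omega_2=\Psi^{*}\Psi\Sigma\Psi\Psi^{*}\Sigma^{*}$ and $\Omega_3=\Sigma\Psi\Psi^{*}\Sigma^{*}\Psi^{*}\Psi$, matching the third term of the Corollary.

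For the final inequality, the corresponding bound in Theorem \ref{kety_ess} specializes to $r(\Psi^{*}\Sigma^{*}\Psi^{*}\Psi\Sigma\Psi)^{1/6}=r\bigl((\Psi\Sigma\Psi)^{*}(\Psi\Sigma\Psi)\bigr)^{1/6}$, and applying (\ref{tool_ess}) with the set $\Psi\Sigma\Psi$ in place of $\Sigma$ converts this to $\gamma(\Psi\Sigma\Psi)^{2/6}=\gamma(\Psi\Sigma\Psi)^{1/3}$; the analogous step in Theorem \ref{kate_ess2} replaces the exponent $1/(2m)=1/6$ by $\alpha/2$, yielding $\gamma(\Psi\Sigma\Psi)^{\alpha}$. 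The one piece of bookkeeping that will require genuine care is verifying that the three cases of the $\Omega_j$-recipe of Theorem \ref{kety_ess} really do specialize for $m=3$ to the three six-letter cyclic words displayed in the Corollary; once that is checked, every inequality in the Corollary is obtained by direct substitution together with the adjoint invariance of $\gamma$ and $r$ on $L^2(X,\mu)$ and $l^2$.
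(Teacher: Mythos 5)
Your proposal is correct and is exactly the route the paper intends: the corollary is stated as the $m=3$ specialization of Theorems \ref{kety_ess} and \ref{kate_ess2} (the paper omits the details, saying only that the results follow from (\ref{tool_ess}), \cite[Theorem 3.2(i)]{BP22b} and Theorem \ref{finally_ess}), and your substitution $\Psi_1=\Psi^*$, $\Psi_2=\Sigma$, $\Psi_3=\Psi^*$ together with the adjoint invariance of $\gamma$ and $r$ reproduces every displayed term, including the identification $r(\Psi^*\Sigma^*\Psi^*\Psi\Sigma\Psi)^{1/6}=\gamma(\Psi\Sigma\Psi)^{1/3}$ via (\ref{tool_ess}). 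The index bookkeeping for $\Omega_1,\Omega_2,\Omega_3$ checks out as you describe.
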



Let $S_m$ denote the group of permutations of the set $\{1, \ldots , m \}$. 

\begin{theorem}
\label{zivalska_ess}
 Let m be even, $\tau, \nu\in S_m$, 
  and let $\Psi_1, \ldots , \Psi_m$ be bounded sets of positive kernel operators on $L^2(X, \mu).$ Denote $\Sigma_j=\Psi_{\tau(2j-1)}^*\Psi_{\tau(2j)}$ and $\Sigma_{\frac{m}{2}+j}=\Psi_{\tau(2j)}^*\Psi_{\tau(2j-1)}=\Sigma_{j}^*$ for $j=1, \ldots , \frac{m}{2}$. Let $\Omega_i=\Sigma_{\nu(i)}\cdots\Sigma_{\nu(m)}\Sigma_{\nu(1)}\cdots\Sigma_{\nu(i-1)}$ for $i=1, \ldots , m$ and $r\in\{\rho_{ess}, \hat{\rho}_{ess}\}$.
  
\noindent  (i) Then
\begin{align}
\nonumber
\gamma( \Psi_1^{(\frac{1}{m})}\circ\cdots\circ\Psi_m^{(\frac{1}{m})}) \le r(\Sigma_1^{(\frac{1}{m})}\circ\cdots\circ\Sigma_m^{(\frac{1}{m})})^{\frac{1}{2}}\;\;\;\;\;\;\;\;\;\;\;\;\;\\
\le r(\Omega_1^{(\frac{1}{m})}\circ\cdots\circ\Omega_m^{(\frac{1}{m})})\le r^{\frac{1}{2m}}(\Sigma_{\nu(1)}\cdots\Sigma_{\nu(m)})^{\frac{1}{2m}}.
\label{ziv1}
\end{align}
\noindent (ii) If $\Psi_1, \ldots , \Psi_m$ are bounded sets of nonnegative matrices that define operators on $l^2(R)$ and if $\alpha\ge\frac{1}{m}$, then
\begin{align}
\nonumber
\gamma (\Psi_1^{(\alpha)}\circ\cdots\circ\Psi_m^{(\alpha)}) \le r(\Sigma_1^{(\alpha)}\circ\cdots\circ\Sigma_m^{(\alpha)})^{\frac{1}{2}}\;\;\;\;\;\;\;\;\;\;\;\;\;\;\\
\le r(\Omega_1^{(\alpha)}\circ\cdots\circ\Omega_m^{(\alpha)})^{\frac{1}{2m}} \le r(\Sigma_{\nu(1)}\cdots\Sigma_{\nu(m)})^{\frac{\alpha}{2}}.
\label{ziv2}
\end{align}
\end{theorem}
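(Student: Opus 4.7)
The plan is to follow a three-step pattern parallel to the proofs of Theorems \ref{first_ess} and \ref{thkate_ess}, with the permutations $\tau$ and $\nu$ introducing only additional bookkeeping. Throughout, set $A = \Psi_1^{(\alpha)} \circ \cdots \circ \Psi_m^{(\alpha)}$ with $\alpha = 1/m$ for part (i) and $\alpha \ge 1/m$ for part (ii); by (\ref{tool_ess}) we then have $\gamma(A)^2 = r(A^*A)$ for $r \in \{\rho_{ess}, \hat{\rho}_{ess}\}$.

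For the first inequality I would use commutativity of the Hadamard product to rearrange the two factors of $A^*A$ so that pairing the $j$-th factor of $A^*$ with the $j$-th factor of $A$ produces exactly the ordinary products $\Sigma_1, \Sigma_1^*, \Sigma_2, \Sigma_2^*, \ldots, \Sigma_{m/2}, \Sigma_{m/2}^*$, i.e.\ the list $\Sigma_1, \ldots, \Sigma_m$ up to another Hadamard reordering. Concretely, I would write
$$A^* = (\Psi_{\tau(1)}^*)^{(\alpha)} \circ \cdots \circ (\Psi_{\tau(m)}^*)^{(\alpha)}, \qquad A = \Psi_{\tau(2)}^{(\alpha)} \circ \Psi_{\tau(1)}^{(\alpha)} \circ \Psi_{\tau(4)}^{(\alpha)} \circ \Psi_{\tau(3)}^{(\alpha)} \circ \cdots,$$
swapping each consecutive pair in the factorization of $A$. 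Since $m$ is even and $m\alpha \ge 1$, the first inequality in (\ref{lepa_ess}) of Theorem \ref{finally_ess} (for the matrix case in (ii)), or the corresponding kernel-operator estimate from \cite[Theorem 3.2(i)]{BP22b} (for the $L^2(X,\mu)$ case in (i)), applies column-by-column. Combined with monotonicity of $r$, this yields $\gamma(A)^2 \le r(\Sigma_1^{(\alpha)} \circ \cdots \circ \Sigma_m^{(\alpha)})$, which is the first inequality of the chain.

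For the second inequality I would apply the first inequality in (\ref{ineqx}) of Theorem \ref{kathyth2_ess} (in part (ii)), or its essential kernel-operator analog coming from (\ref{genHuBfsP}) in the essential-radius form collected in Theorem \ref{powers} (in part (i)), to the bounded sets $\Sigma_{\nu(1)}, \ldots, \Sigma_{\nu(m)}$ with exponent $\alpha \ge 1/m$. Hadamard commutativity identifies $\Sigma_{\nu(1)}^{(\alpha)} \circ \cdots \circ \Sigma_{\nu(m)}^{(\alpha)}$ with $\Sigma_1^{(\alpha)} \circ \cdots \circ \Sigma_m^{(\alpha)}$, so the estimate reduces to
$$r(\Sigma_1^{(\alpha)} \circ \cdots \circ \Sigma_m^{(\alpha)}) \le r(\Omega_1^{(\alpha)} \circ \cdots \circ \Omega_m^{(\alpha)})^{1/m},$$
and raising to the power $1/2$ gives the middle step. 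For the last inequality, the essential weighted-geometric-mean estimate (Theorem \ref{special_case_ess} for the matrix case, or its kernel-operator essential version) produces $r(\Omega_1^{(\alpha)} \circ \cdots \circ \Omega_m^{(\alpha)}) \le r(\Omega_1)^{\alpha} \cdots r(\Omega_m)^{\alpha}$. Because each $\Omega_i$ is a cyclic rotation of $\Sigma_{\nu(1)} \cdots \Sigma_{\nu(m)}$, property (\ref{again}) forces $r(\Omega_i) = r(\Sigma_{\nu(1)} \cdots \Sigma_{\nu(m)})$ for every $i$, and raising to the power $1/(2m)$ closes the chain.

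The only delicate ingredient is the first step, where the specific swapped-pair reordering of $A$ must be chosen to make the column-wise ordinary products realize both a given $\Sigma_j$ and its adjoint $\Sigma_j^* = \Sigma_{m/2+j}$; once that is set up, the rest of the argument is an assembly of already-established tools. One must of course verify the hypothesis $\sum_{j=1}^m \alpha = m\alpha \ge 1$ needed to apply the matrix-case inequalities in Theorems \ref{finally_ess} and \ref{kathyth2_ess} in part (ii), which is exactly the assumption $\alpha \ge 1/m$ (and trivially $= 1$ in part (i)).
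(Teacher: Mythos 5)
Your proposal is correct and follows essentially the route the paper itself indicates (the paper omits the details, saying the result follows from (\ref{tool_ess}), \cite[Theorem 3.2(i)]{BP22b} and Theorem \ref{finally_ess}, in the same way as Theorems \ref{first_ess} and \ref{thkate_ess}): reduce $\gamma(A)^2$ to $r(A^*A)$, reorder the Hadamard factors of $A^*$ and $A$ so the column-wise ordinary products realize $\Sigma_1,\dots,\Sigma_m$, then pass to the cyclic products $\Omega_i$ and use (\ref{again}). Your derivation in fact yields the exponent $\tfrac{1}{2m}$ on the $\Omega$-term in part (i) (consistent with (\ref{ziv2}) at $\alpha=\tfrac1m$), which is what the slightly garbled display (\ref{ziv1}) is evidently meant to say; the only cosmetic slip is citing Theorem \ref{special_case_ess} (single operators) where the set-valued estimate (\ref{gsh_ref_ess}) of Theorem \ref{finally_ess}, resp.\ (\ref{gsh_ref}) of Theorem \ref{powers}(i), is the one actually needed.
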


\medskip

\begin{theorem}
Let $m\in\NN$ be even, $\alpha\ge\frac{2}{m}$, $\tau\in S_m$ and let $\Psi_1, \ldots , \Psi_m$ be bounded sets of nonnegative matrices that define operators on $l^2$. Let $\Sigma_j$ for $j=1, \ldots , m$ be as in Theorem \ref{zivalska_ess} and denote $\Theta_i=\Sigma_i\cdots\Sigma_{\frac{m}{2}}\Sigma_1\cdots\Sigma_{i-1}$ for $i=1, \ldots , \frac{m}{2}$. If $r\in\{\rho_{ess}, \hat{\rho}_{ess}\}$, then
$$\gamma(\Psi_1^{(\alpha)}\circ\cdots\circ\Psi_m^{(\alpha)}) \le r(\Sigma_1^{(\alpha)}\circ\cdots\circ\Sigma_m^{(\alpha)})^{\frac{1}{2}}\le r(\Sigma_1^{(\alpha)}\circ\cdots\circ\Sigma_{\frac{m}{2}}^{(\alpha)})$$
$$=r((\Psi_{\tau(1)}^*\Psi_{\tau(2)})^{(\alpha)}\circ(\Psi_{\tau(3)}^*\Psi_{\tau(4)})^{(\alpha)}\circ\cdots\circ(\Psi_{\tau(m-1)}^*\Psi_{\tau(m)})^{(\alpha)})$$
\be
\le r(\Theta_1^{(\alpha)}\circ\Theta_2^{(\alpha)}\circ\cdots\circ\Theta_{\frac{m}{2}}^{(\alpha)})^{\frac{2}{m}}\le
r(\Psi_{\tau(1)}^*\Psi_{\tau(2)}\Psi_{\tau(3)}^*\Psi_{\tau(4)}\cdots\Psi_{\tau(m-1)}^*\Psi_{\tau(m)})^{\alpha}
\label{zivalska_ess2}
\ee
\end{theorem}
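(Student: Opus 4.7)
The plan is to decompose the displayed chain into five steps, each of which follows from a result already established in the paper together with one mild structural observation about adjoints on $l^2$.

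Since $\alpha\ge 2/m\ge 1/m$, the first inequality is an immediate instance of the first inequality of Theorem \ref{zivalska_ess}(ii) and requires no additional work.

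For the second inequality I would set $\Phi:=\Sigma_1^{(\alpha)}\circ\cdots\circ\Sigma_{m/2}^{(\alpha)}$, which is a well-defined bounded set of nonnegative matrices on $l^2$ precisely because $(m/2)\alpha\ge 1$. On $l^2$ the adjoint of a nonnegative matrix is its transpose, and transposition commutes with Hadamard powers and Hadamard products, so one has the set identity $\Phi^{*}=(\Sigma_1^{*})^{(\alpha)}\circ\cdots\circ(\Sigma_{m/2}^{*})^{(\alpha)}$. Combining this with the defining relations $\Sigma_{m/2+j}=\Sigma_j^{*}$ for $j=1,\ldots,m/2$ and the commutativity of $\circ$ yields
\[
\Sigma_1^{(\alpha)}\circ\cdots\circ\Sigma_m^{(\alpha)}=\Phi\circ\Phi^{*}
\]
as sets. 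Applying Proposition \ref{dog} to the single set $\Phi$ with exponent $1$ (so the hypothesis $1\ge 1/2$ is met) gives
\[
r(\Phi\circ\Phi^{*})\le r(\Phi\circ\Phi)\le r(\Phi)^{2},
\]
and taking a square root produces the second inequality.

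The middle equality is just the definition of the $\Sigma_j$. The remaining two inequalities are then read off directly from the chain (\ref{ineqx}) of Theorem \ref{kathyth2_ess} applied to the $m/2$ bounded sets $\Sigma_1,\ldots,\Sigma_{m/2}$: the threshold $\alpha\ge 1/(m/2)=2/m$ is exactly what is assumed, the cyclic products denoted $\Phi_j$ in that theorem are precisely our $\Theta_i$, and the terminal member of (\ref{ineqx}) is
\[
r(\Sigma_1\cdots\Sigma_{m/2})^{\alpha}=r(\Psi_{\tau(1)}^{*}\Psi_{\tau(2)}\Psi_{\tau(3)}^{*}\Psi_{\tau(4)}\cdots\Psi_{\tau(m-1)}^{*}\Psi_{\tau(m)})^{\alpha},
\]
which closes the chain. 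The only genuine bit of bookkeeping is the set identity $\Sigma_1^{(\alpha)}\circ\cdots\circ\Sigma_m^{(\alpha)}=\Phi\circ\Phi^{*}$, and I expect this to be the main but mild obstacle; every other link in the chain is an immediate instance of Theorem \ref{zivalska_ess}, Proposition \ref{dog}, or Theorem \ref{kathyth2_ess}.
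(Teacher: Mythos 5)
Your proposal is correct, and every link in your chain is backed by a result that the paper states before Theorem \ref{zivalska_ess2}. The paper itself omits the proof, saying only that it follows from (\ref{tool_ess}), \cite[Theorem 3.2(i)]{BP22b} and Theorem \ref{finally_ess} in the manner of \cite{B23+}; your argument is a concrete instantiation of that outline. In particular: the first inequality is verbatim the first inequality of (\ref{ziv2}) (which does not involve $\nu$, so its absence from the present statement is harmless); the set identity $\Sigma_1^{(\alpha)}\circ\cdots\circ\Sigma_m^{(\alpha)}=\Phi\circ\Phi^*$ with $\Phi=\Sigma_1^{(\alpha)}\circ\cdots\circ\Sigma_{m/2}^{(\alpha)}$ is valid because the factors in a weighted Hadamard mean of sets are chosen independently, $\Sigma_{m/2+j}=\Sigma_j^*$ as sets, and transposition commutes with Hadamard powers and products of nonnegative matrices on $l^2$; and the threshold $\alpha\ge 2/m$ is exactly what is needed to apply (\ref{ineqx}) to the $m/2$ sets $\Sigma_1,\dots,\Sigma_{m/2}$, whose cyclic products are the $\Theta_i$. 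The one place where your route is arguably heavier than necessary is the middle step: you invoke Proposition \ref{dog} with exponent $1$, of which only the outer inequality $r(\Phi\circ\Phi^*)\le r(\Phi)^2$ is used, and that outer inequality already follows from (\ref{gsh_ref_ess}) with $\alpha_1=\alpha_2=1$ together with $r(\Phi^*)=r(\Phi)$; this matches the paper's stated reliance on Theorem \ref{finally_ess} more directly, but both derivations are sound.
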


\medskip

\begin{theorem}
\label{zivalska_ess3}
	Let $\Psi_1, \ldots , \Psi_m$ be bounded sets of positive kernel operators on $L^2(X, \mu)$ and $\tau, \nu\in S_m$. Denote  $\Omega_j=\Psi_{\tau(j)}^*\Psi_{\nu(j)}\cdots\Psi_{\tau(m)}^*\Psi_{\nu(m)}
	\cdots\Psi_{\tau(j-1)}^*\Psi_{\nu(j-1)}$ for $j=1, \ldots , m$. Let $r\in\{\rho_{ess}, \hat{\rho}_{ess}\}$.

\noindent  (i) Then
$$\gamma(\Psi_1^{(\frac{1}{m})}\circ\cdots\circ\Psi_m^{(\frac{1}{m})})\le r((\Psi_{\tau(1)}^*\Psi_{\nu(1)})^{(\frac{1}{m})}\circ\cdots\circ(\Psi_{\tau(m)}^*\Psi_{\nu(m)})^{(\frac{1}{m})})^{\frac{1}{2}}$$
\be
\le r((\Omega_1)^{(\frac{1}{m})}\circ\cdots\circ(\Omega_m)^{(\frac{1}{m})})^{\frac{1}{2m}}\le r(\Psi_{\tau(1)}^*\Psi_{\nu(1)}\cdots\Psi_{\tau(m)}^*\Psi_{\nu(m)})^{\frac{1}{2m}}.
\label{ziv3}
\ee
\noindent (ii) If $\Psi_1, \ldots , \Psi_m$ are bounded sets of nonnegative matrices that define operators on $l^2$ and if $\alpha\ge\frac{1}{m}$, then
$$\gamma(\Psi_1^{(\alpha)}\circ\cdots\circ\Psi_m^{(\alpha)})\le r((\Psi_{\tau(1)}^*\Psi_{\nu(1)})^{(\alpha)}\circ\cdots\circ(\Psi_{\tau(m)}^*\Psi_{\nu(m)})^{(\alpha)})^{\frac{1}{2}}$$
\be
\le r(\Omega_1^{(\alpha)}\circ\cdots\circ\Omega_m^{(\alpha)})^{\frac{1}{2m}}\le r(\Psi_{\tau(1)}^*\Psi_{\nu(1)}\cdots\Psi_{\tau(m)}^*\Psi_{\nu(m)})^{\frac{\alpha}{2}}
\label{ziv4}
\ee 
\end{theorem}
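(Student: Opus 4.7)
The plan is to mimic the strategy used in the proofs of Theorems \ref{first_ess}, \ref{thkate_ess}, \ref{kety_ess}, and \ref{kate_ess2}: use (\ref{tool_ess}) to translate the Hausdorff measure of noncompactness of the Hadamard geometric mean set $\Sigma$ into $r(\Sigma^*\Sigma)$, dominate a generic element of $\Sigma^*\Sigma$ element-by-element by a Hadamard weighted geometric mean via (\ref{norm2}), then cyclically rearrange to reveal the products $\Omega_j$.

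For the first inequality of (i), set $\Sigma=\Psi_1^{(1/m)}\circ\cdots\circ\Psi_m^{(1/m)}$. A generic element of $\Sigma^*\Sigma$ has the form $A^*B$ with $A=A_1^{(1/m)}\circ\cdots\circ A_m^{(1/m)}$ and $B=B_1^{(1/m)}\circ\cdots\circ B_m^{(1/m)}$, where $A_j,B_j\in\Psi_j$. Using the commutativity of the Hadamard product, I would rewrite $A^*=(A_{\tau(1)}^*)^{(1/m)}\circ\cdots\circ(A_{\tau(m)}^*)^{(1/m)}$ and $B=B_{\nu(1)}^{(1/m)}\circ\cdots\circ B_{\nu(m)}^{(1/m)}$, then apply (\ref{norm2}) of Theorem \ref{thbegin}(i) to dominate $A^*B$ by $(A_{\tau(1)}^*B_{\nu(1)})^{(1/m)}\circ\cdots\circ(A_{\tau(m)}^*B_{\nu(m)})^{(1/m)}$, which lies in $(\Psi_{\tau(1)}^*\Psi_{\nu(1)})^{(1/m)}\circ\cdots\circ(\Psi_{\tau(m)}^*\Psi_{\nu(m)})^{(1/m)}$. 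Identity (\ref{tool_ess}) together with monotonicity of $r$ under domination of positive kernels then gives the first inequality.

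For the second inequality, abbreviate $\Sigma_j=\Psi_{\tau(j)}^*\Psi_{\nu(j)}$ and $T=\Sigma_1^{(1/m)}\circ\cdots\circ\Sigma_m^{(1/m)}$. By (\ref{again}) we have $r(T)=r(T^m)^{1/m}$, and an arbitrary element of $T^m$ is a product of $m$ Hadamard geometric means in which the $i$th factor has $a_{ij}\in\Sigma_j$ in slot $j$. Hadamard commutativity lets me rearrange the $i$th factor by the cyclic shift $\pi_i(k)=((k+i-2)\bmod m)+1$; then, across the $m$ consecutive factors, slot $k$ visits the indices $k,k+1,\ldots,m,1,\ldots,k-1$, which matches the defining pattern of $\Omega_k$. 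A second application of (\ref{norm2}) dominates this rearranged product by an element of $\Omega_1^{(1/m)}\circ\cdots\circ\Omega_m^{(1/m)}$, so by monotonicity $r(T^m)\le r(\Omega_1^{(1/m)}\circ\cdots\circ\Omega_m^{(1/m)})$, and the second inequality follows after taking $m$th and square roots. The third inequality comes from (\ref{gsh_ref_ess}) together with the cyclic invariance (\ref{again}), which forces $r(\Omega_j)=r(\Omega_1)=r(\Psi_{\tau(1)}^*\Psi_{\nu(1)}\cdots\Psi_{\tau(m)}^*\Psi_{\nu(m)})$ for every $j$, so the product $r(\Omega_1)^{1/m}\cdots r(\Omega_m)^{1/m}$ collapses to $r(\Omega_1)$.

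Part (ii) proceeds identically, with $1/m$ replaced by $\alpha\ge 1/m$ (so that the total Hadamard weight is $m\alpha\ge 1$), and with Theorems \ref{thbegin}(ii) and \ref{finally_ess} playing the roles of Theorem \ref{thbegin}(i) and (\ref{gsh_ref_ess}) above. The main obstacle I anticipate is not analytic but combinatorial: one must choose the rearranging permutations $\pi_i$ coherently across all $m$ factors of $T^m$ so that slot $k$ of the dominating Hadamard geometric mean really does reproduce the cyclically shifted product defining $\Omega_k$. Once this alignment is verified, the chain is a routine composition of (\ref{tool_ess}), (\ref{again}), (\ref{norm2}), and (\ref{gsh_ref_ess}), with no further technical input needed.
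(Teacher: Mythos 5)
Your proposal is correct and follows exactly the route the paper indicates for this theorem (the paper omits the details, stating only that the result follows from (\ref{tool_ess}), \cite[Theorem 3.2(i)]{BP22b} and Theorem \ref{finally_ess} in the same way as Theorem \ref{first_ess}): convert $\gamma$ to $r(\Sigma^*\Sigma)^{1/2}$ via (\ref{tool_ess}), dominate elements of products by Hadamard geometric means via (\ref{norm2}) after commuting the Hadamard factors by $\tau$ and $\nu$, and then use the cyclic-shift alignment together with (\ref{gsh_ref_ess}) and (\ref{again}) to collapse $r(\Omega_1)^{1/m}\cdots r(\Omega_m)^{1/m}$ to $r(\Omega_1)$. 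Your combinatorial alignment $\pi_i(k)=((k+i-2)\bmod m)+1$ is precisely the choice that makes slot $k$ reproduce $\Omega_k$, so no gap remains.
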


\begin{corollary}
Let $m$ be odd and let $\Psi_1, \ldots , \Psi_m$ be bounded sets of positive kernel operators on $L^2(X, \mu)$. Let $\Omega_j$ for $j=1, \ldots, m$ be as in Theorem \ref{zivalska_ess3} and let $r\in\{\rho_{ess}, \hat{\rho}_{ess}\}$.

\noindent  (i) Then
$$\gamma(\Psi_1^{(\frac{1}{m})}\circ\cdots\circ\Psi_m^{(\frac{1}{m})})$$
$$\le r((\Psi_1^*\Psi_2)^{(\frac{1}{m})}\circ\cdots\circ(\Psi_{m-2}^*\Psi_{m-1})^{(\frac{1}{m})}\circ(\Psi_m^*\Psi_1)^{(\frac{1}{m})}\circ(\Psi_2^*\Psi_3)^{(\frac{1}{m})}\circ\cdots\circ$$
$$(\Psi_{m-1}^*\Psi_{m})^{(\frac{1}{m})})^{\frac{1}{2}}\le r(\Omega_1^{(\frac{1}{m})}\circ\cdots\circ\Omega_m^{(\frac{1}{m})})^{\frac{1}{2m}}$$
$$\le r(\Psi_1^*\Psi_2\cdots\Psi_{m-2}^*\Psi_{m-1}\Psi_m^*\Psi_1\Psi_2^*\Psi_3\cdots\Psi_{m-1}^*\Psi_{m})^{\frac{1}{2m}}$$
\be
\nonumber
=r(\Psi_1\Psi_2^*\Psi_3\cdots\Psi_{m-1}^*\Psi_{m}\Psi_1^*\Psi_2\cdots\Psi_{m-2}^*\Psi_{m-1}\Psi_m^*)^{\frac{1}{2m}}.
\label{sonce}
\ee

\noindent (ii) If $\Psi_1, \ldots , \Psi_m$ are nonnegative matrices that define operators on $l^2(R)$ and if $\alpha\ge\frac{1}{m}$, then
$$\gamma(\Psi_1^{(\alpha)}\circ\cdots\circ\Psi_m^{(\alpha)})$$
$$\le r((\Psi_1^*\Psi_2)^{(\alpha)}\circ\cdots\circ(\Psi_{m-2}^*\Psi_{m-1})^{(\alpha)}\circ(\Psi_m^*\Psi_1)^{(\alpha)}\circ(\Psi_2^*\Psi_3)^{(\alpha)}\circ$$
$$\cdots\circ(\Psi_{m-1}^*\Psi_{m})^{(\alpha)})^{\frac{1}{2}} \le r(\Omega_1^{(\alpha)}\circ\cdots\circ\Omega_m^{(\alpha)})^{\frac{1}{2m}}$$
$$\le r(\Psi_1^*\Psi_2\cdots\Psi_{m-2}^*\Psi_{m-1}\Psi_m^*\Psi_1\Psi_2^*\Psi_3\cdots\Psi_{m-1}^*\Psi_{m})^{\frac{\alpha}{2}}$$
\be
\nonumber
=r(\Psi_1\Psi_2^*\Psi_3\cdots\Psi_{m-1}^*\Psi_{m}\Psi_1^*\Psi_2\cdots\Psi_{m-2}^*\Psi_{m-1}\Psi_m^*)^{\frac{\alpha}{2}}.
\label{veter}
\ee 
\end{corollary}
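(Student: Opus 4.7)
The plan is to obtain both parts of the corollary as a direct specialization of Theorem \ref{zivalska_ess3} to a carefully chosen pair of permutations $\tau,\nu\in S_m$, followed by a single cyclic rearrangement that identifies the two products appearing in the final equality.

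Since $m$ is odd, $\gcd(2,m)=1$, so the maps defined by $\tau(j)\equiv 2j-1\pmod m$ and $\nu(j)\equiv 2j\pmod m$ (with representatives in $\{1,\ldots,m\}$) are bijections of $\{1,\ldots,m\}$, hence elements of $S_m$. The first step is to verify by a short case analysis, splitting into $1\le j\le (m+1)/2$ and $(m+1)/2<j\le m$ to resolve the modular reductions, that the resulting sequence of pairs $\Psi_{\tau(j)}^*\Psi_{\nu(j)}$, $j=1,\ldots,m$, is precisely
$$\Psi_1^*\Psi_2,\;\Psi_3^*\Psi_4,\;\ldots,\;\Psi_{m-2}^*\Psi_{m-1},\;\Psi_m^*\Psi_1,\;\Psi_2^*\Psi_3,\;\ldots,\;\Psi_{m-1}^*\Psi_m,$$
which is exactly the sequence of factors appearing inside the middle Hadamard geometric mean in the corollary. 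Granting this identification, the first three inequalities of part (i) follow immediately from Theorem \ref{zivalska_ess3}(i) applied with these $\tau,\nu$ (the sets $\Omega_j$ in the corollary being inherited from the definition in Theorem \ref{zivalska_ess3}), and the first three inequalities of part (ii) follow analogously from Theorem \ref{zivalska_ess3}(ii).

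For the final equality between the two spectral radii I plan to use only the cyclic invariance $r(XY)=r(YX)$ recorded in (\ref{again}). Write
$$F=\Psi_1^*\Psi_2\Psi_3^*\Psi_4\cdots\Psi_{m-2}^*\Psi_{m-1}\Psi_m^*\Psi_1\Psi_2^*\Psi_3\cdots\Psi_{m-1}^*\Psi_m$$
for the product of $2m$ operators appearing on the second-to-last line, and set $A=\Psi_1^*\Psi_2\Psi_3^*\Psi_4\cdots\Psi_{m-1}\Psi_m^*$ (the first $m$ factors of $F$) and $B=\Psi_1\Psi_2^*\Psi_3\Psi_4^*\cdots\Psi_{m-1}^*\Psi_m$ (the remaining $m$ factors). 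Since $m$ is odd, $A$ ends with a starred factor while $B$ ends with an unstarred one, and the product on the final line of the corollary equals exactly $BA$. Hence $r(F)=r(AB)=r(BA)$, and taking the $2m$-th root yields the claimed equality.

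The only real obstacle is bookkeeping: confirming that $(\tau(j),\nu(j))$ enumerates the pairs in the claimed order as $j$ runs through $1,\ldots,m$, and that the split of $F$ into its first and last $m$ factors matches the two halves of the rearranged product. Both are routine modular arithmetic and contain no new idea beyond what is already packaged in Theorem \ref{zivalska_ess3}.
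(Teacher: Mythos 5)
Your proof is correct and follows exactly the route the paper intends: the corollary is obtained by specializing Theorem \ref{zivalska_ess3} to the permutations $\tau(j)\equiv 2j-1$, $\nu(j)\equiv 2j \pmod m$ (bijections precisely because $m$ is odd), and the final equality is the cyclic invariance $r(AB)=r(BA)$ from (\ref{again}) applied to the two $m$-factor halves of the $2m$-factor product. The paper omits the details, but your bookkeeping of the pairs $(\tau(j),\nu(j))$ and of the split of the long product checks out.
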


\bigskip

\noindent {\bf Acknowledgements.}
The authors acknowledge  mobility support by the Slovenian Research and Innovation Agency (Slovenia-USA bilateral project BI-US/22-24-046).

 The second author acknowledges a partial support of  the Slovenian Research and Innovation Agency (grants P1-0222 
 and J2-2512).

\bibliographystyle{amsplain}

\end{document}